\newtheorem{definition}{Definition}[section]
\newtheorem{thm}[definition]{Theorem}
\newtheorem{lemm}[definition]{Lemma}
\newtheorem{prop}[definition]{Proposition}
\newtheorem{adde}[definition]{Addendum}
\newtheorem{claim}[definition]{Claim}
\newtheorem{bem}[definition]{Remark}
\numberwithin{equation}{section}
\def\inte{\mathrm{Int}}
\title{\textsc{Denjoy constructions for fibered homeomorphisms of the torus}}
\author{Fran\c cois B\'eguin, Sylvain Crovisier,
  Tobias J\"ager and Fr\'ed\'eric Le Roux}
\newcommand{\Gammatil}{\ensuremath{\tilde{\Gamma}}}
\newcommand{\ldot}{\ensuremath{\textbf{.}}}
\newcommand{\ld}{\ensuremath{,\ldots,}}
\newcommand{\ssq}{\ensuremath{\subseteq}}
\def\smin{\ensuremath{\setminus}}
\newcommand{\ra}{\ensuremath{\rightarrow}}
\newcommand{\N}{\ensuremath{\mathbb{N}}} 
\newcommand{\R}{\ensuremath{\mathbb{R}}}
\newcommand{\Z}{\ensuremath{\mathbb{Z}}}
\newcommand{\Q}{\ensuremath{\mathbb{Q}}}
\newcommand{\kreis}{\ensuremath{\mathbb{T}^{1}}}
\newcommand{\alphlist}{\begin{list}{(\alph{enumi})}{\usecounter{enumi}}}
\newcommand{\romanlist}{\begin{list}{(\roman{enumi})}{\usecounter{enumi}}}
\newcommand{\listend}{\end{list}}
\newcommand{\thx}{\ensuremath{(\theta,x)}}
\newcommand{\thom}{\ensuremath{\theta + \omega}}
\newcommand{\ntorus}[1][2]{\ensuremath{\mathbb{T}^{#1}}}
\newcommand{\NN}{\mathbb{N}}
\newcommand{\PP}{\mathbb{P}}
\newcommand{\RR}{\mathbb{R}}
\newcommand{\TT}{\mathbb{T}}
\newcommand{\ZZ}{\mathbb{Z}}
\newcommand{\Leb}{\ensuremath{\mathrm{Leb}}}
\def\De{{\Delta}}
\def\adhe{\mathrm{Adh}}
\begin{document}
\maketitle

\sloppy

\abstract{We construct different types of quasiperiodically forced circle
homeomorphisms with transitive but non-minimal dynamics. Concerning the recent Poincar\'e-like classification for this class of maps
of~\cite{jaeger/stark:2006}, we demonstrate that transitive but non-minimal behaviour can occur in each of the different cases. This closes one of the last gaps in the topological classification. 
 
 Actually, we are able to get some transitive quasiperiodically forced circle homeomorphisms with rather complicated minimal sets. For example, we show that, in some of the examples we construct, the unique minimal set is a Cantor set and its intersection with each vertical fibre is uncountable and nowhere dense (but may contain isolated points). 
 
We also prove that minimal sets of the later kind cannot occur when the dynamics are given by the projective action of a quasiperiodic $\mbox{SL}(2,\R)$-cocycle. More precisely, we show that, for a quasiperiodic $\mbox{SL}(2,\R)$-cocycle, any minimal strict subset of the torus either is a union of finitely many continuous curves, or contains at most two points on generic fibres.}

\bigskip

\section{Introduction}
We study homeomorphisms of the two-torus which are isotopic to the identity, and of the form
\begin{equation}
\begin{array}{cccccc} f & : & \ntorus \ra \ntorus & , & \thx \mapsto
  (\thom,f_\theta(x)) \ , \end{array} 
\end{equation}
with $\omega \in \R\smin\Q$. 
Such homeomorphisms are often called \emph{quasi-periodically forced (qpf) circle homeomorphisms}, their class  will be denoted  by ${\cal F}$.

Skew products like this occur in various situations in physics. One well-known
example is the so-called Harper map, which appears in the study of
quasi-crystals and the corresponding Schr\"odinger operators (see, for
example, \cite{aubry/andre:1980,haro/puig:2006}). Another one is the qpf
Arnold circle map, which is used as a simple model for oscillators forced with
two or more incommensurate frequencies \cite{ding/grebogi/ott:1989}.  

\bigskip

The interest in transitive but non-minimal dynamics in this kind of maps is
motivated by a recent classification result in \cite{jaeger/stark:2006}, which
we briefly want to discuss in order to motivate the problem.

Given any lift $F:\kreis \times\R \to \kreis \times \R$, $\theta\in\kreis$ and
$n\in\N$, let $$F^n_\theta := F_{\theta+(n-1)\omega} \circ \ldots \circ
F_\theta\ .$$ Then the limit
\begin{equation} \label{e.rotnum}
  \rho(F) \ = \ \lim_{n\to\infty} (F_\theta^n(x)-x)/n
\end{equation}
exists and does not depend on $\thx \in \kreis \times \R$. Furthermore, the
convergence in (\ref{e.rotnum}) is uniform \cite{herman:1983}. The angle $\rho(f) = \rho(F) \bmod 1$ is called the {\em (fibered) rotation number} of $f$. However, unlike the
one-dimensional case, the {\em deviations from the average rotation}, given
by 
\begin{equation}
  \label{e.deviations}
D_n\thx \ := \ F_\theta^n(x)-x-n\rho(F) \ ,
\end{equation}
need not be uniformly bounded in $n,\theta,x$ anymore.%
\footnote{In the case of an unforced circle homeomorphism, the uniform bound
  is 1.} 
This gives rise to a basic dichotomy: a homeomorphism $f\in{\cal F}$ is called {\em
  $\rho$-bounded} if $\sup_{n,\theta,x}|D_n\thx| < \infty$ and {\em
  $\rho$-unbounded} otherwise.

Another concept which turned out to be fundamental in this context are
\emph{\mbox{($p,q$)-invariant} strips}. These are compact invariant sets which
intersect each vertical fibre $\{\theta\} \times\kreis$ in
exactly $pq$ compact intervals, have an internal $p$-periodic structure and
certain additional regularity properties. Since the precise formulation is
somewhat technical and we will not use it later, we refrain from stating it
here and refer to \cite{jaeger/stark:2006} or \cite{jaeger/keller:2006} for
the definition.  Among \mbox{($p,q$)-invariant} strips are the
\emph{continuous \mbox{($p,q$)-invariant} graphs}.  These are the minimal
invariant subsets of $\TT^2$ on which  the restriction of $p_{1}: (\theta,x)
\mapsto \theta$ is a $pq$-fold covering with $p$ connected components. In
order to have a rough idea, the reader should just think of an invariant strip
as a generalisation of a continuous \mbox{($p,q$)-invariant} graph, where each
point of the graph is possibly replaced by a vertical segment.  In particular,
the existence of such an object forces the rotation number $\rho(f)$ to be
rationally related to $\omega$ and the deviations (\ref{e.deviations}) to be
bounded \cite{jaeger/keller:2006}.

It turns out that in the $\rho$-bounded case a direct analogue to the
Poincar\'e Classification Theorem (e.g.\ \cite{katok/hasselblatt:1997}) holds,
with invariant strips playing the role of periodic orbits in the unforced
case:

\begin{thm}[theorems 3.1 and 4.1 in \cite{jaeger/stark:2006}]
  \label{thm:poincare}~
  \alphlist
  \item If $f\in {\cal F}$ is $\rho$-bounded, then either there exists a
  $(p,q)$-invariant strip and $\rho(f)$, $\omega$ and 1 are rationally
  dependent or $f$ is semi-conjugate to the irrational torus
  translation $\thx \mapsto (\thom,x+\rho(f))$ by a continuous semi-conjugacy $h$
  which is fibre-respecting (i.e.\ $p_1\circ h = p_1$).
  \item If $f\in {\cal F}$ is $\rho$-unbounded, then it is topologically
 transitive.  \listend 
 \end{thm}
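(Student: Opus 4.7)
The strategy for both parts is a fibered Poincar\'e construction, in which the $(p,q)$-invariant strip plays the organizing role. Throughout I rely on the already-cited fact from \cite{jaeger/keller:2006} that the existence of an invariant strip forces $f$ to be $\rho$-bounded.

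For part (a), I would lift $f$ to a map $F:\kreis\times\R\to\kreis\times\R$ and use the hypothesis $\sup|D_n|<\infty$ to define
\[
\Phi(\theta, x) \ := \ \sup_{n \in \Z}\bigl(F_\theta^n(x) - n\rho(F)\bigr),
\]
which is finite by hypothesis, satisfies $\Phi\circ F=\Phi+\rho(F)$ and $\Phi(\theta,x+1)=\Phi(\theta,x)+1$, and is monotone in $x$ because each $F_\theta$ is an orientation-preserving circle lift. After an upper semi-continuous regularization in $x$, it descends to a continuous, fiber-respecting map $h:\T^2\to\T^2$ that intertwines $f$ with the translation $\thx\mapsto(\thom, x+\rho(f))$. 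The usual dichotomy for monotone circle semi-conjugacies now applies fiberwise: either $h_\theta$ is surjective for every $\theta$, giving the semi-conjugacy to the irrational translation, or some fibers of $h$ collapse nondegenerate intervals (``plateaus'') to points. In the second case I would show that the union of plateaus over $\theta$ is a closed $f$-invariant subset of $\T^2$ having the fiber structure of a $(p,q)$-invariant strip; the rational dependence of $\rho(f)$, $\omega$ and $1$ then drops out of the periodicity of the way $f$ permutes the plateaus.

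For part (b) I would argue by contrapositive: assume $f$ is not topologically transitive, and produce an invariant strip, which by \cite{jaeger/keller:2006} forces $f$ to be $\rho$-bounded, contradicting the hypothesis. Non-transitivity yields a non-empty $f$-invariant open set $W\subsetneq\T^2$, and $f$ acts by a permutation on the set $\cU$ of connected components of $W$. The key step is to exhibit a periodic component: $U\in\cU$ and $p\geq 1$ with $f^p(U)=U$. I expect this to follow from a compactness argument exploiting the cyclic vertical order on fibers: a ``lowest'' fiber-component at some fiber must return under some $f^p$, since otherwise its $f$-orbit would produce infinitely many pairwise disjoint open sets with a common positive lower bound on their vertical extent, which cannot fit in $\T^2$. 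From such a periodic component, the boundary of $\bigcup_{i=0}^{p-1}f^i(U)$ is then assembled into the desired $(p,q)$-invariant strip.

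The main obstacle in both parts is to match the resulting set, the plateaus of $h$ in (a) and the $f$-orbit of the periodic component in (b), to the precise technical definition of a $(p,q)$-invariant strip from~\cite{jaeger/stark:2006}: one must show that the number of intervals per fiber is constantly $pq$, that the $f$-action has the required internal $p$-periodicity, and that the boundary has the prescribed regularity. These are semi-continuity and monotonicity arguments by nature, but they are delicate at the exceptional fibers where plateaus or components may pinch off. Once this verification is in place, the integer $p$ produced by the cyclic structure, together with the irrationality of $\omega$, yields the arithmetic relation between $\rho(f)$, $\omega$ and $1$ asserted in part (a).
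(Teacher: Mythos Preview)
This theorem is not proved in the present paper at all: it is quoted verbatim from \cite{jaeger/stark:2006} (as ``theorems~3.1 and~4.1'') and used only as background for the classification problem that motivates the Denjoy-type constructions. There is therefore no ``paper's own proof'' to compare your proposal against.

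That said, a few remarks on the sketch itself. In part~(a), defining $\Phi(\theta,x)=\sup_n(F_\theta^n(x)-n\rho(F))$ is indeed the natural fibered analogue of the Poincar\'e construction, but the passage from this upper semicontinuous function to a \emph{continuous} fibre-respecting map $h$ on $\ntorus$ is not automatic: one needs continuity in $\theta$, not just in $x$, and this is where the uniform bound on the deviations is used in an essential way. Also, your logic in the plateau case is inverted: in the actual argument the rational dependence of $\rho(f),\omega,1$ is the \emph{input} that forces the target rotation to be non-minimal, not a consequence extracted from a strip you have already found; when the three numbers are rationally independent the target rotation is minimal and $h$ is automatically surjective. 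In part~(b), the step ``non-transitivity $\Rightarrow$ invariant strip'' is the right contrapositive, but your mechanism for producing a periodic connected component (``a lowest fibre-component must return'') is too vague to be convincing as stated; the genuine difficulty, as you acknowledge, is matching whatever invariant set you produce to the precise definition of a $(p,q)$-invariant strip, and this is exactly the technical content of \cite{jaeger/stark:2006}.
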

 
\bigskip
 
Since all known examples of $\rho$-unbounded
behaviour are either minimal (all $\rho$-unbounded skew rotations are
 minimal, see \cite[proposition 4.2.6]{katok/hasselblatt:1997}; other examples
 are given in \cite{bjerkloev:2005a}) or their topological dynamics have not
 yet been clarified, this immediately raises the question whether transitive
 but non-minimal dynamics can occur in the $\rho$-unbounded case. Similarly,
 it is not known whether this is possible when $f$ is semi-conjugate to an
 irrational rotation - which could be interpreted as ``a Denjoy
 counter-example without wandering sets''.%
\footnote{In this context, we would also like to mention the constructions by
Mary Rees in \cite{rees:1979} and \cite{rees:1981} (see also
\cite{becroler:2006}).  These equally produce Denjoy-like examples without
wandering sets on the two-torus, but in the fibered case the dynamics will
always remain minimal.}

The positive answer to these questions is provided by the following theorem,
which is the main result of this paper:
\begin{thm}~

\begin{itemize}
\item There exists a transitive non minimal  qpf circle homeomorphism which is 
$\rho$-unbounded;
\item There exists a transitive non minimal  qpf circle homeomorphism which is 
semi-conjugate to an irrational rotation.
\end{itemize}
\end{thm}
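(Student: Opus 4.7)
Both examples can be produced by a Denjoy-type blowup of a suitable minimal starting system, followed by a verification that the blowup preserves the relevant classification invariants while yielding a dense orbit outside the minimal set.

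\textbf{Semi-conjugate case.} Fix an irrational translation $T_0(\theta,x)=(\thom,x+\rho)$ on $\ntorus$ with $1,\omega,\rho$ rationally independent, so that $T_0$ is minimal. Pick one $T_0$-orbit $\{(\theta_n,x_n)\}_{n\in\ZZ}$ and a positive sequence $(\ell_n)$ with $\sum_n\ell_n<\infty$. Following the classical Denjoy template adapted to the skew-product setting, I would construct a continuous fibre-respecting degree-one surjection $h(\theta,x)=(\theta,h_\theta(x))$ of $\ntorus$ whose non-trivial fibres are closed vertical intervals $I_n$ of length $\ell_n$ centred at the orbit points, together with the map $f\in\cF$ defined by $h\circ f=T_0\circ h$. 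Set $K:=\ntorus\smin\bigcup_n\inte(I_n)$. Then $K$ is closed, $f$-invariant, and properly contained in $\ntorus$, so $f$ is non-minimal; minimality of $K$ follows from minimality of $T_0$ together with the fact that $h|_K$ is injective off the (countable) set of endpoints of the $I_n$. For transitivity, take $p\in\inte(I_0)$. Then $f^n(p)\in I_n$ lies within vertical distance $\ell_n/2\to 0$ of $(\theta_n,x_n)$, whose $T_0$-orbit is dense in $\ntorus$, so the $f$-orbit of $p$ is dense.

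\textbf{$\rho$-unbounded case.} Start with a \emph{minimal} $\rho$-unbounded $g\in\cF$ (such maps exist, e.g.\ among Bjerkl\"ov's constructions \cite{bjerkloev:2005a}) and apply exactly the same blowup procedure along one $g$-orbit to produce $f\in\cF$ with $h\circ f=g\circ h$. Non-minimality of $f$ follows as above. To see that $\rho$-unboundedness is inherited from $g$, write $G$, $F$, $H$ for the respective lifts: the identity $H\circ F=G\circ H$ together with the uniform estimate $|H(x)-x|\leq\sum_n\ell_n$ shows that $F_\theta^n(x)-x-n\rho(F)$ and $G_\theta^n(H(x))-H(x)-n\rho(G)$ differ by at most $2\sum_n\ell_n$, so unbounded deviations for $g$ pass to unbounded deviations for $f$. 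Transitivity of $f$ is then automatic by part (b) of Theorem~\ref{thm:poincare}, without any further argument.

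\textbf{Main obstacle.} The technical heart of the argument is the construction of $h$ as a genuine continuous map of $\ntorus$, not of some singular enlarged space. In the fibre direction one uses the standard Denjoy trick, but achieving continuity in $\theta$ despite inserting intervals only in the countably many specific fibres $\theta=\theta_n$ requires the blowup heights to be cut off by bump functions supported in small neighbourhoods of each $\theta_n$ and then renormalized to keep each fibre of total length one; summability of $(\ell_n)$ is what guarantees uniform convergence and hence continuity of $f$. A secondary but essential input for the first bullet is the existence of a \emph{minimal} $\rho$-unbounded starting system $g$, which is non-trivial and relies on the prior literature.
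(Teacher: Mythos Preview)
Your approach has a fundamental gap: blowing up a single point-orbit in the fibred setting does \emph{not} destroy minimality. Suppose the construction goes through and yields a continuous fibre-respecting $h$ with $h\circ f = R\circ h$ ($R$ minimal) whose only non-trivial fibres are the vertical segments $I_n \subset \{\theta_n\}\times\kreis$. If $M$ is any $f$-minimal set then $h(M)$ is closed, non-empty and $R$-invariant, hence $h(M)=\ntorus$; therefore $M$ contains every point whose $h$-preimage is a singleton. But for every $\theta\notin\{\theta_n:n\in\Z\}$ the entire fibre $\{\theta\}\times\kreis$ consists of such points, and this set of $\theta$ has countable complement, so $M$ contains a dense union of whole fibres and $M=\ntorus$. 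Equivalently, your set $K=\ntorus\smin\bigcup_n\inte(I_n)$ is not closed in $\ntorus$: the interiors are taken fibrewise, and a union of open arcs lying over countably many fibres is not open in the product. Your bump-function cutoff does not rescue this. Either the flat spots of $h_\theta$ still occur only over the countable set $\{\theta_n\}$ and the argument above applies verbatim, or you are in fact inserting intervals over whole $\theta$-neighbourhoods --- in which case you are no longer blowing up a point-orbit but (pieces of) curves, and you have specified neither which point of each nearby fibre receives the blowup nor how these pieces fit together as the dense orbit returns. The paper's footnote on the Rees construction records exactly this phenomenon: fibred Denjoy--Rees blowups of point-orbits remain minimal.

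This is precisely why the paper blows up the orbit of a \emph{curve} $\Gamma$ (the graph of a continuous $\gamma:\kreis\to\kreis$) rather than of a point. Since $\Gamma$ meets every fibre, $\pi^{-1}(\Gamma)$ is an annulus with non-empty interior, and the closure of the complement of $\pi^{-1}(\Xi)$ is a proper closed invariant set. The price is that the iterates $R^n(\Gamma)$ are no longer pairwise disjoint, and the whole of Section~\ref{section.curve} is devoted to constructing $\Gamma$ so that every intersection $\Gamma\cap R^n(\Gamma)$ is ``flat'' (a finite union of non-degenerate arcs), which is exactly the condition needed for the semiconjugacy $\pi$ to be continuous. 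Transitivity is then obtained from a separate crossing property of $\Gamma$ with its iterates (Addendum~\ref{addendum}), not from the density of a single $f$-orbit inside a blown-up interval.
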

This theorem will follow from a general construction.
\begin{thm} \label{t.mainthm}
  Suppose $R$ is a minimal qpf circle homeomorphism. Then
  there exist a continuous and surjective map $\pi : \ntorus \ra \ntorus$ and
  a qpf circle homeomorphism $f$ such that $\pi \circ f =
  R \circ \pi$ and $f$ is topologically transitive, but not minimal.  In
  addition, if $R$
  is a diffeomorphism,%
\footnote{In fact, it suffices that all fibre maps $R_\theta$ are circle
  diffeomorphisms and $\partial_x R_\theta$ depends continuously on \thx.}
 then $f$ can be chosen such that all fibre maps $f_\theta$ are circle
  diffeomorphisms and $\partial_x f_\theta$ depends continuously on $\thx$.
\end{thm}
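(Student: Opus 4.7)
The strategy is a Denjoy-type construction carried out fiberwise: I would blow up a carefully chosen countable family of orbits of $R$ into vertical arcs, and define $f$ so that it respects this blow-up. The classical one-dimensional Denjoy construction gives a semi-conjugate, non-minimal, \emph{non-transitive} map (with wandering intervals), so transitivity has to be engineered. The key idea is to blow up several orbits whose union is dense in $\TT^2$ in such a way that the "gaps" in one orbit's blown-up structure are filled by arcs from other blown-up orbits, creating dense orbits.

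Concretely, I would proceed as follows. First, choose a countable family $(P_i)_{i \in \NN}$ of points of $\TT^2$ with orbits $O_i = \{R^n(P_i)\}_{n \in \ZZ}$; since $R$ is minimal, each $O_i$ is already dense, but working with a countable family gives flexibility for transitivity. Assign to each orbit point $R^n(P_i)$ a weight $l_{i,n} > 0$ such that $\sum_{i,n} l_{i,n}$ is finite and $l_{i,n}$ decays sufficiently fast. Then construct $\pi : \TT^2 \to \TT^2$ as a fiber-preserving continuous surjection that, on each fiber $\{\theta\} \times \TT$, is a monotone degree-$1$ map of $\TT$ collapsing an arc of length $l_{i,n}$ to the point $R^n(P_i)$ whenever that orbit point lies on the fiber over $\theta$. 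Finally, define $f$ by requiring $\pi \circ f = R \circ \pi$ and choosing an orientation-preserving homeomorphism (affine, in the simplest version) between the arc attached to $R^n(P_i)$ and the one attached to $R^{n+1}(P_i)$.

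The semi-conjugacy $\pi \circ f = R \circ \pi$ holds by construction. Non-minimality follows because the closure $M$ of $\pi^{-1}\!\left(\bigcup_i O_i\right)$ is a proper closed $f$-invariant subset of $\TT^2$ (properness coming from the summability of the $l_{i,n}$, which prevents the arcs from filling any fiber). For transitivity, one must exhibit a dense $f$-orbit; the plan is to start from a base point in the complement of $M$, whose $R$-orbit is dense in $\TT^2$, and to arrange the widths $l_{i,n}$ so that under iteration the $f$-orbit hits each inserted arc at a set of heights dense in the arc. This amounts to a careful diagonal choice of the $O_i$ and of the weights $l_{i,n}$, guaranteeing that every open set of $\TT^2$ is visited.

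The main technical obstacle will be the continuity of $\pi$, and hence of $f$, on the whole of $\TT^2$: each fiber only hosts countably many inserted arcs, and the arcs on nearby fibers may be located at entirely different heights, so discontinuities in the $\theta$ direction are a priori possible. Continuity has to be obtained by exploiting the smallness of $l_{i,n}$ for large $|n|+i$, using the rapid decay to show that $\pi_\theta$ varies uniformly with $\theta$. Once continuity is in hand, the regularity addendum is obtained by replacing the piecewise-affine homeomorphisms between corresponding arcs with smooth interpolations whose one-sided derivatives at the arc endpoints match the derivative of $R_\theta$ at the collapsed point; the continuity of $\partial_x R_\theta$ in $(\theta, x)$ then transfers to $\partial_x f_\theta$.
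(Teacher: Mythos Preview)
Your approach has a genuine gap at exactly the point you flag as the main obstacle: continuity of $\pi$ in the $\theta$ direction cannot be obtained by blowing up \emph{point} orbits, no matter how fast the lengths $l_{i,n}$ decay. The orbit of each $P_i$ under $R$ meets a given fibre $\{\theta\}\times\TT^1$ in at most one point (since the base rotation is irrational), so the set of fibres on which you insert any arc at all is a countable union of countable sets, hence countable. On the residual set of remaining fibres, $\pi_\theta$ collapses nothing and is a homeomorphism. Now take $\theta_0=p_1(P_1)$: on that fibre you insert an arc of fixed length $l_{1,0}>0$, while on nearby fibres outside the countable set you insert nothing, so $\pi_{\theta}$ jumps by $l_{1,0}$ as $\theta\to\theta_0$. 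Rapid decay controls the \emph{far} iterates, but the discontinuity comes from the $n=0$ arc itself suddenly appearing. The same obstruction kills transitivity: the interiors of the inserted arcs form an $f$-invariant open set (each arc is mapped onto the next one along the orbit), so no $f$-orbit can be dense.

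The paper's remedy is to blow up the orbit of a continuous \emph{curve} $\Gamma$ (a graph over $\TT^1$) rather than of points. Then every fibre carries an inserted arc for each iterate $R^n(\Gamma)$, and the position of this arc varies continuously with $\theta$; this is what makes $\theta\mapsto\mu_\theta$, and hence $\pi$, continuous. The price is that the iterates $R^n(\Gamma)$ unavoidably intersect one another, and these intersections must be controlled: the paper requires them to be \emph{flat} (each intersection occurs over a nontrivial $\theta$-interval, never at an isolated point), which is precisely what allows a continuous lift of the fibre measures. Transitivity is then obtained not from the choice of weights but from an additional \emph{crossing} property (T) of $\Gamma$ with its iterates, built into the inductive construction of $\Gamma$: because the curves genuinely cross, the blown-up strips overlap in a way that lets $f$-orbits travel between them. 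Constructing a curve with both the flat-intersection and crossing properties is the real work of the proof.
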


This can be interpreted as follows. If $R$ is a minimal qpf circle
homeomorphism with certain additional properties, and if these properties are
preserved by topological extension, then there exists $f\in{\cal F}$ with the
same properties, but transitive and non-minimal dynamics. In particular, this
is true for the properties {\em `semi-conjugate to an irrational rotation'}
and {\em `unbounded deviations'}. Another such property, related to the
structure of the ergodic invariant measures, will be discussed in
section~\ref{ErgodicMeasures}~.  \medskip

The proof of theorem \ref{t.mainthm} will be given via propositions
\ref{prop.construction-curve} and \ref{p.blow-up-curve} below and their addenda,
which immediately imply the above statement. The construction we carry out is
very similar to Denjoy's construction of circle homeomorphisms with wandering
sets. The main idea is to start with a continuous curve $\Gamma$ and to `blow
up' this curve and all its images to small annuli, just as the points of an
orbit are blown up to wandering intervals in Denjoy's construction. However,
instead of requiring that $\Gamma$ is disjoint from all its images, which
would lead to wandering sets, we choose the curve $\Gamma$ such that there are
`many' intersections, and this fact is then used to establish the transitivity
of $f$. Further, it turns out that in order to make the construction work, the
initial curve must have another, rather surprising property: whenever it
intersects any of its images this must happen over a whole interval - in other
words the connected components of the intersection must not be singletons (see
definition~\ref{def:flatintersections} and
proposition~\ref{prop.construction-curve}). This property will turn out to be
crucial in order to ensure the continuity of the semi-conjugacy $\pi$ during
the construction.

The construction of such a curve $\Gamma$ is first carried out in the case
where $R$ is real-analytic, since this allows to avoid some technical problems
and renders the main ideas more visible.

It should be mentioned that there exist well-known examples of qpf circle
homeomorphisms with transitive but non-minimal dynamics, which are due to
Shnirelman \cite{shnirelman:1930} (see also, for example, \cite[section
12.6(b)]{katok/hasselblatt:1997}).  However, in these examples there always
exists an invariant curve, which is just a special case of an invariant
strip. Consequently, the resulting minimal set (the invariant curve) has have
a very simple structure. In contrast to this, it is known that minimal strict
subsets of \ntorus\ in the absence of invariant strips must be much more
complicated
(see proposition~\ref{prop.jeakel} below, taken from \cite[theorem 4.5 and lemma 4.6]{jaeger/keller:2006}). In
particular, we obtain the following result (see subsection~\ref{ss. examples}).

\begin{prop}
\label{p.example-Cantor}
There exists a transitive non minimal qpf circle homeomorphism whose unique
minimal set is a Cantor set, and whose intersection with each fibre
$\{\theta\} \times \kreis$ is uncountable.
\end{prop}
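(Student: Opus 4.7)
The plan is to invoke Theorem \ref{t.mainthm} with a well-chosen minimal $R$ and then deduce the structural properties of the minimal set from Proposition \ref{prop.jeakel}.

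Concretely, I would take $R$ to be any minimal qpf circle homeomorphism that is $\rho$-unbounded; such examples exist by \cite{bjerkloev:2005a}. Because the existence of a $(p,q)$-invariant strip would force the deviations (\ref{e.deviations}) to be bounded, $R$ has no invariant strip. Applying Theorem \ref{t.mainthm} yields a transitive, non-minimal $f\in{\cal F}$ and a continuous surjection $\pi:\ntorus\to\ntorus$ with $\pi\circ f = R\circ\pi$; the construction of the preceding sections produces $\pi$ in fibre-respecting form. Since $\pi$ then lifts to a map at bounded $\sup$-distance from the identity in the $x$-coordinate, the deviations of $f$ and of $R$ differ by a uniformly bounded quantity and $\rho(f)=\rho(R)$; hence $f$ is itself $\rho$-unbounded and in turn admits no invariant strip.

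Let $M$ be any minimal set of $f$. Since $\pi(M)$ is non-empty, closed and $R$-invariant, the minimality of $R$ gives $\pi(M)=\ntorus$, so $M$ projects onto every fibre; in particular $M\neq\ntorus$ because $f$ is non-minimal. As $f$ has no invariant strip, Proposition \ref{prop.jeakel} applies to $M$ and yields (in particular) that $M\cap(\{\theta\}\times\kreis)$ is uncountable on a residual set of $\theta$. Moreover $M$ has empty interior (any non-empty interior would saturate to an invariant strip) and contains no non-trivial arc on any generic fibre, so $M$ is nowhere dense and fibre-wise totally disconnected; since a minimal set of a homeomorphism without periodic orbits is perfect, $M$ is a Cantor set.

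There remains the uniqueness of the minimal set. For this I would go back to the construction proving Theorem \ref{t.mainthm}: the complement of the countable family of open ``blown-up'' annuli used there is a canonical closed $f$-invariant set $K\subset\ntorus$, the restriction $f|_K$ is minimal by design, and every $f$-orbit accumulates on $K$. Consequently $K$ is the unique minimal set of $f$ and coincides with the $M$ above. The main obstacle lies precisely in this last identification, which goes beyond the statement of Theorem \ref{t.mainthm} and relies on the geometric features of the Denjoy-type construction, in particular the flat-intersections property of the base curve $\Gamma$ alluded to in the introduction.
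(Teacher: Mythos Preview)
Your strategy of invoking Theorem~\ref{t.mainthm} as a black box and then reading off the structure of the minimal set from Proposition~\ref{prop.jeakel} does not close. There are two genuine gaps.

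First, Proposition~\ref{prop.jeakel} says nothing about the cardinality of the fibre sections $M_\theta$, nor does it exclude non-trivial vertical segments in $M$; it only says that connected components of $M$ are vertical and that $p_1(U\cap M)$ contains intervals. So neither the claim ``$M_\theta$ is uncountable on a residual set'' nor ``$M$ contains no non-trivial arc on generic fibres'' follows from it. In fact the second assertion is simply false in general: item~2 of Proposition~\ref{p.examples} in the paper constructs (by the very same Denjoy method) a transitive non-minimal $f$ with no invariant strip whose minimal set \emph{does} contain a non-trivial vertical segment. Hence total disconnectedness of $M$ cannot be obtained from Proposition~\ref{prop.jeakel} alone. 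Second, the statement requires uncountability of $M_\theta$ for \emph{every} $\theta$, not just a residual set.

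The paper's argument avoids these problems by going inside the construction rather than quoting Theorem~\ref{t.mainthm} abstractly. One takes $R$ to be an irrational torus translation, so that $\Xi_\theta$ is dense (and countable) in every fibre. From $\pi(M)=\ntorus$ one gets $M\supseteq\pi^{-1}(\Xi^c)$, hence $M_\theta$ contains the uncountable set $\pi_\theta^{-1}(\Xi_\theta^c)$ for every $\theta$. Remark~\ref{rem:partition}(b) shows that $\mathrm{Int}(\pi^{-1}(\Xi))$ meets every fibre in a dense open set, so each $M_\theta$ has empty interior; combined with Proposition~\ref{prop.jeakel}(a) this forces the components of $M$ to be points, whence $M$ is a Cantor set. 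Uniqueness of the minimal set is the general fact proved in Section~\ref{ss.uniqueness-structure} (no invariant strip, or transitivity, suffices) and does not require the ad hoc identification you sketch at the end.
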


\bigskip

Apart from the Denjoy-like constructions, we collect some general properties of minimal sets of qpf circle homeomorphism. In particular, we prove the following uniqueness result (see subsection~\ref{ss.uniqueness-structure} for more results).

\begin{prop}
Suppose $f\in{\cal F}$ has no invariant strip. Then it has a unique minimal set. 
\end{prop}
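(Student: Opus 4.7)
The plan is to argue by contradiction: suppose $f$ admits two distinct minimal sets $M_1\ne M_2$. Being minimal, they are disjoint compact $f$-invariant subsets of $\ntorus$; since the base rotation by $\omega$ is minimal, both project surjectively onto $\kreis$. Writing $M_i^\theta:=M_i\cap(\{\theta\}\times\kreis)$, the strategy is to manufacture a $(p,q)$-invariant strip out of $M_1$ and $M_2$, contradicting the standing hypothesis.

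The first ingredient is a uniform fibrewise separation: the function $\delta(\theta):=d(M_1^\theta,M_2^\theta)$ is strictly positive (by disjointness) and lower semi-continuous on the compact base $\kreis$, the latter because the compact-valued maps $\theta\mapsto M_i^\theta$ are upper semi-continuous for the Hausdorff metric. Hence $\epsilon:=\min_\theta\delta(\theta)>0$.

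From this, any connected component of $(\{\theta\}\times\kreis)\setminus M_2^\theta$ that meets $M_1^\theta$ is an open arc whose endpoints lie in $M_2^\theta$ and which contains a point of $M_1^\theta$ at distance at least $\epsilon$ from each endpoint; so it has length $\geq 2\epsilon$, and at most $\lfloor 1/(2\epsilon)\rfloor$ such components fit in a single fibre. Let $S^\theta$ be the union of the closures of these components and $S:=\bigcup_\theta S^\theta\subseteq\ntorus$. Since each $f_\theta$ is a fibre homeomorphism preserving both $M_1$ and $M_2$, it sends components of $(\{\theta\}\times\kreis)\setminus M_2^\theta$ meeting $M_1^\theta$ bijectively to components of $(\{\theta+\omega\}\times\kreis)\setminus M_2^{\theta+\omega}$ meeting $M_1^{\theta+\omega}$; hence $S$ is $f$-invariant and contains $M_1$.

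The main and most delicate task is to check that $S$ (possibly after taking its closure) satisfies the full definition of a $(p,q)$-invariant strip from \cite{jaeger/stark:2006,jaeger/keller:2006}. The integer $k_\theta:=\#\{\text{components of }S^\theta\}$ is bounded by $\lfloor 1/(2\epsilon)\rfloor$ and invariant under the irrational rotation of the base, hence almost everywhere constant by unique ergodicity of the base rotation; the $f$-action cyclically permutes these $k$ arcs in each fibre, providing the required internal periodic structure. The main obstacle is closedness of $S$ in $\ntorus$: as $\theta_n\to\theta$, arcs of $S^{\theta_n}$ may merge or split in the Hausdorff limit and points of $M_2^\theta$ may appear in the interior of a limit arc. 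The uniform length bound $\geq 2\epsilon$ prevents complete degeneration and, combined with passing to the closure $\overline S$, preserves the constant count of arcs per fibre. Once $\overline S$ is confirmed to be a closed $f$-invariant set with a constant finite number of arcs per fibre, it is an invariant strip, contradicting the hypothesis and forcing uniqueness of the minimal set.
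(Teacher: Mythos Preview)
Your argument has a genuine gap at exactly the point you yourself flag as ``the main and most delicate task'': you never verify that $\overline{S}$ is a $(p,q)$-invariant strip. You observe that $k_\theta$ is a.e.\ constant, but the definition requires the fibrewise interval-count to be constant \emph{everywhere}, together with further regularity that you do not address (the paper itself declines to state the technical definition precisely for this reason). Your assertion that the uniform length bound ``preserves the constant count of arcs per fibre'' under closure is not justified: upper semi-continuity of $\theta\mapsto M_2^\theta$ allows $M_2^{\theta_0}$ to contain extra points that split one of your arcs, or $M_1^{\theta_0}$ to meet an additional gap, so $k_{\theta_0}$ can genuinely differ from the generic value. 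The claim that the fibre maps permute the arcs \emph{cyclically} is also unsupported. In short, the proof ends with ``once $\overline{S}$ is confirmed to be \ldots'', but no confirmation is given.

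The paper's argument avoids this difficulty entirely and is quite different. It first shows that two distinct minimal sets $K,K'$ force non-transitivity: the two open $f$-invariant sets $U_1,U_2$ consisting of vertical gaps of $K\cup K'$ bounded below by $K$ and above by $K'$ (resp.\ the reverse) are disjoint and have non-empty interior at common continuity points of $\theta\mapsto K_\theta$ and $\theta\mapsto K'_\theta$. Then, since $f$ has no invariant strip and is not transitive, theorem~\ref{thm:poincare} forces a semi-conjugacy to an irrational torus translation, and Furstenberg's theorem gives unique ergodicity, hence a unique minimal set. This route never touches the technical definition of an invariant strip; it uses the already-established Poincar\'e-type classification as a black box.
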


Proposition~\ref{p.example-Cantor} shows that general qpf circle homeomorphisms may possess quite complicated minimal sets. In the particular case of quasiperiodic $\mbox{SL}(2,\RR)$-cocycles, which has received a lot of attention in the recent years (see, for example, \cite{avila/krikorian:2004} and references therein), we prove that such ``complicated" minimal sets cannot occur. More precisely, we obtain the following.

\begin{prop}[see propositions~\ref{p.twopoints},~\ref{p.sltr-onepoint},~\ref{p.bjerkloev/johnson} for more detailed statements]
\label{prop:cocycles}
Suppose $f$ is given by the projective action of a quasiperiodic $\mbox{SL}(2,\RR)$-cocycle.  Then any minimal set of $f$ 
\begin{enumerate}
\item is the whole torus,
\item or is a continuous ($p,q$)-invariant graph,
\item or intersects generic fibres in exactly one point,
\item or intersects generic fibres in exactly two points.
\end{enumerate}
If in addition $f$ is  $\rho$-unbounded, then any minimal set either is the whole torus, or intersects generic fibres in only one point.
\end{prop}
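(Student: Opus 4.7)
My plan is to combine the Möbius structure of the fibre maps with an invariant-measure analysis in the spirit of Furstenberg. Since $f$ is given by the projective action of an $\mbox{SL}(2,\R)$-cocycle, every fibre map $f_\theta$ is a Möbius transformation of $\kreis$: it is either the identity or has at most two fixed points, and the group $\mbox{PSL}(2,\R)$ acts sharply $3$-transitively on $\kreis$. This severely restricts the configurations that a forward-invariant set can exhibit on a single fibre, and underlies every rigidity statement below.

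Let $M$ be a minimal set of $f$ and let $\mu$ be an $f$-invariant ergodic probability measure supported on $M$ (obtained from Krylov--Bogolyubov). Since the base rotation $\theta\mapsto\thom$ is uniquely ergodic, $(p_1)_*\mu=\Leb_\TT$, and one may disintegrate $\mu=\int\mu_\theta\,d\theta$. The key input is then Furstenberg's structure theorem for projective $\mbox{SL}(2,\R)$-cocycles: for almost every $\theta$, the fibre measure $\mu_\theta$ is either a Dirac mass, or is equidistributed on a two-point set, or is absolutely continuous, the last case only occurring when the cocycle is measurably conjugate to a cocycle of rotations. This measure-theoretic trichotomy will be in bijection with the non-trivial cases of the proposition.

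The next step is to upgrade this almost-everywhere information to a statement about $M_\theta$ on a residual set of fibres. The map $\theta\mapsto M_\theta$ is upper semi-continuous in the Hausdorff topology, and $\mathrm{supp}(\mu_\theta)\ssq M_\theta$ for a.e.\ $\theta$. Combining this with minimality (every $M_\theta$ is a Hausdorff limit of $\mathrm{supp}(\mu_{\theta_n})$ along suitable sequences) and a Baire category argument, one obtains that $M_\theta$ has cardinality $1$ or $2$ on a residual set of $\theta$ in the atomic cases (yielding statements $3$ or $4$, and statement $2$ when the corresponding fibrewise selection is moreover continuous). In the absolutely continuous case, $\mu_\theta=\Leb_\kreis$ a.e., so $M_\theta=\kreis$ on a dense set; upper semi-continuity and minimality then force $M=\ntorus$, giving statement~$1$. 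The last case also absorbs the exceptional situation where three points accumulate on a single fibre, since the $3$-transitivity of $\mbox{PSL}(2,\R)$ then propagates this excess by the dynamics.

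For the final $\rho$-unbounded claim, Theorem~\ref{thm:poincare}(b) together with \cite{jaeger/keller:2006} rules out any $(p,q)$-invariant strip. A two-point minimal set (case~$4$) would, through its two fibrewise selections, give rise to such an invariant strip; similarly a continuous $(p,q)$-invariant graph (case~$2$) is itself a strip. Hence only cases $1$ and $3$ survive. The main obstacle I anticipate is precisely the upgrade from Furstenberg's a.e.\ trichotomy to the pointwise "generic fibre" formulation: one must exclude pathological behaviour on the exceptional null set by combining minimality, upper semi-continuity of $\theta\mapsto M_\theta$, and the rigidity of Möbius fibre maps, most likely reducing to a contradiction with Proposition~\ref{prop.jeakel} whenever the conclusion fails on a non-meager set.
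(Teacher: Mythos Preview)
Your measure-theoretic route via Furstenberg's dichotomy is genuinely different from the paper's, which is purely dynamical and never looks at invariant measures. The paper splits into two cases: (i) when $f$ has no invariant strip (Proposition~\ref{p.sltr-onepoint}), a direct contradiction argument exploits $\rho$-unboundedness to produce a single M\"obius map with four fixed points; (ii) when $f$ has invariant strips (Proposition~\ref{p.bjerkloev/johnson}), one works inside a minimal strip and uses that three converging point-triples force convergence of the ambient $\mathrm{SL}(2,\R)$-maps. In both cases the $3$-transitivity of $\mathrm{PSL}(2,\R)$ is the engine, applied to orbits and limits rather than to fibre measures.

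Your proposal has two real gaps. The first you already flag: upgrading the a.e.\ statement $\#\mathrm{supp}(\mu_\theta)\le 2$ to $\#M_\theta\le 2$ on a residual set is not automatic. Even granting $M=\mathrm{supp}(\mu)$, a continuity point $\theta_0$ of $\theta\mapsto M_\theta$ only gives $M_{\theta_0}$ as the set of \emph{all} accumulation points of the two-point sets $\mathrm{supp}(\mu_{\theta_n})$, and different subsequences may produce different limits; nothing prevents three or more accumulation points. Semi-continuity alone does not close this, and invoking Proposition~\ref{prop.jeakel} does not obviously help either. This is precisely the step where the paper's direct argument (Proposition~\ref{p.twopoints}) uses transitivity and $3$-transitivity instead.

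The second gap is more serious: your $\rho$-unbounded argument is incorrect as stated. You claim that a minimal set with $\#M_\theta=2$ generically would ``through its two fibrewise selections give rise to an invariant strip''. But the two selections are only measurable graphs, and the closure of their union is $M$ itself, which need not satisfy the definition of a $(p,q)$-invariant strip (on non-generic fibres $M_\theta$ may well have the wrong number of components). Indeed, excluding case~4 in the $\rho$-unbounded situation is exactly the content of Proposition~\ref{p.sltr-onepoint}, and the paper's proof is a nontrivial argument: one uses unbounded deviations to find a return map $f^n_{\theta_1}$ mapping two short arcs onto two long arcs in a staggered way, forcing a M\"obius transformation with four fixed points. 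Your shortcut bypasses this and does not work.
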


This proposition seems to improve some recent results of Bjerkl\"ov and Johnson (\cite{bjerkloev/johnson:2007}), by showing that one of the five possible cases of the classification obtained by these author never occurs. 

\bigskip

Finally, we want to mention another result which had originally been a
motivation for the presented work.
\begin{thm}[{\cite[theorem 4.4]{jaeger/keller:2006}}]\label{t.denjoy}
Suppose $f\in{\cal F}$ is ${\cal C}^2$ and has no invariant strips. Then $f$
is topologically transitive.
\end{thm}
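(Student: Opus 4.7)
My plan is to combine the Poincar\'e-like classification of Theorem~\ref{thm:poincare} with a Denjoy--Koksma type argument adapted to the fibred setting. I begin by using the dichotomy of that theorem: if $f$ is $\rho$-unbounded, part~(b) already gives topological transitivity, so there is nothing more to do. Hence I may assume $f$ is $\rho$-bounded. Since $f$ has no $(p,q)$-invariant strip by hypothesis, part~(a) produces a continuous fibre-respecting semi-conjugacy $h\colon \ntorus \to \ntorus$, written $h(\theta,x) = (\theta, h_\theta(x))$, from $f$ to the minimal irrational torus translation $T_\rho\colon \thx \mapsto (\thom, x+\rho(f))$. Each $h_\theta$ is a continuous weakly increasing degree-one map of the circle, and the ``plateaux'' $P_y := h^{-1}(y)$ for $y\in\ntorus$ encode its failure to be a homeomorphism: they are single points when $h_\theta$ is locally injective and otherwise non-trivial closed intervals in a single fibre.

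The key observation is that $f^n(P_y) = P_{T_\rho^n y}$ and the orbit $\{T_\rho^n y\}_{n \in \ZZ}$ visits $y$ only once, so \emph{every plateau is wandering} under $f$. Thus if any $P_y$ had non-empty interior in $\ntorus$, this interior would provide a non-empty wandering open set; conversely, a standard Baire-category argument exploiting minimality of $T_\rho$ and the fibred form of $h$ shows that transitivity of $f$ is \emph{equivalent} to the assertion that no plateau $P_y$ has non-empty interior in $\ntorus$. It therefore suffices to prove the fibred Denjoy statement: \emph{for $f$ of class $\mathcal{C}^2$, no plateau of $h$ contains a non-empty open subset of $\ntorus$.}

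To prove this I would argue by contradiction, mimicking Denjoy's classical distortion argument on the circle. If some plateau has non-empty interior, then for some $\theta_0$ one can select a small fibre interval $J \subset \{\theta_0\} \times \kreis$ whose forward iterates $J_n := f^n(J) \subset \{\theta_0 + n\omega\} \times \kreis$ are pairwise disjoint. Take the denominators $q_k$ of the continued-fraction convergents of $\omega$, so that $q_k\omega \to 0 \pmod 1$. The $\mathcal{C}^2$ regularity furnishes a uniform bound on the total variation along fibres of $\log \partial_x f_\theta$, and a Denjoy--Koksma inequality applied to the Birkhoff sum $\sum_{j=0}^{q_k-1} \log\partial_x f_{\theta + j\omega}$ yields a uniform distortion bound for $f^{q_k}_{\theta_0}$ restricted to $J$. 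Combined with the fact that the disjoint arcs $J_n$ all sit inside the torus of finite total area, this forces $|J_{jq_k}|$ to remain comparable to $|J|$ along the subsequence and eventually to overlap a previous iterate, contradicting the wandering property.

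The main obstacle is precisely this last step: making the classical one-dimensional Denjoy argument work uniformly in the base angle $\theta$. One needs the $\mathcal{C}^2$ regularity to deliver the vertical bounded-variation estimates simultaneously on all fibres, and the Denjoy--Koksma inequality for $\omega$ must interact correctly with this fibred distortion control. The construction carried out earlier in this paper shows that the $\mathcal{C}^2$ assumption cannot be relaxed to $\mathcal{C}^0$, since transitive but non-minimal continuous examples semi-conjugate to an irrational rotation do exist.
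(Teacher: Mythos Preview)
First, note that the present paper does not prove Theorem~\ref{t.denjoy}: the result is quoted from \cite{jaeger/keller:2006} as motivation, so there is no proof here to compare your proposal against.

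That said, your argument has a basic gap. Since the semi-conjugacy $h$ is fibre-respecting, every plateau $P_y = h^{-1}(y)$ lies inside a single vertical fibre $\{\theta\}\times\kreis$ and therefore \emph{automatically} has empty interior in $\ntorus$. Hence the right-hand side of your claimed equivalence ``$f$ transitive $\Leftrightarrow$ no $P_y$ has non-empty interior in $\ntorus$'' is vacuously true, and if the equivalence were correct the theorem would follow with no regularity hypothesis whatsoever; the $\mathcal{C}^2$ assumption, and the entire Denjoy--Koksma step you outline, would be superfluous. Your ``standard Baire-category'' converse is asserted but not argued, and in fact the paper itself (in the proof of the uniqueness of the minimal set, Section~\ref{ss.uniqueness-structure}) explicitly treats the case of an $f$ with no invariant strip that is \emph{not} transitive, so this implication is at best non-obvious.

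The Denjoy portion of your argument also fails on its own terms. The iterates $J_n = f^n(J)$ of a fibre interval $J\subset\{\theta_0\}\times\kreis$ live in the pairwise distinct fibres $\{\theta_0+n\omega\}\times\kreis$, so they are \emph{trivially} disjoint and there is no finite-total-length constraint forcing $|J_n|\to 0$: the one-dimensional principle ``the wandering intervals must fit inside a circle of length $1$'' has no analogue when each iterate sits in its own fibre. What one must actually exclude to obtain transitivity is an $f$-invariant open set that fails to be dense; such a set is assembled from pieces of plateaux spread over a whole range of fibres, and ruling it out requires comparing the fibre maps $f_\theta$ at \emph{different} base points and controlling how the vertical geometry varies with $\theta$. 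That is where the $\mathcal{C}^2$ hypothesis genuinely enters in \cite{jaeger/keller:2006}, not in bounding a single fibrewise orbit of intervals.
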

Obviously, this raises again the question whether transitive but non-minimal
behaviour is possible in the absence of invariant strips. However, it must be
said that our results are not directly related to theorem~\ref{t.denjoy},
since we do not obtain examples with this type of regularity. As stated, we
are only able to choose the fibre maps ${\cal C}^1$, and by some slight
modifications one might push this to ${\cal C}^{1+\alpha}$ (see
section~\ref{Nu-Hoelder}). (Of course, the fact that a Denjoy-like construction
produces this type of regularity is by no means surprising.) Hence, the
question whether the assertion of theorem~\ref{t.denjoy} can be improved to
minimality is still open. In fact, this is not even known under much stronger
assumptions, for example if $f$ is real-analytic and $\omega$ is Diophantine,
or if $f$ is induced by the projective action of a quasiperiodic
$\textrm{SL}(2,\R)$-cocycle.

\section{Construction of the graph $\Gamma$}\label{section.curve}

\subsection{Graphs with flat intersections}
Let $R$ be a quasiperiodically forced circle homeomorphism over some circle
irrational rotation $\theta \mapsto \theta+\omega$.  
We will consider the graphs $\Gamma$ of continuous
 maps $\gamma: \kreis \to \kreis$. For
$I\subseteq \kreis$ we define $\Gamma_{|I} := \Gamma \cap ( I \times \kreis)$. 
The $C^0$-distance between continuous maps induces a distance $d$ between
graphs. By $p_1 : \thx \mapsto \theta$ we denote the canonical projection to the
first coordinate. 
\begin{definition}
Suppose $\gamma,\gamma': \kreis \to \kreis$ are two continuous maps with
graphs $\Gamma,\Gamma'$. 
  \label{def:flatintersections}
  \begin{list}{(\alph{enumi})}{\usecounter{enumi}}
  \item We say $\Gamma$ and $\Gamma'$ \emph{have flat intersections} if $p_1(\Gamma
  \cap \Gamma')$ consists of a finite union of disjoint intervals, none of
  which is reduced to a single point.
  \item We say $\Gamma$ and $\Gamma'$ \emph{cross over} some interval $I \subset
\kreis$ if there exists an interval $I' \subset I$ and an open interval $O \subsetneq
\kreis$ such that $\Gamma_{\mid I'}, \Gamma'_{\mid I'} \subset I' \times O$
and $\Gamma'_{\mid I'}$ meets both connected components of $(I' \times O)
\setminus \Gamma$.
  \end{list}
\end{definition}

\begin{prop}\label{prop.construction-curve}
Let $R$ be a quasiperiodically forced circle homeomorphism.  Assume that $R$
does not admit any continuous $(p,q)$-invariant graph.

Then there exists a continuous graph $\Gamma$ which has flat intersections with
all its iterates $R^{n}(\Gamma) \ (n\in\Z)$. 
\end{prop}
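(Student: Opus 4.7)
My plan is to construct $\Gamma$ as the uniform limit of a sequence $(\Gamma_k)_{k\geq 0}$ of continuous graphs, installing at each stage one non-degenerate ``flat'' interval of intersection with one iterate of $R$. The starting graph $\Gamma_0$ can be any continuous graph, for instance a horizontal one $\gamma_0 \equiv x_0$. I enumerate $\Z\smin\{0\}$ as $(n_k)_{k\geq 1}$ and at stage $k$ modify $\Gamma_{k-1}$ locally so that the new graph $\Gamma_k$ coincides with $R^{n_k}(\Gamma_k)$ on some non-degenerate subinterval of $\kreis$, without destroying the plateaus installed at earlier stages $j<k$.

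\textbf{Key step (installing one plateau).} Given a continuous graph $\Gamma$ and a nonzero integer $n$, I choose a short base interval $J\subset \kreis$ whose translates $J-n\omega,\ldots,J-\omega,J$ are pairwise disjoint in $\kreis$ and avoid the base intervals of the plateaus installed previously; irrationality of $\omega$ and the finiteness of the existing constraints make this possible. On a subinterval $J'\subset J$ I redefine $\gamma(\theta) := R^n_{\theta-n\omega}(\gamma(\theta-n\omega))$ and interpolate continuously back to the original $\gamma$ on $J\smin J'$. Because $J$ and $J-n\omega$ are disjoint, the modification over $J$ leaves $R^n(\Gamma')\cap (J\times \kreis)$ unchanged, so by construction $\Gamma'|_{J'}=R^n(\Gamma')|_{J'}$, producing a genuine flat intersection over $J'$. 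The perturbation has $C^0$-norm controlled by the oscillation of $\gamma$ on intervals of length $|J|$, which can be made as small as one wishes.

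\textbf{Convergence, role of the hypothesis, and main obstacle.} By requiring the perturbation at stage $k$ to have $C^0$-norm at most $2^{-k}$ and its base support to avoid all the finitely many base intervals and translates relevant to plateaus installed at stages $j<k$, I guarantee both uniform convergence $\Gamma_k \ra \Gamma$ and the persistence in the limit of every installed plateau. For each $n$, the set $p_1(\Gamma\cap R^n(\Gamma))$ thus contains the plateau interval prescribed at the stage addressing $n$; that it in fact consists of only \emph{finitely} many intervals requires an additional argument ruling out uncontrolled intersections accumulating outside the installed plateaus. The hypothesis that $R$ admits no continuous $(p,q)$-invariant graph is used precisely to exclude rigid situations in which an iterate $R^n$ would force $\Gamma$ to coincide globally with one of its iterates, which would conflict with the disjointness requirements of the construction. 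The main obstacle is the cascade effect inherent to the problem: a local modification at stage $k$ alters \emph{every} iterate $R^m(\Gamma)$ simultaneously, so preserving the earlier plateaus requires an ever-growing but finite family of avoidance conditions on the base, and showing that these conditions remain compatible as $k\to\infty$ while simultaneously preventing accumulation of non-flat intersection components is the real technical heart of the argument.
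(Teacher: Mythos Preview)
Your proposal has a genuine gap, and you identify it yourself: installing one plateau per iterate only guarantees that $p_1(\Gamma\cap R^n\Gamma)$ \emph{contains} a non-trivial interval, not that it \emph{equals} a finite union of non-trivial intervals. You write that ``showing that these conditions remain compatible \ldots\ while simultaneously preventing accumulation of non-flat intersection components is the real technical heart of the argument,'' and then stop. That is precisely the content of the proposition, and your outline does not supply it. Nothing in your construction prevents $\Gamma$ and $R^n(\Gamma)$ from meeting transversally at infinitely many isolated points outside your plateau, or from developing new isolated tangencies when you later modify $\Gamma$ to install a plateau for some other iterate.

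The paper's strategy is fundamentally different from yours. Rather than installing one plateau for each $n$, the inductive step (their Proposition~\ref{prop.construction-curve-recurrence}) takes a graph $\Gamma$ already having flat intersections with $R^k(\Gamma)$ for $1\leq k\leq n-1$ and perturbs it so that the \emph{entire} intersection $\Gamma\cap R^n(\Gamma)$ becomes a finite union of non-degenerate arcs, while the earlier intersections remain flat with the same number of components. The mechanism for doing this is not a single local patch but a systematic treatment of all ``bad'' intersection points, organised via a first-return map $T$ to $\Gamma$ in time $\leq n$, finite \emph{itineraries}, and \emph{perturbation boxes} in which the graph and all the relevant iterates are simultaneously straightened. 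The cascade effect you worry about is handled exactly by this box structure: one perturbs all the iterates $R^{q_i}(\Gamma)$ coherently inside the box, so that the effect on every $\Gamma\cap R^k(\Gamma)$ with $k\leq n$ is controlled at once. The no-$(p,q)$-invariant-graph hypothesis enters not to prevent global coincidence of $\Gamma$ with an iterate, but to guarantee the \emph{escaping hypothesis}: every point eventually leaves $\Gamma\cup\cdots\cup R^n(\Gamma)$, which is what makes itineraries finite and the whole box machinery terminate. Finally, passing to the limit requires more than $C^0$-convergence: one must choose the $\varepsilon_n$ so small that $\Gamma$ stays in certain open sets $\mathcal{O}_{n,k}$ controlling $p_1(R^k\Gamma\cap\Gamma)$, which is how the finiteness of components survives in the limit.
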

In the situation of theorem~\ref{t.mainthm}, the non-existence of continuous $(p,q)$-invariant graphs follows immediately from the minimality of $R$. However, the the fact that the above proposition holds under this weaker assumption will be useful in the later sections, and the proof is identical in both cases. 

\begin{adde}\label{addendum}
Assume that $R$ is topologically transitive.
Then the graph $\Gamma$ may be required to satisfy the following property:
\begin{description}
\item[(T)]  For all non-trivial intervals $I,J \subset \kreis$, there exists some $n \geq 0$
such that $\Gamma$ and $R^n(\Gamma)$ cross over $I \cap (J+n\omega)$.
\end{description}
\end{adde}

The graph $\Gamma$ required by proposition~\ref{prop.construction-curve}
will be the limit of a sequence $(\Gamma_{n})$, obtained by induction using proposition~\ref{prop.construction-curve-recurrence} below.
The section is organised as follows.
We first state proposition~\ref{prop.construction-curve-recurrence} and show that it entails 
proposition~\ref{prop.construction-curve}.
Sections~\ref{ss.perturbation},~\ref{subsection.minimal-real-analytic} and~\ref{subsection.general-case}
are mostly devoted to the proof of proposition~\ref{prop.construction-curve-recurrence}.
The addendum is proved at the end of section~\ref{subsection.general-case}.

We say a graph $\tilde{\Gamma}$ is an \emph{$\varepsilon$-modification} of a
graph $\Gamma$ over $I \ssq \kreis$ if $\tilde{\Gamma}_{|I^c} = \Gamma_{|I^c}$
and $d(\tilde{\Gamma},\Gamma) < \varepsilon$, where $d$ denotes the ${\cal
C}^0$-distance.
\begin{prop}\label{prop.construction-curve-recurrence}
Let $R$ be a quasiperiodically forced circle homeomorphism.  Assume that $R$
does not admit any continuous $(p,q)$-invariant graph.
Let $\Gamma$ be a continuous graph that has flat intersections with $R^k(\Gamma)$ for
$ 1 \leq k \leq n-1$.

Let $\varepsilon>0$.
Then there exists  a continuous graph $\Gammatil$ such that
\begin{enumerate}
\item $\Gammatil$ is an $\varepsilon$-modification of $\Gamma$ over a set of measure less than $\varepsilon$;
\item for $1 \leq k \leq n-1$,  the graph $\Gammatil$ has flat intersections with $R^k(\Gammatil)$, moreover 
the set $p_1(\Gammatil \cap R^k\Gammatil)$ contains $p_1(\Gamma \cap R^k\Gamma)$ and has the same number of connected components;
\item the graph $\Gammatil$ has flat intersections with $R^n(\Gammatil)$.
\end{enumerate}
\end{prop}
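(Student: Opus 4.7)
The strategy is to locate the ``bad'' (non-flat) part of the intersection set $S := p_1(\Gamma \cap R^n(\Gamma))$ and perform small local perturbations of $\gamma$ that convert each non-flat intersection into a genuine flat intersection interval. Write $\gamma:\kreis\to\kreis$ for the continuous map whose graph is $\Gamma$, and define the auxiliary continuous map $\psi(\theta) := R^n_{\theta-n\omega}(\gamma(\theta-n\omega))$, whose graph is $R^n(\Gamma)$. Then $S$ is the coincidence set $\{\theta : \gamma(\theta)=\psi(\theta)\}$. The plan is to enlarge $S$ into a finite union of non-degenerate intervals via an $\varepsilon$-small modification of $\gamma$.

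First I would treat an isolated crossing $\theta_0$ of $\gamma$ and $\psi$. Pick a short interval $J=(\theta_0-\delta,\theta_0+\delta)$ and redefine $\tilde\gamma$ on $J$ to coincide with $\psi$ on a central sub-interval $J'\subset J$, interpolating monotonically (without new crossings of $\psi$) between $\gamma$ and $\psi$ on the two end-pieces of $J\setminus J'$. Since $\gamma(\theta_0)=\psi(\theta_0)$ and both are continuous, taking $\delta$ small makes the $C^0$-distance as small as needed. The crucial observation is that, provided $J$ is chosen disjoint from $J-n\omega$, the map $\psi$ itself is untouched by the perturbation of $\gamma$ over $J$, so $R^n(\tilde\Gamma)$ agrees with $R^n(\Gamma)$ over a neighborhood of $J$, and the flat piece $J'$ built this way really belongs to $p_1(\tilde\Gamma\cap R^n(\tilde\Gamma))$.

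Next I would cover the bad set $B\subset S$, the part of $S$ outside the interior of $S$, by a finite disjoint family of such intervals $J_1,\ldots,J_N$ small enough that: the total length is less than $\varepsilon/2$; $J_i\cap(J_j+n\omega)=\emptyset$ for all $i,j$, so the local perturbations do not interact through $R^n$; and $\bigcup_i J_i$ is disjoint from each set $p_1(\Gamma\cap R^k(\Gamma))\cup (p_1(\Gamma\cap R^k(\Gamma))-k\omega)$ for $1\le k\le n-1$, so that the inductive flat intersections with lower iterates are preserved verbatim. Such a cover exists because, by the inductive hypothesis, the forbidden sets are finite unions of intervals, and the irrationality of $\omega$ allows the translation condition to be arranged on sufficiently fine scales. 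Declaring $\tilde\gamma$ to equal $\gamma$ outside $\bigcup J_i$ and the local perturbation on each $J_i$ yields a graph $\tilde\Gamma$ for which a direct check delivers all three conclusions of the proposition.

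The chief obstacle is the covering step: a priori $B$ could be uncountable or even of positive measure, and it could meet existing flat intersection regions of $\Gamma$ with lower iterates, forcing conflicts between the small-measure and avoidance requirements. In the real-analytic case the difference $\gamma-\psi$ is analytic, and its zero set is either finite or all of $\kreis$; the latter would make $\Gamma$ an $R^n$-invariant graph, contradicting the no-continuous-$(p,q)$-invariant-graph assumption, so $B$ is finite and the cover is immediate. In the general continuous case, one must use the no-invariant-graph hypothesis more subtly to control the interaction between $B$ and the existing flat regions and to rule out pathological configurations of the coincidence set, and I expect this to be the technical heart of the argument, justifying the split into a real-analytic and a general subcase.
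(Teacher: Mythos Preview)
Your covering step contains a genuine gap, and it is not the one you identify. The real obstacle is not the size or complexity of the bad set $B$, but the \emph{avoidance condition} you impose: that the perturbation intervals $J_i$ be disjoint from $p_1(\Gamma\cap R^k\Gamma)$ and its translate by $-k\omega$ for every $1\le k\le n-1$. This is simply impossible in general. A bad point $\theta_0\in B\subset p_1(\Gamma\cap R^n\Gamma)$ can perfectly well sit in the interior of one of the flat intervals of $p_1(\Gamma\cap R^k\Gamma)$ for some $k<n$; then every interval $J$ containing $\theta_0$ meets that set. If you nevertheless modify $\gamma$ on $J$ while leaving $\gamma$ untouched on $J-k\omega$, the curve $R^k(\tilde\Gamma)$ over $J$ is unchanged while $\tilde\Gamma$ over $J$ has moved, so the flat intersection with $R^k(\tilde\Gamma)$ over $J$ is destroyed and item~2 fails. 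This already bites in the real-analytic case with finitely many isolated crossings, so the difficulty is not a pathological coincidence set.

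The paper handles this by abandoning the idea of a single local perturbation and instead perturbing $\Gamma$ \emph{coherently along an entire finite orbit segment}. One introduces the first return map $T$ to $\Gamma$ in time $\le n$ and the \emph{itinerary} $N(z)=\{q_{-r},\ldots,q_s\}$ of a point $z\in\Gamma$, recording all times $|q|$ at which $R^q(z)\in\Gamma$ (the escaping hypothesis, equivalent to the absence of continuous $(p,q)$-invariant graphs, makes these finite). A \emph{perturbation box} $B=I\times J$ is a small rectangle whose base $I$ carries a fixed itinerary; the perturbation lemma then replaces $\Gamma$ simultaneously over each translated interval $I+q_i\omega$ by straight segments. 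Because the modifications are done on the whole $T$-orbit at once, the sets $p_1(\Gamma'\cap R^k\Gamma')$ for $k<n$ can only grow by intervals attached to the old ones, preserving flatness and the component count, while the targeted $n$-th intersection is flattened. In the general continuous case one further stratifies the bad set by the length $\ell$ of $T$-orbits and inducts downward on $\sup\ell$. Your sketch is missing precisely this orbit-coherent perturbation mechanism.
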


\begin{proof}[Proof of proposition~\ref{prop.construction-curve} using proposition~\ref{prop.construction-curve-recurrence}]
We begin the induction by choosing   $\Gamma_{1}$ to be any continuous graph. 
Let $n \geq 1$, and assume inductively that there exists a continuous graph $\Gamma_{n-1}$ which has flat intersection with  its iterates $R^k(\Gamma_{n-1})$  for all $k$ such that  $ 1 \leq k \leq n-1$. We apply proposition~\ref{prop.construction-curve-recurrence}, to get a continuous graph $\Gamma_{n} = \tilde \Gamma_{n-1}$ which has flat intersection with  its iterates $R^k(\Gamma_{n})$  for all $k$ such that  $ 1 \leq k \leq n$. Furthermore, we can demand that the graph  $\Gamma_{n}$ is a $\varepsilon_{n}$-modification of the graph $\Gamma_{n-1}$, with $\varepsilon_{n} \leq \frac{1}{2^n}$ (the choice of $\varepsilon_{n}$ will be made more precise below).

Thus we get a sequence $(\Gamma_{n})_{n \geq 1}$ of continuous graphs, which is a Cauchy sequence for the ${\cal C}^0$-distance. Let  $\Gamma$ be the limit map.
Let $k$ be a fixed positive integer. The sequence $(p_{1}(R^k(\Gamma_{n}) \cap \Gamma_{n}))_{n \geq 1}$ is an increasing sequence of subsets of $\kreis$, denote its limit by $I^k$,
$$
I^k = \mathrm{Clos}\left( \bigcup_{n \geq 1} p_{1}(R^k(\Gamma_{n}) \cap \Gamma_{n})\right).
$$ Note that according to property 2 of
 proposition~\ref{prop.construction-curve-recurrence}, every set in this
 sequence has the same number $a_{k}$ of connected components, so that $I^k$
 is again the disjoint union of at most $a_{k}$ non trivial compact intervals.

This set $I^k$ is included in $p_{1}(R^k(\Gamma) \cap \Gamma)$. To get the
reverse inclusion we have to make a more careful choice of the sequence
$(\varepsilon_{n})$.  For a fixed $n \geq k$, denote by ${\cal O}_{n,k}$ the
set of continuous graphs $\Delta$ such that $p_{1}(R^k(\Delta) \cap \Delta)$
is included in the $\frac{1}{n}$-neighbourhood of $p_{1}(R^k(\Gamma_{n}) \cap
\Gamma_{n})$ (which we denote by $V_{\frac{1}{n}} ( p_{1}(R^k(\Gamma_{n}) \cap
\Gamma_{n}))$). This set is open for the ${\cal C}^0$ distance.  Thus we may
have chosen the sequence $(\varepsilon_{n})_{n\in\N}$ so small that for every
$n \geq k$, $\Gamma \in {\cal O}_{n,k}$.
This entails, for every $k \leq n$,
$$ p_{1}(R^k(\Gamma_{n}) \cap \Gamma_{n}) \subset p_{1}(R^k(\Gamma) \cap
\Gamma) \subset V_{\frac{1}{n}} \left( p_{1}(R^k(\Gamma_{n}) \cap
\Gamma_{n})\right).
$$
Letting $n$ tends towards infinity (with fixed $k$) gives the required equality 
$I^k = p_{1}(R^k(\Gamma) \cap \Gamma)$.
Thus we get that $\Gamma$ has flat intersection with $R^k(\Gamma)$. 
\end{proof}
\subsection{Perturbation boxes}\label{ss.perturbation}
In this section, we  introduce the tools required by the proof of  proposition
\ref{prop.construction-curve-recurrence}.
We consider a continuous graph $\Gamma$ and some positive integer $n$
and make the following \emph{escaping hypothesis}.
\begin{center}
\emph{Every point has an iterate outside $\Gamma\cup R(\Gamma)\cup\dots\cup R^n(\Gamma)$.}
\end{center} 
In the case where $R$ is minimal this is obviously true, in
lemma~\ref{l.escaping} below we show that it also holds under the weaker
hypothesis of proposition~\ref{prop.construction-curve}~.

\subsubsection*{Returning dynamics on $\Gamma$}\label{sss.returning}
We introduce the \emph{first return map $T$ on $\Gamma$ in time less or equal to $n$}:
let $D$ be the set points $z$ in $\Gamma$ such that there exists some iterate
$R^q(z)$ in $\Gamma$ with $1 \leq q \leq n$; we then define the map $T$ on $D$
by $T(z) = R^q(z)$ where $q$ is the least such integer.

Due to the escaping hypothesis,  the orbit of any point of $\Gamma$ by $T$ is finite.
This allows the following definitions.
\medskip

\noindent \textbf{Notations and definitions.\quad} To any point $z\in \Gamma$
is associated a unique finite set, $N(z)=\{q_{-r}<\dots<
q_0=0<\dots<q_s\}\subset \ZZ$, called the \emph{itinerary of $z$}, such that
\begin{itemize}
\item  for any integer $k$ such that $q_{-r}-n\leq k \leq q_s+n$, the point $R^k(z)$
belongs to $\Gamma$ if and only if $k=q_i$ for some $-r\leq i\leq s$,
\item  $|q_i-q_{i+1}|\leq n$ for each $-r\leq i<s$.
\end{itemize}
We thus have $T^i(z)=R^{q_{i}}(z)$, and the sequence $(R^{q_{i}}(z))_{i=-r,
\dots ,s}$ is the \emph{$T$-orbit} of $z$.  We will denote by $\ell(z) = s +
r$ the length of this orbit.  When we wish to emphasize the dependence on the
point $z$, we will write $r(z)$, $s(z)$, and so on. Let us note that the
$T$-orbit of $z$ is reduced to $(z)$ if and only if all the iterates $f^k(z)$
are outside $\Gamma$ for $0 < \mid k \mid \leq n$.

More generally, for any sufficiently small interval $I \subset \kreis$, there
exists a finite set $ \{ q_{i}, i=-r , \dots, s\}, $ again called the
\emph{itinerary of $I$}, such that $0<q_{i+1} - q_{i} \leq n$ and for all $k$
such that $q_{-r}-n \leq k \leq q_{s}+n$ we have
$$
R^{k}(\Gamma_{\mid I}) \cap \Gamma \neq \emptyset \Leftrightarrow k \in 
 \{ q_{i}, i=-r , \dots, s\}.
$$ This follows from compactness together with the openness with respect to
$z$ of the property $R^k(z) \notin \Gamma$ for a given $k$.

\subsubsection*{Definition and existence of perturbation boxes}

\begin{definition}\label{def.boxes}
A rectangle $B= I \times J$ is a \emph{perturbation box} if
\begin{enumerate}
\item $I$ has a finite itinerary  $\{ q_{i}, i=-r , \dots s\}$
and has pairwise disjoint iterates
$I+k\omega$ with $q_{-r}-n \leq k \leq q_{s}+n$;
\item one of the two endpoints of $\Gamma_{\mid I}$  has the same itinerary  as  $I$;
\item for all $k$ such that $q_{-r}-n \leq k \leq q_{s}+n$, the graph
$\left(R^{-k}(\Gamma)\right)_{|I}$
\begin{itemize}
\item is contained in $B$ if $k = q_{i}$ for some $-r \leq i \leq s$,
\item is disjoint from $B$ otherwise.
\end{itemize}
\end{enumerate}
\end{definition}
We define the itinerary of the box to be the itinerary of $I$.  Next we prove
that every points of $\Gamma$ belong to a perturbation box.

\begin{lemm}\label{lemma.exists-perturbation-boxes}
If $z=(\theta,x) \in \Gamma$, then there exists $\delta ,\eta$ arbitrarily
small such that $B = [\theta,\theta+\delta ] \times [x-\eta,x+\eta]$ is a
perturbation box whose itinerary coincides with the itinerary of $z$.
\end{lemm}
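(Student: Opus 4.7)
The plan is to satisfy the three conditions of definition~\ref{def.boxes} by shrinking $\delta$ and $\eta$ one after another, exploiting the fact that each relevant condition is either of the form $R^k(w)\notin\Gamma$ (an open property in $w$) or of the form $z\in R^{-k}(\Gamma)$ (which is stable under continuity precisely because the continuous graph $R^{-k}(\Gamma)$ then passes through $z$). Write $N(z)=\{q_{-r}<\cdots<q_s\}$ for the itinerary of $z$, and let $K=\{k\in\ZZ:q_{-r}-n\le k\le q_s+n\}$; this is a finite set.

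First, since $\omega$ is irrational, the finitely many points $\theta+k\omega$ with $k\in K$ are pairwise distinct in $\kreis$, so for $\delta$ less than their minimum pairwise distance, the translates $I+k\omega$, where $I:=[\theta,\theta+\delta]$, are pairwise disjoint. This handles the disjointness part of condition~1.

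Next, I would further shrink $\delta$ so that the itinerary of $I$ coincides with $N(z)$. For each $k\in K\setminus N(z)$ the open condition $R^k(\cdot)\notin\Gamma$ holds on some neighbourhood $U_k$ of $z$; taking $\delta$ small enough so that $\Gamma_{|I}\subset\bigcap_{k\in K\setminus N(z)}U_k$ yields $R^k(\Gamma_{|I})\cap\Gamma=\emptyset$ for these $k$. For $k\in N(z)$, the point $R^k(z)$ itself lies in $R^k(\Gamma_{|I})\cap\Gamma$, so that intersection is non-empty. Hence the itinerary of $I$ is exactly $N(z)$, which is also the itinerary of the left endpoint $z$ of $\Gamma_{|I}$; this secures the itinerary clause of condition~1 and all of condition~2.

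Finally, I turn to condition~3. For $k\in N(z)$, the continuous graph $R^{-k}(\Gamma)$ passes through $z$, so for any prescribed $\eta>0$, making $\delta$ small enough forces the short arc $(R^{-k}(\Gamma))_{|I}$ into $B=I\times[x-\eta,x+\eta]$ by continuity of the corresponding graph map. For $k\in K\setminus N(z)$, the graph $R^{-k}(\Gamma)$ meets $\{\theta\}\times\kreis$ in a single point whose second coordinate $x_k$ differs from $x$; setting $2\eta_0:=\min_{k\in K\setminus N(z)}d_{\kreis}(x,x_k)>0$ and choosing any $\eta\in(0,\eta_0)$, continuity again shows that for $\delta$ small enough $(R^{-k}(\Gamma))_{|I}$ stays at vertical distance at least $\eta$ from $x$ and is therefore disjoint from $B$. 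Taking $\delta$ smaller than all the finitely many bounds that have arisen and $\eta<\eta_0$ arbitrary produces the desired perturbation box with itinerary $N(z)$, and both parameters can evidently be taken arbitrarily small. No deep obstacle appears; the only point requiring care is to keep the families $k\in N(z)$ and $k\in K\setminus N(z)$ separate, applying the opposite continuity argument to each.
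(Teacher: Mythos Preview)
Your proof is correct and follows essentially the same approach as the paper's own proof, which is a terse two-sentence continuity argument. You have simply spelled out in detail the steps the paper leaves implicit: the disjointness of the translates $I+k\omega$, the fact that the itinerary of $I$ agrees with $N(z)$, and the separate treatment of $k\in N(z)$ versus $k\in K\setminus N(z)$ in condition~3.
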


\begin{proof}[Proof of the lemma]
For $\delta$ and $\eta$ small enough, the rectangle $B$ is disjoint from the
graphs $R^{-k}(\Gamma)$ with $q_{-r}-n\leq k\leq q_s + n $ such that $k$ does
not belong to the itinerary of $z$. For $k=q_{i}$, the graph $R^{-k}(\Gamma)$
contains the point $z$; hence, if the interval $\delta $ is chosen after
$\eta$ and small enough, then
$\left(R^{-k}(\Gamma)\right)_{|[\theta,\theta+\delta ]}$ is contained in $B$.
\end{proof}

\subsubsection*{Perturbation lemma}
If one assumes that the intersections $\Gamma\cap R^k(\Gamma)$ are controlled
for any $k$ up to $n-1$, the perturbation boxes can be used to build a
perturbed graph $\Gamma'$ whose intersections $\Gamma'\cap R^k( \Gamma')$ are
flat for $k$ up to $n$.  These perturbations are given by the following lemma.
\begin{lemm}[Perturbation lemma]\label{l.perturbation}
Let $B=I \times J$ be a perturbation box. Denote by $\{ q_{i}, i=-r , \dots,
s\}$ the itinerary of $I$.  Then there exists a perturbation $\Gamma'$ of
$\Gamma$ such that
\begin{enumerate}
\item the perturbation is supported in 
$\displaystyle \bigcup_{i=-r}^s R^{q_{i}}(B)$, and 
in particular the graphs $\left(R^{-q_{i}}(\Gamma')\right)_{|I}$ are still included in $B$.
\item Let  $1\leq k\leq n$ and define the sets  $X_{k}= p_1(\Gamma \cap R^k\Gamma)$
and $X'_{k}= p_1(\Gamma' \cap R^k\Gamma')$. Then
$X'_{k}$ is the union of $X_{k}$ and a finite number of intervals meeting $X_{k}$.
More precisely,
for every $i$ such that  $X_{k}$ meets $I+q_{i}\omega$,
there exists a non-trivial  interval $J_{i,k}$ satisfying
$X_{k} \cap (I+q_{i}\omega) \subset J_{i,k}  \subset I+q_{i}\omega$,
and we have the equality
$$
X'_{k}= X_{k} \cup  \bigcup_{X_{k} \cap( I+q_{i}\omega )\neq \emptyset} J_{i,k}.
$$ 
\end{enumerate}
Furthermore, if $R$ is real-analytic and $\Gamma$ is piecewise real-analytic,
then $\Gamma'$ can be chosen piecewise real-analytic.
\end{lemm}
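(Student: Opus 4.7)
The approach is to treat a perturbation inside the single box $B$ as the primary datum, and to transport it to each translate $R^{q_i}(B)$ via the homeomorphism $R^{q_i}$. Concretely, for each $i \in \{-r,\dots,s\}$ introduce the \emph{local branch} $\eta_i : I \to \kreis$ defined by $\eta_i(\phi) = R^{-q_i}_{\phi + q_i\omega}(\gamma(\phi + q_i\omega))$: by condition~(3) of definition~\ref{def.boxes} its graph equals $R^{-q_i}(\Gamma)_{|I}$ and is contained in $B$, and by condition~(2) all branches agree at the endpoint $\theta_\ell$ of $I$ whose itinerary coincides with that of $I$, taking there the common value $\gamma(\theta_\ell)$. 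Any family of continuous maps $\tilde\eta_i : I \to J$ matching $\eta_i$ at both endpoints of $I$ then determines a unique continuous graph $\Gamma'$ supported in $\bigcup_i R^{q_i}(B)$, via $\tilde\gamma(\phi + q_i\omega) := R^{q_i}_\phi(\tilde\eta_i(\phi))$ on $I + q_i\omega$ and $\tilde\gamma := \gamma$ elsewhere; by condition~(1) the translates $I + q_i\omega$ are pairwise disjoint, so this prescription is unambiguous.

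The cocycle identity, together with the disjointness in condition~(1) and the vertical separation in condition~(3), shows that for $1 \le k \le n$ the set $X'_k \cap (I + q_i\omega)$ can be non-empty only when $q_i - k$ equals some $q_{i'}$ in the itinerary, in which case it is exactly the translate by $q_i\omega$ of the coincidence set $\{\phi \in I : \tilde\eta_i(\phi) = \tilde\eta_{i'}(\phi)\}$; outside $\bigcup_i (I + q_i\omega)$, $\tilde\gamma$ coincides with $\gamma$ and so $X'_k$ coincides with $X_k$ there. Conclusion~(2) of the lemma is therefore equivalent to the following local statement: one can choose continuous maps $\tilde\eta_{-r}, \dots, \tilde\eta_s : I \to J$, coinciding with $\eta_i$ at the endpoints of $I$, such that for every pair $i' < i$ in the itinerary the set $\{\phi \in I : \tilde\eta_i(\phi) = \tilde\eta_{i'}(\phi)\}$ is either empty or a single non-trivial interval containing all original coincidences of $\eta_i$ and $\eta_{i'}$, and such that no coincidence appears for a pair whose branches did not already cross.

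This local problem is the principal obstacle. Two key enabling facts are that all branches share the common basepoint $(\theta_\ell, \gamma(\theta_\ell))$, and that by lemma~\ref{lemma.exists-perturbation-boxes} the height $|J|$ of the box may be chosen arbitrarily small, making the branches pairwise arbitrarily close on $I$. For each pair $(i, i')$ with a non-empty crossing set, select a small open interval $K_{i, i'} \subset I$ enclosing all their original coincidences, set $\tilde\eta_i := \eta_{i'}$ on $K_{i, i'}$, and interpolate back to $\eta_i$ in a narrow layer near the boundary of $K_{i, i'}$; pairwise disjointness of the $K_{i, i'}$, which can be arranged after a preliminary perturbation placing the coincidences of distinct pairs at distinct points, prevents the creation of spurious coincidences between pairs whose branches did not originally cross. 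Piecewise real-analyticity is preserved by choosing piecewise real-analytic gluing functions throughout. The careful combinatorial verification of the intersection structure, together with the auxiliary genericity argument, is the most delicate part of the proof.
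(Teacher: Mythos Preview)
Your reduction to a local problem on $I$ is correct and essentially matches the paper: one works with the branches $\eta_{-r},\dots,\eta_s$ in $B$, all passing through the common basepoint $(\theta_\ell,\gamma(\theta_\ell))$, and must replace them by continuous maps $\tilde\eta_i$ with the same boundary values so that each pairwise coincidence set becomes a single non-trivial subinterval of $I$ containing the original one.

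The gap is in your solution of this local problem. You propose to choose, for each pair $(i,i')$, an interval $K_{i,i'}$ enclosing all their coincidences, to set $\tilde\eta_i:=\eta_{i'}$ there, and to rely on the $K_{i,i'}$ being pairwise disjoint. But the boundary condition you impose forces $\tilde\eta_i(\theta_\ell)=\eta_i(\theta_\ell)=\gamma(\theta_\ell)$ for every $i$, so \emph{every} pair of branches coincides at $\theta_\ell$ and every $K_{i,i'}$ must contain $\theta_\ell$. They cannot be pairwise disjoint, and no preliminary perturbation can separate these basepoint coincidences without violating the boundary condition. For the same reason your side condition ``no coincidence appears for a pair whose branches did not already cross'' is vacuous: all pairs already cross at $\theta_\ell$. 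Even ignoring this, a single branch $\tilde\eta_i$ would be asked to equal several distinct $\eta_{i'}$ on overlapping regions, which is inconsistent. (Incidentally, the height $|J|$ is part of the given data of the lemma and cannot be shrunk.)

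The paper exploits the common basepoint in a much simpler, simultaneous way rather than pairwise. Writing $I=[\theta,\theta+\delta]$ with $z=(\theta,\gamma(\theta))$, it picks $\lambda\in(0,\delta)$ so small that for every pair $i,j$ the original branches are either equal at $\theta+\delta$ or disjoint on $[\theta+\lambda,\theta+\delta]$; then it replaces each $\eta_i$ by the constant $\gamma(\theta)$ on $[\theta,\theta+\lambda]$ followed by the straight segment to $(\theta+\delta,\eta_i(\theta+\delta))$. All $\tilde\eta_i$ now agree on $[\theta,\theta+\lambda]$, and on $[\theta+\lambda,\theta+\delta]$ two of them either coincide entirely or meet only at $\theta+\lambda$. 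Hence every pairwise coincidence set is either $[\theta,\theta+\lambda]$ or all of $I$, and in either case contains the original one by the choice of $\lambda$. This is the missing idea.
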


\begin{proof}[Proof of the perturbation lemma \ref{l.perturbation}]
We want to construct the new graph $\Gamma'$ by modifying the graph $\Gamma$ above each interval $I+q_{i}\omega$.
Since all the intervals $I+q_{i}\omega$ are pairwise disjoint (item 1 of the definition of perturbation box), this amounts to modifying each  graph $R^{-q_{i}}(\Gamma)$ above $I$.

We denote by $z$ the endpoint of $\Gamma_{\mid I}$ that has the same itinerary
as $I$ (item 2 of the definition of perturbation box).  To fix ideas, we
assume that $z$ is the left endpoint (the proof is entirely similar if $z$ is
the right endpoint).  The definition of the itinerary entails that $z$ also
belongs to $R^{-q_{i}}(\Gamma)$ for all $i$.  Denote $I =
[\theta,\theta+\delta ]$. Choose some $ \lambda \in (0,\delta)$ with the
property that for any given $i,j \in \{-r,\ldots,s\}$ the graphs
$R^{-q_{i}}(\Gamma)_{\mid[\theta+\lambda,\theta+\delta]}$ and
$R^{-q_{j}}(\Gamma)_{\mid[\theta+\lambda,\theta+\delta]}$ either coincide at
$\theta+\delta$ or are disjoint.  Above $I$, we replace each graph
$R^{-q_{i}}(\Gamma)$ by the union of two straight segments:
\begin{itemize}
\item above $[\theta,\theta+\lambda]$, the new graph is a horizontal segment (starting at $z$) ;
\item above $[\theta+\lambda, \theta+\delta ]$, the new segment is forced by continuity
(it connects the right endpoint of the first segment
to the  point of $R^{-q_{i}}(\Gamma)$ above $\theta+\delta$).
\end{itemize}

Let us check the new graph $\Gamma'$ has the announced properties. The first
one concerning the support is a consequence of item 3 of the definition of a
perturbation box.  Let us turn to the second one.  For any $i,j $ between $-r$
and $s$, let us define the set
$$I_{i,j} = p_{1}(R^{-q_{i}}(\Gamma') \cap R^{-q_{j}}(\Gamma'))\cap I.$$
This set is either equal to $[\theta,\theta+\lambda]$ or to $I$.
In case $p_{1}(R^{-q_{i}}(\Gamma) \cap R^{-q_{j}}(\Gamma))\cap I = I$
then we still have $I_{i,j}=I$. 
By the choice of $\lambda$, in the opposite cases,
$[\theta,\theta+\lambda]$ contains 
$p_{1}(R^{-q_{i}}(\Gamma) \cap R^{-q_{j}}(\Gamma))\cap I$, and then so does $I_{i,j}$.
Note that in any case we have the following property:
\begin{equation}
\tag{*}\begin{array}{l}\textit{For all } i,j \in \{-r , \dots , s\}, \textit{
the interval } I_{i,j} \textit{ is a non-trivial interval} \\
\textit{containing } p_{1}(R^{-q_{i}}(\Gamma) \cap R^{-q_{j}}(\Gamma))\cap I \
. \end{array}
\end{equation}

\begin{claim}
For every $k=1,\dots,n$ and every $i = -r, \dots, s$,
$$
X_{k} \cap( I+q_{i}\omega )\neq \emptyset 
\ \Leftrightarrow \  \exists j, \ k=q_{i} - q_{j}.
$$
\end{claim}
When these equivalent properties hold we define $J_{i,k} = I_{i,j}
+q_{i}\omega$. Note that according to property (*), $J_{i,k}$ contains $X_{k}
\cap (I+q_{i}\omega)$. Obviously $J_{i,k}$ is contained in $I+q_{i}\omega$.

\begin{proof}[Proof of the claim]
$$
\begin{array}{rcll}
\exists j, \ k=q_{i} - q_{j}  & \Leftrightarrow   &  R^{k-q_{i}}(\Gamma) \cap B \neq \emptyset   &  \mbox{\footnotesize (item 3 of definition~\ref{def.boxes})} \\
 & \Leftrightarrow   &   z \in R^{k-q_{i}}(\Gamma)   &  \mbox{\footnotesize (choice of $z$)}  \\
  & \Leftrightarrow   &    R^{k-q_{i}}(\Gamma) \cap \left(R^{-q_{i}}(\Gamma)\right)_{\mid I} \neq\emptyset   \\
  & \Leftrightarrow   &  X_{k} \cap( I+q_{i}\omega )\neq \emptyset    &  \mbox{\footnotesize (apply $R^{q_{i}}$)} .
    \end{array}
$$

\end{proof}

Now for getting property 2 it only remains to checking 
 the following equality:
$$
X'_{k} = X_{k} \cup  \bigcup_{q_{i} - q_{j} = k}  \left(  I_{i,j} +q_{i}\omega \right).
$$
Let us  define the set $J = \bigcup_{i=-r}^{s} I+q_{i}\omega$.
In order to  check the above equality 
 we partition $\kreis$ into  the four sets
 $$
 J \cap ( J+k\omega ) , \quad J \setminus ( J+k\omega ) , \quad ( J+k\omega )  \setminus J, \quad \kreis \setminus \left( J \cup ( J+k\omega )  \right).
 $$
Let us examine the first set $J \cap ( J+k\omega ) $.
According to item 1 of the definition of the perturbation boxes, we have 
$$J \cap ( J+k\omega ) = \bigcup_{q_{i} - q_{j} = k} \left( I +q_{i}\omega
\right).$$ Let $i,j$ be such that $q_{i}-q_{j}=k$.  Restricted to
$I+q_{i}\omega$, we have $X'_{k} = I_{i,j} +q_{i}\omega$, and, according to
property (*), this set contains the restriction of $X_{k}$: in other words,
$$ X'_{k} \cap \left(I+q_{i}\omega\right) = \left[ X_{k} \cup ( I_{i,j}
+q_{i}\omega ) \right] \cap \left(I+q_{i}\omega\right).
$$ It remains now to check that outside $J \cap ( J+k\omega ) $, the sets
$X'_{k}$ and $X_{k}$ coincides.  The second set $J \setminus ( J+k\omega ) $
is the union of the intervals $I+q_{i}\omega$ for those $i$ such that $q_{j}
\neq q_{i}-k$ for every $j$. Choose such an $i$.  According to item 3 of the
definition of perturbation boxes,
$$ \Gamma_{\mid I+q_{i}\omega} \subset R^{q_{i}}(B) \mbox{ and }
\left(R^k(\Gamma)\right)_{\mid I+q_{i}\omega} \cap R^{q_{i}}(B) =\emptyset,
$$ and in particular $X_{k} \cap (I+q_{i}\omega ) $ is empty.  According to
item 1 of the lemma, the same relations hold when $\Gamma$ is replaced by
$\Gamma'$, thus $X'_{k} \cap (I+q_{i}\omega ) $ is also empty.  Thus $X_{k}$
and $X'_{k}$ coincide in restriction to $J \setminus ( J+k\omega ) $.

Restricting to the third set $( J+k\omega ) \setminus J$, we prove
symmetrically that both sets $X_{k}$ and $X'_{k}$ are empty, and so they also
coincide.

On the last set $\kreis \setminus \left( J \cup ( J+k\omega ) \right)$,
according to item 1 of the lemma, we have $ \Gamma' = \Gamma $ and $
R^k(\Gamma') = R^k(\Gamma) $ and thus $ X'_{k} = X_{k} $.

\end{proof}

\begin{bem}\label{r.perturbation}
In the proof of the perturbation lemma~\ref{l.perturbation}, we chose to
replace the graph $\Gamma$ by the simplest possible curve, \emph{i. e.} the
concatenation of two segments.  But of course we could have used more
complicated curves, for example the concatenation of a finite number of
segments. Thus, if we are given some point $z$ within the interior of the
perturbation box $B$, this modification allows us to force the perturbed
curve $\Gamma'$ to contain the point $z$. The same holds for any finite number
of points in $\inte(B)$ (obviously having distinct first coordinate).
\end{bem}

\subsection{Construction of the graph $\Gamma$: the real-analytic minimal case}\label{subsection.minimal-real-analytic}
The construction of the graph $\Gamma$ is easier if $R$ is a minimal rotation,
or more generally if $R$ is minimal and real-analytic.  To explain this easy
case, we state a simpler version of proposition
\ref{prop.construction-curve-recurrence}.

\begin{prop} \label{p.analytic-perturbation}
Suppose that the assumptions of proposition~\ref{prop.construction-curve-recurrence} hold and in addition:
\begin{itemize}
\item  $R$ is real-analytic  and minimal,
\item  $\Gamma$ is piecewise analytic.
\end{itemize}
Then there exists a curve $\Gammatil$ which satisfies the assertions proposition~\ref{prop.construction-curve-recurrence} 
and is piecewise analytic. 
\end{prop}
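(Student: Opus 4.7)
The real-analytic hypothesis drastically restricts the structure of the intersection $\Gamma\cap R^n(\Gamma)$: since both graphs are piecewise real-analytic, $p_1(\Gamma\cap R^n(\Gamma))$ is a finite disjoint union of closed sets, each of which is either a non-degenerate interval or a single point. The only obstructions to flatness at level $n$, in the sense of Definition~\ref{def:flatintersections}, are therefore finitely many isolated intersection points $z_1,\ldots,z_m\in \Gamma\cap R^n(\Gamma)$, each of which I intend to ``fatten'' into a flat interval by a local perturbation, leaving the pre-existing flat components of $X_n=p_1(\Gamma\cap R^n\Gamma)$ essentially untouched.

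The escaping hypothesis of Section~\ref{ss.perturbation} holds automatically here, since $\bigcup_{k=0}^{n}R^k(\Gamma)$ is a nowhere-dense subset of $\ntorus$ and $R$ is minimal. Applying Lemma~\ref{lemma.exists-perturbation-boxes} at each $z_j$ yields a perturbation box $B_j=I_j\times J_j$ containing $z_j$ whose itinerary equals that of $z_j$. Because $z_j\in\Gamma\cap R^n(\Gamma)$, both $0$ and $-n$ appear in this itinerary, so $n$ arises as a difference $q_0-q_j$ of two itinerary entries of the box, which by the internal claim of the perturbation lemma is exactly the condition needed to produce a new flat interval at level $n$. I would first group the $z_j$ into $T$-orbits and keep a single box per orbit (the chosen box then covers the whole orbit through its iterates $R^{q_i}(B_j)$), and then shrink each $B_j$ so that (i) each height $|J_j|$ is less than $\varepsilon$, (ii) the total Lebesgue measure of the union of perturbation supports $\bigcup_{j,i}(I_j+q_i\omega)\times\kreis$ is less than $\varepsilon$, and (iii) the supports associated with distinct $T$-orbits are pairwise disjoint. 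Condition (iii) is attainable because only finitely many orbits, each of finite length, are involved.

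With the supports pairwise disjoint, I may apply Lemma~\ref{l.perturbation} independently in each box to produce a single graph $\tilde\Gamma$, which remains piecewise analytic by the last assertion of that lemma (the local perturbation is just the concatenation of two segments). The three required properties then follow directly from the conclusion of the perturbation lemma: property~(1) from the size control in (i)--(ii); property~(2) from the formula $X'_k=X_k\cup\bigcup J_{i,k}$ for $k<n$, since each $J_{i,k}$ is a non-trivial interval meeting $X_k$, so $X'_k\supset X_k$ with the same number of connected components; and property~(3) from the same formula applied with $k=n$, which shows that each originally isolated point is absorbed into a newly produced non-trivial interval while the existing flat components of $X_n$ survive, leaving $X'_n$ a finite union of non-trivial intervals. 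The step I expect to be most delicate is (iii), since the boxes must be coordinated not only with each other but with their own finitely many $R$-iterates; however, the finiteness of the orbits and itineraries reduces this to a routine compactness exercise. This finiteness is precisely what breaks down in the general non-analytic setting, where the set of isolated intersection points may be infinite (even uncountable), and that is what forces the more elaborate argument of the next section.
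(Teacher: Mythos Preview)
Your approach is the paper's in spirit: use piecewise analyticity to reduce to finitely many isolated points of $\Gamma\cap R^n(\Gamma)$, then flatten each one via a perturbation box and Lemma~\ref{l.perturbation}. The paper, however, treats the isolated points \emph{one at a time}: it perturbs at a single $z$, obtains a new piecewise-analytic curve with strictly fewer singleton components of $\Gamma\cap R^n(\Gamma)$ (and unchanged flat structure for $k<n$), and repeats. This iteration sidesteps every coordination issue between boxes.

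Your simultaneous variant is workable but glosses over two points. First, for property~(2), the inclusion $X'_k\supset X_k$ is immediate from the formula, but ``same number of connected components'' does not follow from ``each $J_{i,k}$ meets $X_k$'' alone---a single $J_{i,k}$ could bridge two components of $X_k$. You must also require, as the paper does, that each width $|I_j|$ is less than half the minimum gap between components of $X_k$ for $1\le k\le n-1$. Second, ``applying Lemma~\ref{l.perturbation} independently in each box'' is more delicate than your condition~(iii) suggests: the proof of that lemma analyses $X'_k$ by partitioning $\kreis$ according to $J$ and $J+k\omega$, and with several boxes there are cross-terms $(I_j+q\omega)\cap(I_{j'}+(q'+k)\omega)$ for $j\neq j'$ that your disjointness of supports does not automatically kill; equivalently, after perturbing in $B_1$ you must check that $B_2$ is still a perturbation box for the new curve, which involves the \emph{extended} range $[q_{-r}-n,q_s+n]$, not just the itinerary. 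These issues can be handled by strengthening the disjointness or---more simply---by adopting the paper's one-at-a-time iteration, which costs nothing since only finitely many steps are needed.
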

\medskip

Replacing proposition~\ref{prop.construction-curve-recurrence} by the preceding one (which is much easier to show), the proof of proposition~\ref{prop.construction-curve}
given at the beginning of section~\ref{section.curve} can be easily adapted to prove the statement in the case when $R$ is minimal 
and real-analytic.

\begin{proof}[Proof of proposition~\ref{p.analytic-perturbation}]
Using that $R$ is real-analytic and $\Gamma$ is piecewise real-analytic,
we know that the intersection $\Gamma \cap R^n\Gamma$ has finitely
many connected components: these are isolated points and non-trivial curves.
We will explain how to build a modification $\Gamma'$ that is piecewise real-analytic,
satisfies items 1 and 2 of the proposition \ref{prop.construction-curve-recurrence}
and moreover
\begin{itemize}\it
\item[3-bis.] the number of connected components of
$\Gamma' \cap R^n \Gamma'$ that are reduced to a point
is strictly less than
the corresponding number for $\Gamma \cap R^n\Gamma$.
\end{itemize}
By repeating  this construction finitely many times, one obtains a modification $\Gammatil$
of $\Gamma$ that now satisfies the item 3 of the proposition.

Since $R$ is supposed to be minimal,
the escaping property is satisfied,
and we can apply  section~\ref{ss.perturbation}.
Let us consider an isolated point $z\in \Gamma\cap R^n(\Gamma)$.
By lemma~\ref{lemma.exists-perturbation-boxes} there exists an arbitrarily small perturbation box $B$
containing $z$ in its boundary and having the same itinerary as $z$.
We now apply the perturbation  lemma~\ref{l.perturbation}.
Since $B$ is arbitrarily small, the perturbation $\Gamma'$ is small: item 1 of proposition~\ref{prop.construction-curve-recurrence}
is satisfied. One can also assume that the width $p_{1}(B)$ is smaller
than half of the distance between any two connected components of $\Gamma\cap R^k(\Gamma)$
for any $1\leq k\leq n$.
Item 2 of lemma~\ref{l.perturbation} implies that $\Gamma'\cap R^k(\Gamma')$
contains $\Gamma\cap R^k(\Gamma)$ and has the same number
of connected components. When $k<n$, one gets item 2 of the proposition.
One also obtains  item 3-bis by noting that the component $\{z\}$ of $\Gamma\cap R^n(\Gamma)$
has been replaced by a non-trivial interval. This completes the construction of $\Gamma$ in the real-analytic case.
\end{proof}

\subsection{Construction of the graph $\Gamma$: the general case}
\label{subsection.general-case}
In this section we prove proposition~\ref{prop.construction-curve-recurrence} and the addendum \ref{addendum} to proposition~\ref{prop.construction-curve}~.
\subsubsection*{Escaping hypothesis}
We start by checking the escaping hypothesis under which points have finite itineraries (see section~\ref{ss.perturbation}).
\begin{lemm} \label
{l.escaping}
Assume $R$ does not admits any continuous $(p,q)$-invariant graph.
Then for every continuous graph $\Gamma$ and every $n>0$,
the escaping hypothesis is satisfied.
\end{lemm}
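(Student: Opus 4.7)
The plan is to argue by contraposition: assuming the escaping hypothesis fails, I will produce a continuous $(p,q)$-invariant graph for $R$. Suppose there exists $z_{0} \in \ntorus$ with $R^{k}(z_{0}) \in K_{n} := \bigcup_{i=0}^{n} R^{i}(\Gamma)$ for every $k \in \ZZ$. The orbit closure $Z := \overline{\{R^{k}(z_{0}) : k \in \ZZ\}}$ is then a non-empty, compact, $R$-invariant subset of $K_{n}$, and since each $R^{i}(\Gamma)$ is the graph of a continuous function, every vertical fibre meets $Z$ in at most $n+1$ points. Applying Zorn's lemma, I extract a minimal non-empty $R$-invariant subset $M \subseteq Z$, which still satisfies $|M_{\theta}| \leq n+1$ for every $\theta \in \kreis$.

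It then remains to show that $M$ is itself a continuous $(p,q)$-invariant graph, which contradicts the hypothesis. By the definition recalled in the introduction, this amounts to showing that $p_{1}|_{M} : M \to \kreis$ is a finite covering map, whose degree is $pq$ and whose number of sheets (the connected components of $M$) is $p$. Two facts are needed: (i) $|M_{\theta}|$ is constant on $\kreis$, and (ii) the set-valued map $\theta \mapsto M_{\theta}$ is Hausdorff continuous.

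For (i), set $p = \min_{\theta} |M_{\theta}|$ and $A = \{\theta : |M_{\theta}| = p\}$. Because $R_{\theta}$ restricts to a bijection $M_{\theta} \to M_{\theta+\omega}$, the set $A$ is invariant under $\theta \mapsto \theta + \omega$ and therefore contains a dense orbit. To promote this to $A = \kreis$, I fix $\theta_{0} \in A$ with fibre $\{x_{1}, \dots, x_{p}\}$, choose pairwise disjoint open arcs $V_{1}, \dots, V_{p}$ around the $x_{j}$ together with a small arc $B$ around $\theta_{0}$, and apply Gottschalk's uniform-recurrence theorem to the non-empty open sets $U_{j} := M \cap (B \times V_{j})$: for any $(\theta', x') \in M$, this yields times $k_{1}, \dots, k_{p}$ in a common finite window with $R^{k_{j}}(\theta', x') \in U_{j}$, and the $k_{j}$ are pairwise distinct because the $V_{j}$ are disjoint. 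Using the isometric character of the base rotation to transport this structure uniformly along nearby fibres then forces $|M_{\theta'}| \geq p$ for every $\theta'$, so that $|M_{\theta}| \equiv p$. For (ii), closedness of $M$ already gives upper semi-continuity of $\theta \mapsto M_{\theta}$, and minimality combined with the constant cardinality rules out collapse in the limit, yielding Hausdorff continuity; the connected-component decomposition of $M$ then witnesses the $(p', q')$-invariant graph structure with $p' q' = p$.

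The main obstacle is the passage from density of $A$ to the equality $A = \kreis$ and, more generally, the transition from an abstract minimal set with bounded fibres to a genuine continuous covering structure: the cardinality function $|M_{\cdot}|$ is not semi-continuous in either direction for a general closed subset of $\ntorus$, and both the minimality of $M$ (through Gottschalk's uniform recurrence) and the equicontinuity of the base rotation are essential in order to prevent pathological jumps or collapses of the fibre cardinality at limit points.
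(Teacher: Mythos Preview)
Your overall strategy --- pass to a minimal compact invariant set $M$ inside the finite union of graphs and then argue that $M$ is a continuous $(p,q)$-invariant graph --- is the right one, and it coincides with the paper's approach up to that point. The gap is in your step~(i). You set $p=\min_\theta |M_\theta|$ and then invoke Gottschalk to conclude $|M_{\theta'}|\geq p$ for every $\theta'$; but this inequality is tautological from the definition of $p$. What you actually need is the reverse inequality $|M_{\theta'}|\leq p$, and nothing in your Gottschalk argument (finding $p$ distinct return times from a single point into the $U_j$) addresses that direction. You flag precisely this difficulty yourself in your final paragraph --- the cardinality map $\theta\mapsto |M_\theta|$ is not semicontinuous in either direction --- but the argument as written does not overcome it. Your step~(ii) is also incomplete: upper semicontinuity of $\theta\mapsto M_\theta$ together with constant finite cardinality does not by itself rule out two points of $M_\theta$ collapsing onto one point of $M_{\theta_0}$ while another point of $M_{\theta_0}$ is not approached at all.

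The paper avoids the whole cardinality discussion. Having chosen a minimal $K$ inside $\bigcup_{i=0}^n R^i(\Gamma)$, it writes $K=\bigcup_i\bigl(K\cap R^i(\Gamma)\bigr)$ as a finite union of closed sets projecting onto the circle and applies Baire to find an $i$ for which $p_1(K\cap R^i(\Gamma))$ has non-empty interior. Restricting further gives a graph segment which is \emph{relatively open} in $K$. Minimality then forces every point of $K$ to have an iterate landing in this open graph segment, so $K$ is a $1$-manifold on which $p_1$ is a local homeomorphism, hence a finite covering of $\kreis$. This Baire-plus-minimality route sidesteps entirely the semicontinuity obstacle you identified.
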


\begin{proof}
If the escaping hypothesis is not satisfied, then 
 there exists a  invariant compact set $K$ included in the union $\hat K$ of a finite number of iterates of some graph $\Gamma$.
 Let us choose a minimal such $K$. Then the compact set $K$ is a continuous $(p,q)$ invariant graph contradicting the assumption.
 Indeed,
since $K$ is compact and invariant, $p_{1}(K)$ is the whole circle ; thus there is some $k$ such that the projection of $R^k(\Gamma) \cap K$ has non-empty interior
(Baire theorem), so that $K$ contains some graph $\alpha$ over some interval.
By taking a smaller open interval, we find a graph over an open interval which is an open set of $K$. By minimality, $K$ is a one-dimensional topological manifold,
thus a union of simple curves. The same argument shows that $p_{1}$ is a local homeomorphism on $K$,
and thus a covering map (by compactness). 
 \end{proof}
\subsubsection*{Proof of proposition~\ref{prop.construction-curve-recurrence}}

The proposition is obtained by applying inductively the following lemma.
This lemma roughly says that if $R^n(\Gamma)$ has flat intersections with $\Gamma$
 outside some closed subset $F$, then we can construct $\Gammatil$ such that  
 $R^n(\Gammatil)$ has flat intersections with $\Gammatil$
 outside a closed subset $F'$, where $F'$ is  substantially smaller than $F$.
Remember that the map $T$ and the function $\ell$ have been defined on section~\ref{sss.returning}; we will denote by $T'$ and $\ell'$ 
the corresponding objects with respect to the graph $\Gamma'$.

\begin{lemm}\label{lemma.zigotto}
Let $\Gamma$ be a continuous graph that has flat intersections with $R^k(\Gamma)$ for
$ 1 \leq k \leq n-1$.
Let $F$ be a non-empty closed  set  which is a union of $T$-orbits.
Suppose that 
$\left( \Gamma \cap R^{n}(\Gamma)\right)  \setminus F$
has a finite number of connected components, none of which is a single point.

Let $\varepsilon_{0} >0$. Then there exists an $\varepsilon_{0}$-perturbation 
$\Gamma'$ of $\Gamma$ supported on an arbitrarily small neighbourhood of
$F$, and there exists a  non-empty closed  set  $F' \subset F$  which is a union of $T$-orbits
such that
\begin{enumerate}
\item $\left( \Gamma' \cap R^{n}(\Gamma')\right)  \setminus F'$
has a finite number of connected components, none of which is a single point.

\item For $1 \leq k \leq n-1$,  the graph $\Gamma'$ has flat intersections with $R^k(\Gamma')$, moreover 
the set $p_1(\Gamma' \cap R^k\Gamma')$ contains $p_1(\Gamma \cap R^k\Gamma)$ and has the same number of connected components.

\item Either the set $F'$ is empty, or  the supremum of the function $\ell'$ on $F'$ is strictly less than the supremum of $\ell$ on $F$.
\end{enumerate}
\end{lemm}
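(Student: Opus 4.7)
The plan is to decrease $\sup_F\ell$ by at least one unit in a single application of the perturbation lemma, done simultaneously at every $T$-orbit of $F$ realising this supremum.

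\textbf{Top layer.} By lemma \ref{l.escaping} the escaping hypothesis holds, so $\ell<\infty$ pointwise. Because ``$R^k(z)\in\Gamma$'' is a closed condition in $z$, the function $\ell$ is upper semi-continuous on $\Gamma$; moreover $\ell$ is constant along each $T$-orbit. Therefore on the compact set $F$ the supremum $M:=\sup_F\ell$ is attained, and $F_M:=\{z\in F:\ell(z)=M\}$ is a closed union of $T$-orbits.

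\textbf{Covering and perturbation.} By lemma \ref{lemma.exists-perturbation-boxes} each $z\in F_M$ admits arbitrarily small perturbation boxes with itinerary matching that of $z$; by compactness, cover $F_M$ by finitely many such boxes $B_1,\dots,B_N$. Shrinking them appropriately, arrange that (i) the combined $C^0$-effect is below $\varepsilon_{0}$, (ii) the union of supports $\bigcup_{i,j}R^{q_i^{(j)}}(B_j)$ lies in an arbitrarily small prescribed neighbourhood of $F_M$, (iii) the supports are pairwise disjoint and disjoint from the finitely many components of $(\Gamma\cap R^n\Gamma)\setminus F$ as well as from any singletons of $\Gamma\cap R^n\Gamma$ lying outside $F_M$, and (iv) by remark \ref{r.perturbation}, the modified graph may be forced to pass through any prescribed finite set of pre-existing points of $F$ lying in the boxes. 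Applying lemma \ref{l.perturbation} in turn at each $B_j$ yields the desired $\Gamma'$. For $1\le k\le n-1$, item 2 of the perturbation lemma gives $X'_k=X_k\cup\bigcup J_{i,k}^{(j)}$ where each $J_{i,k}^{(j)}$ is a non-trivial interval adjoined to an existing component of $X_k$; consequently flat intersections for $k<n$ survive and the component count of $X'_k$ equals that of $X_k$, which is conclusion 2.

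\textbf{The new bad set $F'$.} Let $F'$ be the union of the $T'$-orbits of the singleton components of $\Gamma'\cap R^n(\Gamma')$. Two observations from the proof of lemma \ref{l.perturbation} are key. Inside any support, the modified graphs $R^{-q_i}\Gamma'|_I$ all share the horizontal segment $[\theta,\theta+\lambda]\times\{z_x\}$, so any two of them either coincide on a non-trivial flat interval or are disjoint on $[\theta+\lambda,\theta+\delta]$; in particular \emph{no} new singleton component of $\Gamma'\cap R^n\Gamma'$ can be created inside the supports. Outside the supports $\Gamma'=\Gamma$ and $R^n\Gamma'=R^n\Gamma$, so the singletons of $\Gamma'\cap R^n\Gamma'$ there are exactly the singletons of $\Gamma\cap R^n\Gamma$ outside the supports. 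By hypothesis these all lie in $F$, and those that were in $F_M$ have been absorbed by the non-trivial intervals $J_{i,n}^{(j)}$, so only singletons within $F\setminus F_M$ remain. Using (iv) and the small choice of supports, the $T'$-orbits of these surviving singletons coincide with their $T$-orbits and stay in $F\setminus F_M$, which gives $F'\subseteq F\setminus F_M$, $\ell'=\ell$ on $F'$, and hence either $F'=\emptyset$ or $\sup_{F'}\ell'\le M-1<M$. This establishes conclusions 1 and 3.

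\textbf{Main obstacle.} The most delicate step is the choice of perturbation boxes: since $F\setminus F_M$ may accumulate on $F_M$, the supports cannot simply be made disjoint from every $T$-orbit of $F\setminus F_M$. One has to combine the structural description of the perturbation inside each box (no new singleton intersections), remark \ref{r.perturbation} (to preserve prescribed points of $F$ on the perturbed graph), the upper semi-continuity of $\ell$, and sufficient $C^0$-smallness of the perturbation, in order to guarantee that no orbit of $F\setminus F_M$ gets pushed up to level $M$ and no new singleton of $\Gamma'\cap R^n\Gamma'$ appears outside $F'$.
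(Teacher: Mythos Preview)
Your overall strategy (isolate the top layer $F_M$, cover it by perturbation boxes, apply lemma~\ref{l.perturbation}) matches the paper's, but your choice of $F'$ creates a genuine gap, and your ``main obstacle'' paragraph acknowledges the difficulty without resolving it.

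\textbf{The set $F'$ is too small.} You take $F'$ to be the union of $T'$-orbits of the \emph{singleton} components of $\Gamma'\cap R^n(\Gamma')$. But the hypothesis only controls $(\Gamma\cap R^n\Gamma)\setminus F$; inside $F$ the set $\Gamma\cap R^n\Gamma$ may have infinitely many components, including infinitely many non-trivial intervals. Outside the supports nothing changes, so these intervals persist in $\Gamma'\cap R^n\Gamma'$, they are not singletons, hence they are not contained in your $F'$. Consequently $(\Gamma'\cap R^n\Gamma')\setminus F'$ need not have finitely many components, and conclusion~1 fails.

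\textbf{The inclusion $F'\subseteq F$ is not established.} Remark~\ref{r.perturbation} lets you force $\Gamma'$ through a \emph{finite} set of prescribed points, but there can be infinitely many singletons in $F\setminus F_M$ whose $T$-orbits cross the supports. For such a point $z$, an iterate $R^k(z)$ may lie on the new straightened piece of $\Gamma'$ without ever having been on $\Gamma$; then $T'(z)\neq T(z)$ and the $T'$-orbit of $z$ can contain points outside $F$. Neither closedness of $F'$ nor $F'\subseteq F$ follows.

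\textbf{The paper's fix.} Instead of building $F'$ from the singletons, the paper sets
\[
Z \ := \ \bigcup_{B\in\mathcal{B}}\ \bigcup_{q\in N(B)} p_1\bigl(R^q(B)\bigr)\times\kreis,
\qquad
F' \ := \ F\setminus\inte(Z).
\]
This is automatically closed and contained in $F$. Because each box is chosen with itinerary equal to that of the $F_M$-points it covers (which requires first partitioning $F_M$ by itinerary, a step you skip), the whole $T$-orbit of any point in a box stays inside $Z$; hence $T=T'$ on $F'$, and $F'$ is a union of $T'$-orbits. Conclusion~1 is then checked by a clean dichotomy: on $Z^c$ one has $(\Gamma'\cap R^n\Gamma')\setminus F'=((\Gamma\cap R^n\Gamma)\setminus F)\cap Z^c$, a finite union of non-trivial arcs by hypothesis; on $Z$ the perturbation lemma gives finitely many non-trivial intervals, and $F'\cap Z$ is contained in the finite boundary $\partial Z$. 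For conclusion~3, $F_M\subset\inte(Z)$ gives $F'\cap F_M=\emptyset$, and $\ell'=\ell$ on $F'$ since $T'=T$ there.
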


\begin{proof}[Proof of the lemma]
We assume the hypotheses of the lemma.
If $\ell(z)=0$ for every $z \in F$, then $R^n(\Gamma) \cap F = \emptyset$,
and the graph $\Gamma'=\Gamma$ together with its closed subset $F'=\emptyset$
satisfies the conclusion of the lemma.
>From now on we assume that $\sup_{F}\ell >0$.

Let 
$$
M=\{ z \in F, \ \ \ \ell(z) = \sup_{F}\ell \}.
$$
The set $M$ is obviously a non-empty union of  $T$-orbits.
\begin{claim}
The set $M$ is closed.
\end{claim}
To prove the claim we consider the map $z \mapsto N(z)$
which associates to each point of $\Gamma$ its itinerary (see section~\ref{sss.returning}).
Let $z \in \Gamma$; by definition of the itinerary, 
the points $f^k(z)$ are outside $\Gamma$ for every $k \in \{q_{-r(z)}-n, \dots , q_{s(z)}+n\} \setminus N(z)$.
We note that  these conditions are open, 
so  we have $N(z') \subset N(z)$ for all $z'$ in a neighbourhood of $z$
(in other words,   the map $z \to N(z)$ is semi-continuous).
The claim follows.
 
 We consider the finite partition $\cal P$ of $M$ induced by $N$:
 two points $y$ and $z$ are in the same element $P$ of $\cal P$ 
 if and only if $N(y) = N(z)$. Since the functions $N$ and $\ell$ are constant on $P$ we use the notation $N(P)$ and $\ell(P)$.
 The semi-continuity of $N$ and the maximality of $\ell$ on $M$ also entail
  that the elements of this partition are closed sets.
  Let ${\cal P}_{0}$ be the family of $P \in \cal P$ whose points $z$ satisfy $r(z)=0$.
Then the partition $\cal P$ can be written as
\begin{equation*}
{\cal P} = \{ T^k (P), P\in {\cal P}_{0} \mbox{ and }  0 \leq k \leq \ell(P)    \}
 = \{ R^{q} (P), P \in {\cal P}_{0} \mbox{ and }  q\in N(P)    \}.
 \end{equation*}

\begin{claim}
Let $P \in {\cal P}_{0}$. 
There exists a finite collection ${\cal B}({P})$ of arbitrarily small perturbation boxes  such that
\begin{enumerate}
\item each box $B$ has the same itinerary $N(B)$ as the points of $P$,
\item 
the projections $p_{1}(B)$ of the boxes have pairwise disjoint interior,
\item $p_{1}(P)$ is contained in the interior of the union of all the $p_{1}(B)$.
\end{enumerate}
\end{claim}
\begin{proof}[Proof of the claim]
Let $z$ be any point of $P$.
By lemma~\ref{lemma.exists-perturbation-boxes},
we can find two arbitrarily small perturbation boxes $B^-(z) = I^-(z) \times J^-(z), B^+(z) = I^+(z) \times J^+(z)$ whose itineraries coincide with the itinerary of $z$, and such that $p_{1}(z)$ is the right end-point of $I^-(z)$ and the left end-point of $I^+(z)$.
Since $p(P)$ is compact it is covered by the interior of finite number of intervals $I^-(z)\cup I^+(z)$.
Thus we find a finite collection ${\cal B}^*({P})$ of perturbation boxes having properties 1 and 3 of the claim but maybe not property 2.
Then property 2 will be achieved by replacing some of the intervals $I^{\pm}(z)$ by smaller subintervals. For this we first make the following remark. 
Let $z=(\theta_{0},x) \in P$, and $\theta_{1} \in \inte (I^-(z)) = (\theta_{0}-\delta,\theta_{0})$. Then 
\begin{itemize}
\item  $[\theta_{1},\theta_{0}] \times J^-(z)$ is a perturbation box,
\item if $\theta_{1} \in p_{1}(P)$ then $[\theta_{0}-\delta ,\theta_{1}] \times J^-(z)$ is a perturbation box,
\end{itemize}
and in both cases the new box has the same itinerary as the old one.
Now consider any couple of perturbation boxes $B_{1},B_{2} \in {\cal B}^*({P})$ whose interiors are not disjoint.
If the $p_{1}$-projection of one box contains the other one then we can just eliminate the smallest one. In the opposite case, the above remark allows us to replace one of the two boxes, say $B_{1}$, by a smaller perturbation box $B'_{1}$ whose projection by $p_{1}$ is disjoint from $B_{2}$
and such that the $p_{1}$-projection of $B'_{1} \cup B_{2}$ equals the $p_{1}$-projection of $B_{1} \cup B_{2}$.  
Thus, by considering one by one all the couples of boxes in ${\cal B}^*({P})$,
 we can construct a new collection ${\cal B}({P})$ having the wanted properties.
\end{proof}

Let ${\cal B} = \cup {\cal B}({P})$ be the family of all the constructed boxes.
Since the partition $\cal P$ of $M$
consists of disjoint closed sets, each $P \in {\cal P}_{0}$ is contained in an open set  $U(P)$ such that the collection 
$$
\{ R^{q} (U(P)), P \in {\cal P}_{0} \mbox{ and }  q\in N(P)    \} 
$$
still consists of pairwise disjoint sets.
Thus items 2 and 3 of the claim allows us to choose the families ${\cal B}({P})$ of perturbation boxes  such that the elements of the following  family have disjoint interior:
$$
 \{ p_{1}(B)+q\omega, B \in {\cal B}    \mbox{ and }  q \in N(B)   \}.
$$
Thus we can apply the perturbation lemma~\ref{l.perturbation} independently on each box of the family $\cal B$,
and denote by $\Gamma'$ the resulting graph.
The graph $\Gamma$ hence has been modified only in the domain
$$
Z =  \bigcup_{\stackrel{B \in {\cal B}}{q \in N(B)}}  p_{1}(R^q(B))\times \kreis.
$$
We define  the set $F'$ by
$F' = F \setminus \inte (Z)$ and we now check the properties.

The set $F'$ is clearly a closed subset of $F$.
For any $B \in {\cal B}$, and any $z \in B$, the itinerary of $z$ is a subset of the itinerary of $B$.
Thus the full $T$-orbit of $z$ is included in the union $\bigcup_{q \in N(B)}  R^q(B)$.
Also note that the maps $T$ and $T'$ coincide outside $\inte(Z)$, and thus on $F'$. Consequently, 
$F'$ is a union of non-trivial  $T'$-orbits.

Item 2 is obvious (\emph{cf} property 2 of the perturbation lemma~\ref{l.perturbation}).

Let us check item 1.
We first analyse the set $(\Gamma' \cap R^n( \Gamma') ) \setminus F' $
in restriction to the complementary set of $Z$. We have
$$
\left( (\Gamma' \cap R^n(\Gamma') ) \setminus F'  \right )  \setminus Z=
\left( ( \Gamma \cap R^n( \Gamma) ) \setminus F  \right )  \setminus Z.
$$
The right-hand set is the restriction of $( \Gamma \cap R^n( \Gamma) ) \setminus F$
(the union of finitely many non-trivial closed graphs) above the union of finitely many open intervals;
thus it is the union of a finite number of non trivial intervals.
Then we analyse the set $(\Gamma' \cap R^n( \Gamma') ) \setminus F' $ in restriction to $Z$. According to lemma~\ref{l.perturbation}, 
$\Gamma' \cap R^n(\Gamma') \cap Z$ has a finite number of connected components, all of them non-trivial. Furthermore, by definition of $F'$, the set $F' \cap Z$ is included in the boundary of $Z$ and is finite.
Thus $((\Gamma' \cap R^n(\Gamma') ) \setminus F' ) \cap Z$ is again the union of a finite number of non trivial intervals.
Putting everything together, we get item 1.

Finally for item 3 we note that $M \subset \inte(Z)$ and thus $F'$ does not meet $M$.
Since the maps $T$ and $T'$ coincide on $F'$, and according to the definition of the set $M$, we have 
$$
 \sup_{F'}\ell' =  \sup_{F'} \ell  <  \sup_{F}\ell \ .
$$
This completes the proof.
\end{proof}

\begin{proof}[Proof of  proposition~\ref{prop.construction-curve-recurrence} from lemma~\ref{lemma.zigotto}]
Let $\Gamma$ and $\varepsilon$ be as in the hypotheses of the proposition.
We fix once and for all the value $\varepsilon_{0} = \frac{\varepsilon}{\sup_{\Gamma}\ell +1}$.
We set $\Gamma_{0}= \Gamma$ and $F_{0}= \Gamma$.
Then $\Gamma_{0}$ and $F_{0}$ satisfy the hypotheses of the lemma.
Applying the lemma provides a new graph $\Gamma_{1} = \Gamma'$ with a closed subset $F_{1} = F'$.
If $F'$ is empty, then we define $\Gammatil=\Gamma_{1}$ and note that $\Gammatil$
satisfies the conclusion of proposition~\ref{prop.construction-curve-recurrence}.

In the opposite case, $\Gamma_{1}$ and $F_{1}$ again satisfy the hypotheses of the lemma.
Applying the lemma recursively provides  sequences $(\Gamma_{p})$ and $(F_{p})$.
Since the inequality   $\sup_{F_{p+1}}\ell_{p+1} < \sup_{F_{p}}\ell_{p}$ holds, there exists some $p$ with $F_{p}= \emptyset$,
and then we can define the graph $\Gammatil = \Gamma_{p}$.
Note that $p\varepsilon_{0} < \varepsilon$, so that points 1 and 2 of the proposition concerning the size of the perturbation hold.
This completes the proof of the proposition.
\end{proof}

\subsubsection*{Proof of  addendum~\ref{addendum}}\label{subsubsection.proof-addendum}

Addendum~\ref{addendum} requires the following additional property for the graph $\Gamma$:
for any small pieces $\Gamma_{\mid I}$, $\Gamma_{\mid J}$, there exists some positive iterate of the first one that crosses the second one. In order to get this additional property, we will refine the construction of the sequence $(\Gamma_{n})$, by inserting between two successive steps of the construction  another small modification. 
The modification between steps $n$ and $n+1$ will achieve the wanted property concerning two specific intervals $I_{n},J_{n}$, while keeping the previously obtained properties of the graph $\Gamma_{n}$. We will get the modification by applying the following proposition.

\begin{prop}\label{prop.transitivity}
Assume that $R$ is topologically transitive.
Let $\Gamma$ be a continuous graph that has flat intersections with $R^k(\Gamma)$ for
$ 1 \leq k \leq n-1$.

Let $\varepsilon>0$, and $I,J \subset \kreis$ be  two non-trivial intervals.
Then there exists  a Graph $\Gammatil$ such that
\begin{enumerate}
\item $\Gammatil$ is an $\varepsilon$ modification of $\Gamma$ over a set of measure less than $\varepsilon$;
\item there exists some integer $m >0$ such that $\Gamma$ and $R^m(\Gamma)$ crosses over $I\cap (J + m \omega)$;
\item for $1 \leq k \leq n-1$,  the graph $\Gammatil$ has flat intersections with $R^k(\Gammatil)$, moreover 
the set $p_1(\Gammatil \cap R^k\Gammatil)$ contains $p_1(\Gamma \cap R^k\Gamma)$ and has the same number of connected components.
\end{enumerate}
\end{prop}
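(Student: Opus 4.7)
The plan is to apply the perturbation machinery of Section~\ref{ss.perturbation} to a single small perturbation box $B$ based on a short sub-interval of $J$, producing $\Gammatil$ which differs from $\Gamma$ only on a tiny set and which has, for some $m>0$, a genuine transversal crossing with $R^{m}(\Gammatil)$ above a sub-interval of $I\cap(J+m\omega)$. Topological transitivity of $R$ supplies $m$ along with a point near $\Gamma_{\mid J}$ whose $m$-th iterate lies near $\Gamma_{\mid I}$, and Remark~\ref{r.perturbation} lets us force $\Gammatil$ to pass through two prescribed interior points of $B$ whose $R^{m}$-images lie on opposite sides of $\Gamma_{\mid I}$.

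To choose $m$ and $\theta_{0}$, fix interior points $\theta^{*}\in J$ and $\theta'\in I$. For any $\kappa>0$, topological transitivity of $R$ yields $m>0$ and a point $(\theta_{0},x_{0})$ with $d((\theta_{0},x_{0}),(\theta^{*},\gamma(\theta^{*})))<\kappa$ and $d(R^{m}(\theta_{0},x_{0}),(\theta',\gamma(\theta')))<\kappa$; for $\kappa$ small, $\theta_{0}\in J$ and $\theta_{0}+m\omega\in I$. Set $\tilde{x}_{0}:=(R^{m}_{\theta_{0}})^{-1}(\gamma(\theta_{0}+m\omega))$, so that $R^{m}_{\theta_{0}}(\tilde{x}_{0})=\gamma(\theta_{0}+m\omega)$; continuity gives $|\tilde{x}_{0}-\gamma(\theta_{0})|\to 0$ as $\kappa\to 0$. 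The closed set $E_{m}:=\{\theta\in\kreis : R^{m}_{\theta}(\gamma(\theta))=\gamma(\theta+m\omega)\}$ equals $\kreis$ precisely when $R^{m}(\Gamma)=\Gamma$; the no-invariant-graph hypothesis on $R$ forbids $E_{m}=\kreis$ for all $m>0$ simultaneously, as otherwise $\Gamma$ would itself be $R$-invariant. We may therefore select $m$ with $E_{m}\subsetneq\kreis$ and, by openness of $\kreis\setminus E_{m}$, adjust $\theta_{0}$ to avoid $E_{m}$, thereby also ensuring $\tilde{x}_{0}\neq\gamma(\theta_{0})$.

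By Lemma~\ref{lemma.exists-perturbation-boxes}, we pick a perturbation box $B=[\theta_{0},\theta_{0}+\delta]\times[\gamma(\theta_{0})-\eta,\gamma(\theta_{0})+\eta]$ with $\delta,\eta>0$ chosen so that: (i) the eventual modification has $C^{0}$-norm and support measure below $\varepsilon$; (ii) $I^{*}:=[\theta_{0},\theta_{0}+\delta]\subset J$ and $I^{*}+m\omega\subset I$; (iii) $[\gamma(\theta_{0})-\eta,\gamma(\theta_{0})+\eta]$ strictly contains $\tilde{x}_{0}\pm\rho$ for some $\rho>0$; (iv) $I^{*}\cap E_{m}=\emptyset$ (achievable by continuity from $\theta_{0}\notin E_{m}$). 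Condition (iv) ensures that $R^{m}(\Gamma_{\mid I^{*}})\cap\Gamma=\emptyset$, so $m$ does not belong to the itinerary of $I^{*}$, and Lemma~\ref{l.perturbation} therefore leaves $\gamma$ unchanged over $I^{*}+m\omega$. Using Remark~\ref{r.perturbation}, we build $\Gammatil$ passing through the two interior points $(\theta^{(1)},\tilde{x}_{0}-\rho)$ and $(\theta^{(2)},\tilde{x}_{0}+\rho)$ of $B$, with $\theta_{0}<\theta^{(1)}<\theta^{(2)}<\theta_{0}+\delta$.

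It remains to verify the crossing. Since $\gammatil=\gamma$ over $I^{*}+m\omega$, the heights of $R^{m}(\Gammatil)$ and $\Gammatil$ at $\theta^{(i)}+m\omega$ equal $R^{m}_{\theta^{(i)}}(\tilde{x}_{0}\mp\rho)$ and $\gamma(\theta^{(i)}+m\omega)$ respectively. Monotonicity of the fibre maps $R^{m}_{\theta^{(i)}}$, continuity in $\theta$, and the identity $R^{m}_{\theta_{0}}(\tilde{x}_{0})=\gamma(\theta_{0}+m\omega)$ imply, for $\delta$ small enough relative to $\rho$, the strict inequalities $R^{m}_{\theta^{(1)}}(\tilde{x}_{0}-\rho)<\gamma(\theta^{(1)}+m\omega)$ and $R^{m}_{\theta^{(2)}}(\tilde{x}_{0}+\rho)>\gamma(\theta^{(2)}+m\omega)$. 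Hence $R^{m}(\Gammatil)$ passes below $\Gammatil$ at $\theta^{(1)}+m\omega$ and above at $\theta^{(2)}+m\omega$; choosing a short open arc $O\subsetneq\kreis$ containing the values of both graphs over $I':=[\theta^{(1)}+m\omega,\theta^{(2)}+m\omega]\subset I\cap(J+m\omega)$ exhibits the crossing required in item 2 of Definition~\ref{def:flatintersections}. Items 1 and 3 of the proposition follow immediately from items 1 and 2 of Lemma~\ref{l.perturbation}. The main technical difficulty is the joint balancing of the parameters $\kappa,\eta,\delta,\rho$ together with the avoidance of the exceptional set $E_{m}$: without the latter, the perturbation over $J$ would also modify $\gamma$ above $I^{*}+m\omega$ and could cancel the intended crossing.
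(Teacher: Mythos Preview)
Your single-box strategy is genuinely different from the paper's two-box approach, and the difference matters. The paper shrinks $I$ and $J$ so that each carries a perturbation box $B_I$, $B_J$ whose full itinerary-orbits are disjoint; it then modifies $\Gamma$ \emph{twice}, once in $B_I$ (to force $\bar\Gamma$ through a point $z$ with dense forward orbit) and once in $B_J$ (to force $\tilde\Gamma$ through two points on opposite sides of $R^m(\bar\Gamma)$). Because the two modification supports are disjoint by construction, the second perturbation cannot undo the first, and no hypothesis on $m$ relative to any itinerary is needed. Your approach, by contrast, uses a single box $B\subset J\times\kreis$ and must guarantee that the perturbation leaves $\gamma$ untouched over $I^*+m\omega$; this is where your condition (iv) enters.

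The gap is in establishing (iv). Transitivity hands you a specific pair $(m,\theta_0)$; you then assert that one can ``adjust $\theta_0$ to avoid $E_m$'' because $\kreis\setminus E_m$ is open and non-empty. But the constraint $\theta_0\in J\cap(I-m\omega)$ confines $\theta_0$ to a fixed small interval, and nothing prevents $E_m$ from containing that entire interval (indeed $E_m=p_1(\Gamma\cap R^{-m}\Gamma)$ can perfectly well contain intervals). So the adjustment may be impossible. Your justification that ``$E_m=\kreis$ for all $m$'' would make $\Gamma$ invariant is also off: $E_m=\kreis$ for a single $m$ only gives $R^m(\Gamma)=\Gamma$, and you need Lemma~\ref{l.escaping}, not invariance of $\Gamma$, to rule this out.

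Your approach is salvageable, but not as written: take a point $(\theta_0,x_0)$ with dense forward orbit near $(\theta^*,\gamma(\theta^*))$, let $\{q_{-r},\dots,q_s\}$ be the (finite) itinerary of $z=(\theta_0,\gamma(\theta_0))$, and then choose $m>q_s$ among the infinitely many return times of the orbit to a neighbourhood of $(\theta',\gamma(\theta'))$. This guarantees $m$ is outside the itinerary without invoking $E_m$ at all, and for $\delta$ small (now chosen after $m$) the interval $I^*+m\omega$ is disjoint from the modification support. You would still need to check that $\tilde{x}_0$ lands inside the box, which requires controlling $|x_0-\tilde{x}_0|$ via the modulus of continuity of $(R^m_{\theta_0})^{-1}$; this introduces an $m$-dependent smallness requirement on the target neighbourhood that must be handled. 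The paper's two-box argument sidesteps all of this bookkeeping.
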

\begin{proof}[Proof of proposition~\ref{prop.transitivity}]
We first note that, up to replacing $I$ and $J$  by subintervals, we can assume the following additional properties:
\begin{itemize}
\item $I$ and $J$ have finite itineraries $\cal I$ and $\cal J$;
\item there exists vertical intervals $I',J' \subset \kreis$ such that the rectangles $B_{I}=I \times I'$ and  $B_{J}=J \times J'$ are perturbation boxes;
\item any two intervals $I + k \omega, k \in {\cal I}$ and  $J+\ell \omega, \ell \in {\cal J}$ are disjoint.
\end{itemize}

Since $R$ is topologically transitive, there exists a point $z \in \inte(B)$ with a dense forward orbit.
Now we apply the perturbation lemma~\ref{l.perturbation} with the perturbation box $B_I$
to construct a first  $\varepsilon$-modification  $\bar \Gamma$ of $\Gamma$ that contains $z$ (this is possible thanks to remark~\ref{r.perturbation}). Note that the perturbation is supported on the iterates of $B_{I}$ corresponding to the itinerary of $B_{I}$, and thus $\bar \Gamma \cap B_{J} = \Gamma \cap B_{J}$.

By hypothesis on $z$ there is an iterate $R^m(z)$ with positive $m$ belonging
to $\inte(B_{J})$.  Let $z_{1},z_{2}$ be two points of $\inte(B_{J} \cap (
(I + m \omega) \times \kreis ))$ that are separated in $B_{J} \cap ( (I + m \omega) \times
\kreis )$ by $R^m(\bar \Gamma)$. Now we perform a second modification of
$\Gamma$, this time using the box $B_{J}$, to construct a new graph $\tilde
\Gamma$ that contains both points $z_{1}, z_{2}$. Since $\tilde \Gamma \cap
B_{I} = \bar \Gamma \cap B_{I}$, it follows that $R^m(\tilde\Gamma)_{(I + m \omega)} =
R^m(\bar \Gamma)_{(I + m \omega)}$: thus $z_{1}$ and $z_{2}$ are still separated in
$B_{J} \cap ( (I + m \omega) \times \kreis )$ by $R^m(\tilde\Gamma)$, that is,
$\tilde \Gamma$ and $R^m(\tilde \Gamma)$ cross over $(I + m \omega) \cap J$.
\end{proof}

\begin{proof}[Proof of addendum~\ref{addendum} using proposition~\ref{prop.transitivity}]
We explain how to modify the inductive proof of proposition~\ref{prop.construction-curve-recurrence} given at the beginning of section~\ref{section.curve} so that the addendum is satisfied.

Let $(I_{i})$ be a countable basis for the topology of $\kreis$ which consists of  intervals. Let
$(i_{n},j_{n})_{n \geq 1}$ be an enumeration of $\N^2$, so that $(I_{i_{n}} \times I_{j_{n}})_{n \geq 1}$ is a basis for $\kreis \times \kreis$.

Let $n \geq 1$, and assume inductively that there exists a continuous graph
$\Gamma_{n-1}$ which has flat intersections with its iterates
$R(\Gamma_{n-1}), \dots , R^{n-1}(\Gamma_{n-1})$. We first mimic the
construction of proposition~\ref{prop.construction-curve-recurrence} to get a
continuous graph which we denote by $\Gamma'_{n}$; in particular,
$\Gamma'_{n}$ is an $\varepsilon_{n}$-modification of $\Gamma_{n-1}$ and has
flat intersection with its $n$ first iterates.

Thus we can apply proposition~\ref{prop.transitivity} to this graph
$\Gamma'_{n}$, with intervals $I=I_{i_{n}}$ and $J=I_{j_{n}}$. We get again a
continuous graph $\Gamma_{n}$ wich is an $\varepsilon'_{n}$-modification of
$\Gamma'_{n}$ (property 1).  This new graph still has flat intersections with
its $n$ first iterates (property 3). Furthermore, it satisfies the following
additional property (property 2): there exists some integer $m >0$ such that
$\Gamma_{n}$ and $R^m(\Gamma_{n})$ crosses over the interval $I_{i_{n}} \cap
(I_{j_{n}} + m \omega )$.

We note that this last property is an open property among continuous graphs
 for the ${\cal C}^0$ distance. Thus, by choosing the sequences
 $(\varepsilon_{n})$ and $(\varepsilon'_{n})$ to decrease sufficiently fast,
 we can ensure that the limit graph $\Gamma$ also satisfies this property :
 for each integer $n\geq 1$ there exists an integer $m >0$ such that $\Gamma$
 and $R^m(\Gamma)$ crosses over the interval $I_{i_{n}} \cap (I_{j_{n}} + m
 \omega )$.  In particular, $\Gamma$ meets the property required by the
 addendum. Furthermore, the argument of
 proposition~\ref{prop.construction-curve-recurrence} showing the flat
 intersection of $\Gamma$ with $R^n(\Gamma)$ for all $n$ is still valid. This
 completes the proof of the addendum.
\end{proof}


\section{Blowing up the orbit of $\Gamma$} \label{Blow-up-curve}
In this section, we consider a quasiperiodically forced circle homeomorphism
$R$ and a continuous graph $\Gamma$ which has flat intersections with all its
iterates $R^{n}(\Gamma) \ (n\in\Z)$. We denote the fibres of $p_1$ by
$\TT^1_{\theta}=\{\theta\} \times \TT^1$.

Let us first recall that every probability measure $\mu$ on $\TT^2$ can be
disintegrated with respect to fibres of the projection $p_1:(\theta,x)\mapsto
\theta$. More precisely, one can find a family $(\mu_\theta)_{\theta\in\TT^1}$
of probability measures on the circle $\TT^1$ such that, for every measurable
set $A$,
\begin{equation}
\mu(A)\ = \ \int_{\TT^1}\mu_\theta(A_\theta) d\theta \ .
\end{equation}
Here we use the notation
$$
A_\theta=\{x\in\TT^1\mid (\theta,x)\in A\} \ ,
$$
where $A$ is any subset of \ntorus.
The purpose of this section is to prove the following.
\begin{prop}
\label{p.blow-up-curve}
Let $R$ be a qpf circle homeomorphism and suppose there exists a curve
$\Gamma$ which satisfies the assertions of
proposition~\ref{prop.construction-curve}. Define
\begin{equation}
\label{e.xi}
\Xi \ := \ \bigcup_{n\in\Z} R^n(\Gamma)\ .
\end{equation}
Then there exist a continuous onto map $\pi$ and a homeomorphism $f$,
$$
\begin{array}{lcr}
\begin{array}{rrclc}
\pi  : &  \TT^2 & \longrightarrow & \TT^2\\
 & (\theta,x) & \longmapsto & (\theta, \pi_\theta(x)) 
\end{array}
&
 \mbox{ and } 
&
\begin{array}{crrcl}
&   f   :  & \TT^2 & \longrightarrow & \TT^2\\
 & & (\theta,x) & \longmapsto & (\theta+\omega, f_\theta(x)) 
\end{array}
\end{array}
$$
with the following properties:
\romanlist
\item for all $\theta \in \kreis$ the map $\pi_\theta$ is increasing;
\item if $\displaystyle (\theta,x)\in \Xi$
  then $\pi^{-1}(\theta,x)$ is a non-trivial interval in the circle
  $\TT^1_\theta$;
\item if $\displaystyle (\theta,x)\notin \Xi$
  then $\pi^{-1}(\theta,x)$ is a single point;
\item $\pi\circ f=R\circ \pi$.
\item $\pi^{-1}(\Xi)$ has non-empty interior.
 \listend
\end{prop}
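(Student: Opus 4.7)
The plan is to perform a fibered Denjoy-style blow-up of the $R$-orbit $\Xi$ of $\Gamma$. Write $\Gamma_n := R^n(\Gamma)$ and let $\gamma_n : \T^1 \to \T^1$ be the continuous function whose graph is $\Gamma_n$; equivariance gives $R_\theta(\gamma_n(\theta)) = \gamma_{n+1}(\theta+\omega)$. Fix positive weights $(\ell_n)_{n \in \Z}$ with $S := \sum_n \ell_n < \infty$, for instance $\ell_n = 2^{-|n|-2}$; each blow-up interval will have length $\ell_n/(1+S)$. For each $\theta$, introduce the probability measure
$$
\mu_\theta \ = \ \tfrac{1}{1+S}\Bigl(\Leb + \sum_{n \in \Z} \ell_n \delta_{\gamma_n(\theta)}\Bigr)
$$
on $\T^1$, and define $\pi_\theta : \T^1 \to \T^1$ as the continuous, weakly increasing, degree-one map obtained on the universal cover $\R$ as the generalised inverse of the non-decreasing function whose continuous part has slope $1/(1+S)$ and which jumps by $\ell_n/(1+S)$ at each lift of $\gamma_n(\theta)$. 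Then $\pi_\theta^{-1}(\gamma_n(\theta))$ is an interval of length $\ell_n/(1+S)$ (or a combined length when several $\gamma_n(\theta)$ coincide), and $\pi_\theta$ is strictly increasing off these atoms, giving properties (i), (ii), (iii) at once. Property (v) follows because $\pi^{-1}(\Gamma_n)$ is a closed annulus of positive width $\ell_n/(1+S)$ in $\T^2$ and hence has non-empty interior.

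The map $f$ is forced by the semi-conjugacy $\pi \circ f = R \circ \pi$: off $\pi^{-1}(\Xi)$, the composition $\pi^{-1} \circ R \circ \pi$ is well-defined since $R(\Xi) = \Xi$ and $\pi$ is locally invertible there; on each blow-up interval $I_n(\theta) := \pi_\theta^{-1}(\gamma_n(\theta))$, we let $f_\theta$ be the orientation-preserving affine bijection onto $I_{n+1}(\theta+\omega)$ (piecewise-affine with the natural splitting of masses in case of coincidences). At the endpoints of blow-up intervals the two definitions agree by construction. Once continuity of $f$ is established, it is a fiber-preserving continuous bijection of the compact space $\T^2$, hence a homeomorphism; property (iv) is then immediate.

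The main technical step, and the main obstacle, is the continuity of $\theta \mapsto \pi_\theta$ in sup norm, from which continuity of $\pi$ and of $f$ both follow. Given $\varepsilon > 0$, choose $N$ with $\sum_{|n|>N}\ell_n < \varepsilon$; the truncated measure involving only $|n|\le N$ varies continuously with $\theta$ because finitely many $\gamma_n$ are jointly uniformly continuous, and this forces uniform convergence of the associated inverse distribution functions. This is precisely where the flat-intersection hypothesis becomes essential: it guarantees that whenever two atoms $\delta_{\gamma_n(\theta)}$ and $\delta_{\gamma_m(\theta)}$ merge, they do so over a whole interval in $\theta$ rather than at a single transverse crossing, so that the blow-up intervals $I_n(\theta)$ and $I_m(\theta)$ glue into a single interval continuously and without an order-swap. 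Verifying that the piecewise-affine fibre maps $f_\theta$ vary continuously across the boundaries between coincident and disjoint regimes of the family $(\Gamma_n)$ is the delicate point; once this is done, the remainder of the proposition is bookkeeping.
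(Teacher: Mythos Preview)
Your outline has the right architecture --- blow up the atoms of a fibered measure $\mu_\theta = \frac{1}{1+S}(\Leb + \sum_n \ell_n \delta_{\gamma_n(\theta)})$ and let $f$ be forced by the semi-conjugacy --- and this is exactly what the paper does. But two of the steps you flag as routine are in fact the substance of the proof, and your sketch does not get past them.

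\medskip

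\textbf{Normalisation of $\pi_\theta$ and continuity of $\pi$.} Your ``generalised inverse on the universal cover'' determines $\pi_\theta$ only up to a rotation: you never say where $\pi_\theta$ sends a base point. Any fixed normalisation such as $\tilde\pi_\theta(0)=0$ fails, because whenever some $\gamma_n(\theta)$ crosses $0$ the lifted atom jumps from one end of the fundamental domain to the other and $\pi_\theta$ moves by $\ell_n/(1+S)$. Your truncation argument controls the tail but does nothing about this. The paper's fix is to conjugate so that one curve $\Gamma_{n_0}$ becomes the zero section, and then to produce a \emph{continuous lift} $\hat\mu_\theta$ of $\mu_\theta$ to the strip $\T^1\times[0,1]$; the flat-intersection hypothesis enters precisely here, to interpolate an atom between the endpoints $0$ and $1$ while $\Gamma_n$ coincides with $\Gamma_{n_0}$. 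So flat intersections are not used to make arbitrary merging of atoms harmless (weak continuity of $\mu_\theta$ holds regardless), but specifically to make the lift to the strip continuous.

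\medskip

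\textbf{The fibre maps $f_\theta$.} Your piecewise-affine recipe does not yield a continuous $f$. Suppose $\gamma_n$ and $\gamma_m$ coincide on $[\theta_1,\theta_2]$. Just outside this interval $f_\theta$ is affine on $I_n(\theta)$ and on $I_m(\theta)$ with slopes $\ell_{n+1}/\ell_n$ and $\ell_{m+1}/\ell_m$; on the coincidence interval the merged preimage has length $(\ell_n+\ell_m)/(1+S)$ and the merged image $(\ell_{n+1}+\ell_{m+1})/(1+S)$. No ``natural splitting of masses'' matches these at $\theta_1$ unless $\ell_{n+1}/\ell_n = \ell_{m+1}/\ell_m$, which fails for your weights $\ell_n=2^{-|n|-2}$. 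Worse, $\gamma_n$ and $\gamma_m$ may separate in opposite vertical orders at the two ends of $[\theta_1,\theta_2]$, so there is no globally consistent way to label the sub-intervals. The paper circumvents this entirely: it builds, by an inductive collapsing procedure, pairwise disjoint open sets $U_n\subset\mathrm{int}(\pi^{-1}(\Gamma_n))$ with $\Leb(U_{n,\theta})=a_n$, uses them to define a measure $\nu$ with $\pi_*\nu=R_*\mu$ and continuous fibre measures, and lets $f$ be the unique fibered map with $f_*\Leb=\nu$. Continuity of $f$ then follows from continuity of $\nu$, with no splitting to track.

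\medskip

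A minor point: $\pi^{-1}(\Gamma_n)$ is not a closed annulus --- its fibre-length jumps at the boundary of every coincidence interval --- so your justification of (v) is wrong, though the conclusion is salvageable once $\pi$ is known to be continuous (the fibre-intervals have length bounded below and semicontinuous endpoints, hence there is interior at any common continuity point).
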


\begin{adde} \label{add:transitivity}
  Suppose $R$ is transitive, $\Gamma$ has the additional property provided by
  addendum \ref{addendum} and in addition the set $\Xi$ is dense in
  \ntorus. Then $f$ is topologically transitive. 
  \end{adde}

Note that if $R$ is minimal, then the fact that $\Xi$ is dense is obvious. If
  $R$ is only transitive, one may construct $\Gamma$ with this property, see
  remark~\ref{r.proof-adde}~. We remark that the proof of the addendum is
  short and does not depend on the proof of the proposition (see
  section~\ref{ss.proof-adde}).
\smallskip

Theorem~\ref{t.mainthm} now follows immediately from propositions
\ref{p.blow-up-curve} and \ref{prop.construction-curve} and their
addenda. Note that the non-minimality of $f$ follows from property (v) in the
proposition, since this implies that the closure of ${\pi^{-1}(\Xi)^c}$ is a compact
invariant strict subset of \ntorus. 

\paragraph{Idea of the proof of proposition~\ref{p.blow-up-curve}}
Remember that we see proposition~\ref{p.blow-up-curve} as a generalisation of
the classical Denjoy example on the circle. Here is one way to construct the
Denjoy example.  First choose an orbit $O$ for an irrational rotation $R$ on
$\TT^1$, and let $\mu$ be a probability measure which has an atom at each
point of $O$ and no other atom. There is an (essentially unique) increasing
map $\pi : \TT^1 \ra \TT^1$ which sends the Lebesgue measure onto the measure
$\mu$. Then one looks for a circle homeomorphism $f$ such that $\pi \circ f =
R \circ \pi$: this equality determines $f$ outside $\pi^{-1}(O)$; then one
completes the construction by choosing one way to extend $f$ from the interval
$\pi^{-1}(x)$ to $\pi^{-1}(R(x))$ for all $x \in O$.

We will adapt this construction to our setting, with the following
modifications.  The role of the orbit $O$ is now played by the orbit of the
curve $\Gamma$.  The new measure $\mu$ is essentially a sum of one-dimensional
measures along the iterates of $\Gamma$ ($\mu_\theta$ has an atom at $x$ if
and only if $\displaystyle (\theta,x)$ belongs to some iterate of $\Gamma$).
As a consequence of the flat intersection hypothesis, there exists as before a
map $\pi$ sending the Lebesgue measure of $\TT^2$ to $\mu$.
Then we construct a ``nice'' measure $\nu$ on $\TT^2$ which satisfies
$\pi_*\nu=R_*\mu$. This is the difficult part of the proof, the difficulty
being linked to the fact that $\nu$ is not uniquely determined by this
equality.  Then $f$ is defined as the (essentially unique) map sending the
Lebesgue measure onto $\nu$. The equality $\pi\circ f=R\circ \pi$ will follow
automatically. The construction is summed up by the following commutative
diagram.
$$
\xymatrix{ 
\TT^2, \mathrm{Leb}  \ar@{>}[d] ^{\pi}  \ar@{>}[rr] ^{f}  
&& \TT^2 , \nu  \ar@{>}[d] ^{\pi} 
\\
\TT^2, \mu   \ar@{>}[rr] ^{R}  
&& \TT^2 , R_* \mu 
}
$$



\subsection{The semi-conjugacy~$\pi$}
\label{Pi}

In this section, we consider any sequence $(\Gamma_n)_{n\in\ZZ}$ of curves in
$\TT^2$ such that, for every $i,j\in\ZZ$, the curves $\Gamma_i$ and $\Gamma_j$
have flat intersections. We denote $\Xi := \bigcup_{n\in\Z} \Gamma_n$.

\subsubsection{Construction of the measure $\mu$}
For any graph $\Gamma$, we denote by $\delta_{\Gamma}$ the probability measure
on $\TT^2$ whose conditional $\delta_{\Gamma,\theta}$ is the Dirac mass at the
point $x$ such that $(\theta,x) \in \Gamma$.  We choose a sequence of
non-negative real numbers $(a_n)_{n\in\ZZ}$ such that $\beta:=1-\sum_{n\in\ZZ}
a_n$ is positive. Then we define a measure $\mu$ as follows:
\begin{equation} \label{eq:mudef}
  \mu\ := \ \beta\Leb + \sum_{n\in\ZZ}a_n \delta_{\Gamma_{n}}.
\end{equation}

\subsubsection{Definition of $\pi$}
The flat intersections hypothesis plays a crucial role in the addendum of the
following proposition.
\begin{prop}
\label{p.pi}
For any probability measure $\mu$ as defined by~\eqref{eq:mudef},
there exists a continuous onto map $\pi\colon\TT^2\to \TT^2$ 
of the form $(\theta,x)\mapsto (\theta,\pi_\theta(x))$, such that, for
every $\theta$, the map $\pi_\theta:\TT^1\to\TT^1$ is increasing  and
maps the Lebesgue measure onto $\mu_\theta$. 
\end{prop}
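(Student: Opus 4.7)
The plan is to construct $\pi_\theta$ fibrewise as the continuous pseudo-inverse of a cumulative distribution function of $\mu_\theta$, and then verify joint continuity in $(\theta,x)$ using the positivity of $\beta$ and the continuity of the atom positions $\theta\mapsto\gamma_n(\theta)$. First I would lift everything to $\RR$: pick continuous lifts $\tilde\gamma_n:\RR\to\RR$ of the graph maps of $\Gamma_n$, and lift $\mu_\theta$ to the $1$-periodic measure $\tilde\mu_\theta = \beta\cdot\Leb + \sum_n a_n \sum_{k\in\ZZ}\delta_{\tilde\gamma_n(\theta)+k}$ on $\RR$. Define $F_\theta:\RR\to\RR$ by $F_\theta(0)=0$ and $F_\theta(y)-F_\theta(y') := \tilde\mu_\theta((y',y])$ for $y>y'$. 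Since $\beta>0$, $F_\theta$ is right-continuous, strictly increasing with slope at least $\beta$ between atoms, and satisfies $F_\theta(y+1)=F_\theta(y)+1$. Setting $\tilde\pi_\theta(x):=\sup\{y:F_\theta(y)\leq x\}$ yields a map that is continuous (since $F_\theta$ has no plateau), non-decreasing, $(1/\beta)$-Lipschitz, satisfies $\tilde\pi_\theta(x+1)=\tilde\pi_\theta(x)+1$, and pushes Lebesgue onto $\tilde\mu_\theta$. It descends to the required $\pi_\theta:\TT^1\to\TT^1$ sending $\Leb$ to $\mu_\theta$, and each $\pi_\theta$ is surjective of degree one, so $\pi$ is onto.

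For joint continuity of $(\theta,x)\mapsto\tilde\pi_\theta(x)$ I would use a sandwich argument. Given $(\theta_k,x_k)\to(\theta_0,x_0)$, set $y_k:=\tilde\pi_{\theta_k}(x_k)$; by the uniform Lipschitz bound the sequence $(y_k)$ is bounded, so after passing to a subsequence assume $y_k\to y^*$. For any $\varepsilon>0$ and all large $k$ one has $F_{\theta_k}(y_k-\varepsilon)\leq x_k\leq F_{\theta_k}(y_k)$. The linear part $\beta y$ of $F_\theta$ is jointly continuous, and the atomic part $\sum_n a_n\#\{k\in\ZZ:\tilde\gamma_n(\theta)+k\in(0,y]\}$ depends on $\theta$ in a suitably semi-continuous way (each summand is semi-continuous in $(\theta,y)$, and uniform convergence of $\sum a_n$ propagates this to the sum). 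Passing to the limit gives $F_{\theta_0}((y^*-\varepsilon)^-)\leq x_0\leq F_{\theta_0}(y^*)$, and letting $\varepsilon\to 0$ forces $y^*=\tilde\pi_{\theta_0}(x_0)$ by the strict monotonicity of $F_{\theta_0}$.

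The main obstacle is precisely this continuity step, because $F_\theta(y)$ is genuinely \emph{not} jointly continuous in $(\theta,y)$: the atoms $\tilde\gamma_n(\theta)+k$ sweep across fixed values of $y$ as $\theta$ varies, producing jumps. The whole point is that the pseudo-inverse $\tilde\pi_\theta$ remains continuous even where $F_\theta$ is not, and this is exactly where positivity of $\beta$ (giving the uniform Lipschitz control on $\tilde\pi_\theta$) and summability of $(a_n)$ (giving uniform control on the tail of the atomic part, hence allowing the one-sided semi-continuities of the summands to combine) play essential and complementary roles.
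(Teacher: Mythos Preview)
Your construction has a genuine gap: the rigid normalisation $F_\theta(0)=0$ does not produce a continuous $\pi$, even under the flat intersections hypothesis (which is part of the standing setup but which your argument never invokes). Here is a concrete counterexample. Take $\gamma_0\equiv 0$ and, locally near $\theta=0$, $\gamma_1(\theta)=\max(0,\theta)$; these have a flat intersection on $\{\theta\leq 0\}$. With $\beta=\tfrac12$, $a_0=a_1=\tfrac14$, your lifts give, for small $\theta>0$ and $y\in(0,1)$,
\[
F_\theta(y)=\begin{cases}\tfrac12 y & (y<\theta)\\ \tfrac12 y+\tfrac14 & (y\geq\theta)\end{cases}
\qquad\text{while}\qquad F_0(y)=\tfrac12 y.
\]
Hence for any fixed $x\in(0,\tfrac14)$ one computes $\tilde\pi_\theta(x)=\theta\to 0$ as $\theta\to 0^+$, but $\tilde\pi_0(x)=2x>0$. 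Your sandwich argument breaks at exactly this point: with $\theta_k\to 0^+$ and $x_k\equiv x$ one gets $y_k=\theta_k\to y^*=0$ and $F_{\theta_k}(y_k)=\tfrac12\theta_k+\tfrac14\to\tfrac14$, whereas $F_{\theta_0}(y^*)=F_0(0)=0$; so $\liminf F_{\theta_k}(y_k)>F_{\theta_0}(y^*)$ and the inequality $x_0\leq F_{\theta_0}(y^*)$ you need simply fails. The claimed ``semi-continuities of the summands'' do not hold in any usable direction: the half-open interval $(0,y]$ makes each count $\#\{k:\tilde\gamma_n(\theta)+k\in(0,y]\}$ jump up when an atom enters through $0$ and jump down when it exits through $y$.

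The underlying reason is that a map $\pi_\theta$ sending $\Leb$ to $\mu_\theta$ is determined only up to a rotation, and your anchoring at the fixed point $0$ selects the wrong rotation when atoms sit at $0$. The paper's proof uses a different anchoring: it first conjugates so that one curve $\Gamma_{n_0}$ becomes the zero section, and then lifts $\mu$ to $\TT^1\times[0,1]$ in a way that is \emph{weakly continuous} in $\theta$ (Lemma~\ref{l.lift-measure}). The flat intersection hypothesis enters precisely here: whenever another $\Gamma_n$ meets $\Gamma_{n_0}$, it does so on a nontrivial interval, giving room to continuously interpolate the lifted Dirac mass between $\delta_0$ and $\delta_1$ across that interval. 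Once $\widehat\mu_\theta$ is weakly continuous and has full support (thanks to $\beta>0$), the pseudo-inverse on $[0,1]$ is automatically jointly continuous, with no semi-continuity bookkeeping. In short: the CDF pseudo-inverse idea is right, but the anchoring must be done along one of the curves and must exploit flat intersections; anchoring at a fixed fibre point cannot work in general.
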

In particular, $\pi$ satisfies the properties (i), (ii) and
(iii) stated in proposition~\ref{p.blow-up-curve}.

\begin{adde} \label{add.pi}
For any given $n_0\in\Z$, it is possible to choose $\pi$ in
proposition~\ref{p.pi} such that the set $B_{n_0}=\pi^{-1}(\Gamma_{n_0})$
contains the annulus $S^1\times [0,a_{n_0}]$.
\end{adde}

\begin{proof}[Proof of proposition~\ref{p.pi} and addendum~\ref{add.pi}]
Fix an integer $n_0\in\ZZ$. Let $\Phi :
(\theta,x)\mapsto(\theta,x+\alpha(\theta))$ be the skew rotation sending the
graph $\Gamma_{n_0}$ to the zero section $\TT^1\times\{0\}$.  If we construct
a map $\pi$ satisfying the statement of the proposition and its addendum with
the graphs $\Gamma_{n}$ replaced by $\Phi(\Gamma_{n})$ and the measure $\mu$
replaced by $\Phi_* \mu$, then the map $\pi' = \Phi^{-1} \circ \pi$ will
satisfy the statements for the original objects. Thus we may change
coordinates under $\Phi$ and assume that $\Gamma_{n_0}$ is the zero section
$\TT^1\times\{0\}$.  In order to define the map $\pi:\TT^2\to\TT^2$, we will
first construct a map $\widehat\pi:\TT^1\times [0,1]\to\TT^1\times [0,1]$.
The following lemma is easy but crucial.
\begin{lemm}
\label{l.lift-measure}
Denote by $P : \TT^1\times [0,1] \to \ntorus$ the natural projection. Then there
exists a \emph{continuous lift} $\widehat\mu$ of $\mu$, that is,  
$\widehat\mu$ is a probability measure  on $\TT^1\times [0,1]$ such that
$P_*\widehat\mu=\mu$, and such that $\widehat\mu_\theta$ depends
continuously on $\theta$ (with respect to the weak topology on the space of probability measures).
\end{lemm}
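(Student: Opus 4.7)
The plan is to construct $\widehat\mu$ by lifting each summand of
\[
\mu \ = \ \beta\,\Leb_{\TT^2}\ +\ \sum_{n\in\ZZ} a_n\,\delta_{\Gamma_n}
\]
separately into a weakly-continuous family of probability measures on $[0,1]$, then superposing linearly. Since the statement of the lemma is invariant under a fibre-wise skew rotation $\Phi:(\theta,x)\mapsto(\theta,x+\alpha(\theta))$, and such a rotation preserves flat intersections, I may assume that $\gamma_{n_0}\equiv 0$, i.e.\ $\Gamma_{n_0}$ is the zero section (as is in fact done at the start of the proof of proposition~\ref{p.pi}). The Lebesgue summand is trivial: take $\Leb_{\TT^1\times[0,1]}$, whose $\theta$-disintegration is the constant $\Leb_{[0,1]}$ and which projects to $\Leb_{\TT^2}$ under $P$. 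The real task is, for each $n$, to find a weakly continuous map $\theta\mapsto\widehat{\delta_n}_\theta$ into probability measures on $[0,1]$ with $(P_\theta)_*\widehat{\delta_n}_\theta=\delta_{\gamma_n(\theta)}$.

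The fibres of $P_\theta$ impose rigidity. Since $P_\theta$ restricts to a homeomorphism from $(0,1)$ onto $\TT^1\setminus\{0\}$, whenever $\gamma_n(\theta)\ne 0$ the measure $\widehat{\delta_n}_\theta$ must be the Dirac mass at the unique preimage of $\gamma_n(\theta)$ in $(0,1)$. Over the closed set $C_n:=\gamma_n^{-1}(0)\subset\TT^1$, the fibre $P_\theta^{-1}(\{0\})$ is the two-point set $\{0,1\}$ and I have the one-parameter freedom
\[
\widehat{\delta_n}_\theta \ =\ (1-t_n(\theta))\,\delta_0\ +\ t_n(\theta)\,\delta_1, \qquad t_n(\theta)\in[0,1].
\]
I must choose $t_n$ so that the combined family is weakly continuous on all of $\TT^1$, i.e.\ so that $t_n$ matches the boundary values that the atomic behaviour on $\TT^1\setminus\{0\}$ forces on $\partial C_n$.

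This is where the flat-intersection hypothesis enters decisively: since $\gamma_{n_0}\equiv 0$, one has $C_n = p_1(\Gamma_n\cap\Gamma_{n_0})$, which by hypothesis is a finite disjoint union of \emph{non-trivial} closed intervals of $\TT^1$. At each endpoint $\theta_*$ of such a component, the weak-$*$ limit of $\widehat{\delta_n}_\theta$ from outside $C_n$ forces $t_n(\theta_*)\in\{0,1\}$, determined by whether $\gamma_n$ approaches $0\in\TT^1$ from the $0^+$ or the $1^-$ side of $[0,1]$. Because each component is a non-degenerate interval, I have room to interpolate $t_n$ continuously between the two prescribed endpoint values, whatever they are. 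For $n=n_0$, where $C_{n_0}=\TT^1$ and there is no boundary, I simply take $t_{n_0}\equiv 0$, giving $\widehat{\delta_{n_0}}_\theta=\delta_0$ for all $\theta$.

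Setting $\widehat\mu_\theta := \beta\,\Leb_{[0,1]} + \sum_n a_n\,\widehat{\delta_n}_\theta$ and $\widehat\mu := \int\widehat\mu_\theta\,d\theta$, the identity $P_*\widehat\mu=\mu$ is immediate by linearity, and weak continuity of $\theta\mapsto\widehat\mu_\theta$ follows from continuity of each summand together with the Weierstrass M-test (for a continuous $f:[0,1]\to\RR$, the series $\sum_n a_n\int f\,d\widehat{\delta_n}_\theta$ is dominated by $\|f\|_\infty\sum_n a_n<\infty$). The main obstacle I anticipate is the interpolation of $t_n$: without flat intersections, some $\gamma_n$ could touch or cross the zero section at an isolated point where no continuous value of $t_n$ matches the two-sided approach, making the continuous lift impossible. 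The flatness property established in section~\ref{section.curve} is precisely what removes this obstruction.
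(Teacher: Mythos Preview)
Your proof is correct and follows essentially the same approach as the paper: lift the Lebesgue part trivially, lift each $\delta_{\Gamma_n}$ by taking the unique Dirac preimage off $C_n=\gamma_n^{-1}(0)$ and interpolating $(1-t)\delta_0+t\delta_1$ across each component of $C_n$, using the flat-intersection hypothesis to guarantee these components are non-degenerate intervals. Your explicit M-test argument for the continuity of the full sum is a nice addition that the paper leaves implicit.
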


\begin{proof}[Proof of lemma~\ref{l.lift-measure}]
Clearly, the Lebesgue measure on $\TT^1\times [0,1]$ is a continuous lift of
the Lebesgue measure on $\TT^2$. Seemingly, since $\Gamma_{n_{0}}$ is the null
section, the measure $\delta_{\TT^1\times\{0\}}$ in $\TT^1\times [0,1]$ is a
lift of $\delta_{\Gamma_{n_{0}}}$ in $\TT^2$.  It remains to prove that any
measure $m=\delta_{\Gamma}$ on a graph $\Gamma$ having a flat intersection
with the null section $\Gamma_{n_{0}}$ has a continuous lift $\widehat m$.
Now let $\theta$ be such that the points of $\Gamma$ and $\Gamma_{n_{0}}$ on
$\TT^1_{\theta}$ are distinct. Then we choose $\widehat m_{\theta}$ to be the
only measure that projects down to $m_{\theta}$, that is, the Dirac mass on
$P^{-1}(\Gamma_{\theta})$.  By continuity this determines the value of
$\widehat m_{\theta} \in \{\delta_{0},\delta_{1}\}$ when $\theta$ is an
endpoint of a (non trivial) interval $I$ where $\Gamma$ coincides with the
null section. Then we extend the construction on such an interval $I$ by
continuously (e. g. linearly) interpolating the Dirac masses $\delta_{0}$ and
$\delta _{1}$.
\end{proof}

Next, we define the map $\widehat\pi:\TT^1\times [0,1]\to\TT^1\times [0,1]$ by
$$
\widehat\pi(\theta,x):=(\theta,\widehat\pi_\theta(x))
$$
and
\begin{equation}
\widehat\pi_\theta(x)=\min\{y\in [0,1] \mid \widehat\mu_\theta([0,y])\geq x\}.   
\end{equation}
Observe that the map $\widehat\pi$ is continuous; indeed:
\begin{itemize}
\item the map $\widehat\pi_\theta:[0,1]\to [0,1]$ is continuous for every 
  $\theta\in\TT^1$ since the measure $\widehat\mu_\theta$ gives a
  positive mass to every open set in $\{\theta\}\times [0,1]$ (recall that
  $\beta > 0$);
\item the map $\widehat\pi_\theta$ depends continuously on $\theta$
  since the measure $\widehat\mu_\theta$ depends continuously on
  $\theta$.
\end{itemize}
Moreover, by construction, for every $\theta$ the map
$\widehat\pi_\theta$ is increasing and maps the Lebesgue measure of
$[0,1]$ onto $\widehat\mu_\theta$. 

Clearly, the map $\widehat\pi:\TT^1\times [0,1]\to\TT^1\times [0,1]$ induces a
continuous map $\pi:\TT^2\to\TT^2$ having the wanted properties.
Note that this map also satisfies the property required by addendum~\ref{add.pi}.
\end{proof}


\subsubsection{Uniqueness of $\pi$}
\begin{prop}\label{p.uniqueness-pi}
Let us consider a measure $\mu$ as defined in~\eqref{eq:mudef} on $\TT^2$
and two maps $\pi,\pi'$ which both satisfy the assertions of proposition~\ref{p.pi}.
Then, there exists a continuous skew rotation
$A:(\theta,x)\mapsto(\theta,x+\alpha(\theta))$ of $\TT^2$ such
that $\pi=\pi'\circ A$. 
\end{prop}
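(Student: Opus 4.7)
The plan is to establish the result fibrewise: show that for each $\theta$ the maps $\pi_\theta$ and $\pi'_\theta$ differ by pre-composition with a uniquely determined rotation of $\TT^1$, and then deduce that the rotation angle depends continuously on~$\theta$.

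For the fibrewise uniqueness I would fix $\theta$ and lift everything to $\RR$. The hypothesis $\beta>0$ in~\eqref{eq:mudef} ensures that $\mu_\theta$ has full topological support on $\TT^1$. I would introduce the cumulative distribution function $F_\theta$ of a lift $\tilde\mu_\theta$ of $\mu_\theta$, normalised so that $F_\theta(y+1)=F_\theta(y)+1$. It is non-decreasing, strictly increasing because of full support, and may have jumps only at the atoms of $\mu_\theta$. Its pseudo-inverse $G_\theta(t):=\inf\{y\in\RR: F_\theta(y)\geq t\}$ is then continuous, non-decreasing and $\ZZ$-equivariant, and pushes the Lebesgue measure of $\RR$ onto $\tilde\mu_\theta$. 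A classical argument (computing the Lebesgue measure of $\tilde\pi_\theta^{-1}((a,b])$ and comparing with $\tilde\mu_\theta((a,b])$) shows that any continuous, non-decreasing, $\ZZ$-equivariant map $\RR\to\RR$ pushing Lebesgue to $\tilde\mu_\theta$ is of the form $G_\theta\circ T_c$, where $T_c(x)=x+c$, for a unique $c\in\RR/\ZZ$. Applied to lifts of $\pi_\theta$ and $\pi'_\theta$, this yields real numbers $c_\theta, c'_\theta$ with $\tilde\pi_\theta=G_\theta\circ T_{c_\theta}$ and $\tilde\pi'_\theta=G_\theta\circ T_{c'_\theta}$, so that setting $\alpha(\theta):=c_\theta-c'_\theta\bmod 1$ gives the required relation $\pi_\theta=\pi'_\theta\circ R_{\alpha(\theta)}$. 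The uniqueness of $\alpha(\theta)$ follows from the fact that a degree one circle map admits no non-trivial period (if $\tilde\pi'_\theta(x+\beta)-\tilde\pi'_\theta(x)\in\ZZ$ were constantly some $k$ with $\beta\notin\ZZ$, dividing by $n$ and letting $n\to\infty$ in $\tilde\pi'_\theta(n\beta)=\tilde\pi'_\theta(0)+nk$ would force $k=\beta$, contradicting $k\in\ZZ$).

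For the continuity of $\alpha$ I would use a standard accumulation argument. Given $\theta_n\to\theta$, compactness of $\TT^1$ lets me extract a subsequence along which $\alpha(\theta_{n_k})\to\alpha^*$. Passing to the limit in the identity
\begin{equation*}
\pi_{\theta_{n_k}}(x)\ =\ \pi'_{\theta_{n_k}}\bigl(x+\alpha(\theta_{n_k})\bigr),
\end{equation*}
using the joint continuity of $\pi$ and $\pi'$ on $\TT^2$, yields $\pi_\theta(x)=\pi'_\theta(x+\alpha^*)$ for every $x\in\TT^1$. The fibrewise uniqueness then forces $\alpha^*=\alpha(\theta)$, so every accumulation point of $\alpha(\theta_n)$ equals $\alpha(\theta)$ and the map $\alpha:\TT^1\to\TT^1$ is continuous. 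Defining $A(\theta,x):=(\theta,x+\alpha(\theta))$ concludes the proof.

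I expect the main obstacle to be the fibrewise uniqueness, precisely because the maps $\pi_\theta$ and $\pi'_\theta$ are not in general homeomorphisms: they collapse entire intervals onto the atoms of $\mu_\theta$ (which is exactly how they blow up the iterates of $\Gamma$). The pseudo-inverse approach handles this unambiguously only because the strict monotonicity of $F_\theta$, guaranteed by $\beta>0$, makes $G_\theta$ continuous; the atomic and non-atomic parts of $\mu_\theta$ are then matched up automatically by the single translation parameter~$c$.
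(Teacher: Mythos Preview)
Your proof is correct and follows the same two-step outline as the paper: first establish the fibrewise relation $\pi_\theta=\pi'_\theta\circ R_{\alpha(\theta)}$, then prove that $\alpha$ is continuous. The paper is terser on the first step---it simply asserts that two increasing circle maps pushing Lebesgue onto the same measure differ by a rotation---whereas you spell this out via the distribution function and its pseudo-inverse, and add an explicit uniqueness argument for $\alpha(\theta)$. For continuity the paper takes a more direct $\varepsilon$--$\delta$ route: it fixes $\theta_0$, picks a non-atom $x_0$ of $\mu_{\theta_0}$ so that $\pi_{\theta_0}^{-1}(\{x_0\})$ and $(\pi'_{\theta_0})^{-1}(\{x_0\})$ are singletons, and uses the continuity of $\pi$ and $\pi'$ to show these preimages stay in small neighbourhoods for nearby $\theta$; since the rotation by $\alpha(\theta)$ carries one preimage to the other, continuity of $\alpha$ at $\theta_0$ follows. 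Your compactness-plus-uniqueness argument is an equally valid alternative, and arguably cleaner once the fibrewise uniqueness of $\alpha(\theta)$ is in hand; the paper's argument, on the other hand, avoids having to prove that uniqueness explicitly.
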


\begin{proof}
For every $\theta\in\TT^1$, both $\pi_\theta$ and $\pi_\theta'$ are
circle maps which map the Lebesgue measure of $\TT^1_\theta$ onto
$\mu_\theta$. It follows that, for every $\theta\in\TT^1$, the circle
maps $\pi_\theta$ and $\pi_\theta'$ coincide modulo a rotation: for
every $\theta\in\TT^1$, there exists $\alpha(\theta)\in\TT^1$ such that
$\pi_\theta(y)=\pi_\theta'(y+\alpha(\theta))$. 
This provides a skew rotation 
$A:(\theta,x)\mapsto(\theta,x+\alpha(\theta))$ of $\TT^2$ such that 
$\pi=\pi'\circ A$. 
We are left to show that $A$ is continuous, i.e. that $\alpha(\theta)$
depends continuously on $\theta$. 

Fix $\theta_0\in\TT^1$, and $\epsilon>0$. Then choose $x_0\in\TT^1_\theta$
such that $\mu_{\theta_0}(\{x_0\})=0$.  For every $\theta$, we consider the
intervals $I_\theta:=\pi_{\theta}^{-1}(\{x_0\})$ and
$I_\theta':=(\pi_{\theta}')^{-1}(\{x_0\})$.  Since
$\mu_{\theta_0}(\{x_0\})=0$, both intervals $I_{\theta_0}$ and
$I_{\theta_0}'$ consist of a single point. Moreover, the continuity of $\pi$
implies that there exists $\delta>0$ such that, for $\theta\in
[\theta_0-\delta,\theta_0+\delta]$, the interval $I_{\theta}$
(resp. $I_\theta'$) is included in an $\epsilon$-neighbourhood of the interval
$I_{\theta_0}$ (resp. $I_{\theta_0}'$). Now, for every $\theta$, the rotation
$y\mapsto y+\alpha(\theta)$ maps the interval $I_\theta$ to the interval
$I_\theta'$. Consequently $\alpha$ is continuous at $\theta_0$, and as
$\theta_0$ was arbitrary it follows that $\alpha$ is continuous on the whole
of $\kreis$. 
\end{proof}



\subsubsection{Description of the sets $\pi^{-1}(\Gamma_n)$}
In order to construct the measure $\nu$ in the next section, we will need some
more details about the geometry of the sets $B_n:=\pi^{-1}(\Gamma_n)$.  Note
that the following proposition will only be used in the construction of $\nu$,
and it may be a good idea to skip the proof for a first reading.
\begin{prop} 
\label{lem:partition}
There exists a sequence of open sets $(U_n)$, such that for all $n\in\Z$ there holds: 
\romanlist
\item $U_n \ssq \mathrm{int}(B_n)$;
\item $U_n \cap U_m = \emptyset \ \forall m \neq n$;
\item $\Leb(U_{n,\theta}) = a_n$;
\item for every $\theta$ and every $n$, $U_{n,\theta}$ is the union of at most
$2|n|+1$ open intervals.  \listend
\end{prop}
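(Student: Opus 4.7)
The plan exploits the flexibility in the construction of $\pi$ provided by addendum~\ref{add.pi} and the uniqueness statement (proposition~\ref{p.uniqueness-pi}). For each $n\in\Z$, applying addendum~\ref{add.pi} with $n_0=n$ yields a semi-conjugacy $\pi^{(n)}$ such that $(\pi^{(n)})^{-1}(\Gamma_n)\supset\TT^1\times[0,a_n]$, and by proposition~\ref{p.uniqueness-pi} there is a continuous skew rotation $A_n\colon(\theta,x)\mapsto(\theta,x+\alpha_n(\theta))$ with $\pi=\pi^{(n)}\circ A_n$. The set $V_n:=A_n^{-1}(\TT^1\times(0,a_n))$ is then an open subset of $\mathrm{int}(B_n)$ whose slice at each $\theta$ is a single open interval of length $a_n$. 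Each $V_n$ individually satisfies (i), (iii) and (iv), but different $V_n$'s may overlap.

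The $U_n$'s are then built by trimming the $V_n$'s inductively. Enumerate $\Z$ in non-decreasing order of $|n|$, say $(n_k)_{k\geq 0}=(0,1,-1,2,-2,\ldots)$, and assume $U_{n_0},\ldots,U_{n_{k-1}}$ have been constructed, pairwise disjoint and satisfying (i)--(iv). Write $E_{n,m}:=\{\theta\in\TT^1\colon\Gamma_n\cap(\{\theta\}\times\TT^1)=\Gamma_m\cap(\{\theta\}\times\TT^1)\}$ for the flat-intersection locus, which by hypothesis is a finite union of closed intervals. Set
$$B'_k:=\mathrm{int}(B_{n_k})\setminus\bigcup_{j<k}\overline{U_{n_j}}.$$
A fiber-wise analysis shows that $\mathrm{int}(B_{n_k})\cap(\{\theta\}\times\TT^1)$ is a single open interval $I_\theta$ of measure at least $a_{n_k}$ (it contains $V_{n_k,\theta}$), and that $B'_k\cap(\{\theta\}\times\TT^1)$ still has measure at least $a_{n_k}$: at fibers $\theta$ in the interior of the coincidence locus, $|I_\theta|$ equals the length $\sum_{m\colon\theta\in E_{n_k,m}}a_m$ of the fat interval $B_{n_k,\theta}$, while the total contribution of the previous $U_{n_j,\theta}$'s inside $I_\theta$ is bounded by $|I_\theta|-a_{n_k}$; at boundary fibers of some $E_{n_k,m}$, the limits (from the thin side) of the $B_{n_j,\theta}$'s for $n_j$ in the coincidence class of $n_k$ are sub-intervals which partition the fat interval $B_{n_k,\theta}$, so the corresponding $U_{n_j,\theta}$ and $I_\theta$ sit in disjoint pieces of that fat interval and nothing is removed. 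I then take $U_{n_k}$ to be an open subset of $B'_k$ with fiber measure exactly $a_{n_k}$, starting from $V_{n_k}\cap B'_k$ and enlarging it by open pieces of $B'_k\setminus V_{n_k}$ to make up any deficit.

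The component bound (iv) is then immediate: in each fiber, $U_{n_k,\theta}$ sits in the single open interval $I_\theta$ minus at most $k\leq 2|n_k|$ closed sub-intervals (one per previous $U_{n_j,\theta}$ that meets $B_{n_k,\theta}$), hence it has at most $2|n_k|+1$ open components. The main obstacle is carrying out the enlargement step in a way that keeps $U_{n_k}$ \emph{open}. The deficit $a_{n_k}-|V_{n_k,\theta}\cap B'_{k,\theta}|$ varies with $\theta$, and whenever $\theta$ crosses the boundary of a flat-intersection set $E_{n_k,m}$ the interval $B_{n_k,\theta}$ jumps in size as $m$ enters or leaves the coincidence class of $n_k$. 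The added open pieces must therefore merge smoothly with $V_{n_k}$ through every such transition. This is possible because the flat-intersection hypothesis guarantees that these transitions occur at only finitely many points of $\TT^1$ for each pair $(n_k,m)$, and the continuous lift $\widehat\mu$ from lemma~\ref{l.lift-measure} controls the relative positions of the sub-intervals of $B_{n_k,\theta}$ on both sides of each transition, allowing a continuous interpolation of the added pieces.
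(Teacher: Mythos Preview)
Your plan has the right ingredients (addendum~\ref{add.pi} and proposition~\ref{p.uniqueness-pi}), and the sets $V_n$ you build are indeed open continuous annuli of fibre width $a_n$ inside $\mathrm{int}(B_n)$. But the inductive ``trim and enlarge'' step contains two genuine gaps.

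\textbf{The component bound (iv) does not follow.} You assert that $U_{n_k,\theta}$ lies in the single interval $I_\theta$ minus ``at most $k\leq 2|n_k|$ closed sub-intervals (one per previous $U_{n_j,\theta}$)''. But by your own induction hypothesis, each $U_{n_j,\theta}$ is already a union of up to $2|n_j|+1$ intervals, not one. Removing $\overline{U_{n_0,\theta}},\ldots,\overline{U_{n_{k-1},\theta}}$ from $I_\theta$ can therefore create far more than $k+1$ components, and the bound $2|n_k|+1$ is lost. Nothing in your enlarging step forces $U_{n_j,\theta}\cap I_\theta$ to be connected.

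\textbf{The enlarging step is not carried out.} You correctly identify this as the main obstacle, but the justification you give (finitely many transitions per pair $(n_k,m)$, plus the continuous lift of lemma~\ref{l.lift-measure}) is not a construction. Two points remain unaddressed: first, the measure estimate $|B'_{k,\theta}|\geq a_{n_k}$ needs $|(\overline{U_{n_j}})_\theta|=a_{n_j}$, which is an extra regularity property of $U_{n_j}$ that must be propagated through the induction but is never stated; second, even granting the estimate, producing an \emph{open} subset of $B'_k$ with fibre measure exactly $a_{n_k}$ requires the boundary of $B'_k$ to vary continuously enough in $\theta$, and since $B'_k$ depends on all the previously ``enlarged'' $U_{n_j}$'s, you are assuming the very regularity you have not yet produced.

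The paper's proof sidesteps both problems by carrying an extra structure through the induction: a fibre-respecting monotone map $P_n\colon\TT^2\to\TT^2$ which collapses each of $U_0,\ldots,U_n$ to a single continuous curve $\Delta_j=P_n(U_j)$, together with a factorisation $\pi=\pi_n\circ P_n$ where $\pi_n$ sends Lebesgue to a reduced measure $\mu'(n)$ of the same form as $\mu$. At step $n{+}1$ one applies addendum~\ref{add.pi} to $\mu'(n{+}1)$ (not to $\mu$) to obtain a genuine annulus $A$ of width $a_{n+1}/\alpha_n$ in the collapsed torus, and sets
\[
U_{n+1}\ =\ P_n^{-1}\bigl(\mathrm{int}(A)\setminus(\Delta_0\cup\cdots\cup\Delta_n)\bigr).
\]
Openness is automatic (preimage of an open set), disjointness from the earlier $U_j$ is automatic (the $\Delta_j$ were removed), the fibre measure is read off from the defining property of $P_n$, and the component bound is immediate because one removes at most $n{+}1$ \emph{points} --- not multi-component sets --- from a single interval. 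The collapsing maps are exactly the device that converts your multi-component $U_{n_j}$'s into single points per fibre, which is what makes the induction close.
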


\begin{bem}~ \label{rem:partition}
\alphlist
\item Item (i) of this proposition implies property (v) of
  proposition~\ref{p.blow-up-curve}~.  
\item \label{rem:partition-b}
For every $\theta$, the union over $n$ of the sets $U_{n,\theta}$ is contained
in $(\inte(\pi^{-1}(\Xi)))_\theta$. Property (iii) implies that
this union has full measure in $(\pi^{-1}(\Xi))_\theta$. Therefore, for any
$\thx \in \Xi$, the intersection $(\inte(\pi^{-1}(\Xi)))_\theta \cap
\pi_\theta^{-1}(\{x\})$ has full measure in $\pi_\theta^{-1}(\{x\})$, and
therefore is dense in $\pi_\theta^{-1}(\{x\})$. This observation will be used
in the construction of specific examples in section~\ref{Structure}.
\listend
\end{bem}

\begin{adde}
\label{p.V}
For any $\varepsilon>0$ and any $n\in \ZZ$, there exists a compact
set $V_n\subset U_n$ that satisfies
$\Leb(V_{n,\theta}) \geq (1-\epsilon)a_n  \ \forall \theta \in \kreis$.  
\end{adde}

\begin{proof}[Proof of proposition~\ref{lem:partition}]
Let us first give an idea of the proof, by explaining the construction of
$U_{0}$ and $U_{1}$. According to addendum~\ref{add.pi}, we may assume that
$\pi^{-1}(\Gamma_0)$ contains the annulus $A_0:=\TT^1\times [0,a_0]$. We
define $U_{0}$ to be the interior of this annulus. To construct $U_{1}$, we
first note that the map $\pi$ can be factorized as $\pi = \pi_{0} \circ P_{0}$
where $P_{0}$ consists in collapsing the annulus $U_{0}$. Furthermore, the map
$\pi_{0}$ is very similar to the map $\pi$: the results concerning $\pi$ will
also apply to $\pi_{0}$, and in particular we will find an annulus $A_{1} $
included in $\pi_{0}^{-1}(\Gamma_{1})$ having the desired width. Taking the
complement of the curve $P_{0}(U_{0})$ in the interior of $A_{1}$, and
bringing this open set back under $P_{0}$, will provide the open set $U_{1}$;
note that in each fibre $U_{1}$ consists of at most two intervals, as
required.

By relabelling the sequence $\Gamma_{n}$, we can assume that it is indexed
over $\NN$ instead of $\ZZ$, which is more suitable for a proof by
induction. Formally, every $n$ in the following proof should read $\varphi(n)$
where $\varphi$ is some bijection between $\NN$ and $\ZZ$. Note in particular
that this changes property (iv), that now reads:
\begin{enumerate}
\item[\emph{(iv-bis)}] \emph{for every $\theta$ and every $n$, $U_{n,\theta}$
is the union of at most $n+1$ open intervals}.
\end{enumerate}

The construction will be done by induction on $n\in\NN$ by assuming the
following additional hypotheses.
\begin{itemize}
\item[\textit{(v)}]
\begin{enumerate}
\item \textit{There exists a fibre respecting monotonic map $P_{n}: \TT^2
 \rightarrow \TT^2$ having the following property: the restriction of the
 Lebesgue measure to the complement of $U_{0,\theta} \cup \cdots \cup
 U_{n,\theta}$ is sent to the measure $\alpha_{n}\Leb$ where $\alpha_{n}=
 1-a_{0}- \cdots - a_{n}$. }
\item \textit{The projections of the open sets $U_0,\dots,U_n$ by $P_n$ are
$n+1$ continuous graphs.}
\item \textit{There exists a fibre respecting monotonic map $\pi_n\colon
\TT^2\to \TT^2$, such that for every $\theta$, the map $\pi_{n,\theta} :
\kreis \rightarrow \kreis$ sends the Lebesgue measure on the fibre measure
$\mu'(n)_{\theta}$ of the measure
$$\mu'(n) = \frac{1}{\alpha_{n}} \left(  \beta\Leb + \sum_{i>n}a_i\mu^i  \right).$$}
\item \textit{We have $\pi_n\circ P_n=\pi$.}
\end{enumerate}
\end{itemize}
Note that the crucial hypothesis here is the fact that the simultaneous
collapsing of the sets $U_0,\dots,U_n$ yields $n+1$ continuous graphs
(hypothesis~\textit{(v)}.2 above). Hypothesis~\textit{(v)} is illustrated by
the following commutative diagram, showing the correspondence between the
different measures.
$$
\xymatrix{ 
 \mathrm{Leb} \ar@{>}[dr]   ^{P_{n}}   \ar@{>}[dd]   ^{\pi}
 \\
& \alpha_{n} \mathrm{Leb} +\sum_{i=0}^n a_{i} \delta_{P_{n}(U_{i})}
\ar@{>}[dl]   ^{\pi_{n}}
 \\
 \mu = \alpha_{n}\mu'(n) + \sum_{i=0}^n a_{i} \delta_{\Gamma_{i}}
}
$$

Assume we have constructed the sets $U_{0}, \dots , U_{n}$ and the maps $P_{n}, \pi_{n}$
satisfying the above hypotheses.


Let $q : \TT^2 \mapsto \TT^2$ be the fibre respecting increasing map that
sends the annulus $\bar A:=\TT^1\times [0,\frac{a_{n+1}}{\alpha_n}]$ on the
null section $\kreis \times \{0\}$, and sends the restriction of the Lebesgue
measure to the complement of $[0,\frac{a_{n+1}}{\alpha_n}]$ on the measure
$\left(1-\frac{a_{n+1}}{\alpha_{n}}\right)\Leb$ (see the diagram below).  Let
$\pi_{n+1} : \TT^2 \mapsto \TT^2$ be a fibre respecting increasing map that
sends the null section on the curve $\Gamma_{n+1}$, and sends the Lebesgue
measure in each fibre to the measure $\mu'(n+1)= {\alpha_{n+1}}^{-1} \left(
\beta\Leb + \sum_{i>n+1}a_i\mu^i \right)$, where $\alpha_{n+1}=1-a_{0}- \cdots
- a_{n+1}$.  The existence of such a map $\pi_{n+1}$ is guaranteed by
proposition~\ref{p.pi} and its addendum, applied with $\mu$ replaced by
$\mu'(n+1)$, and choosing $n_0=n+1$. In particular, this implies that property
\emph{(v)}.3 is satisfied for $n+1$.

$$
\xymatrix{ 
\mathrm{Leb} \ar@{-->}[rr]   ^{\Omega}   \ar@{>}[dd]   _{\pi_{n}}
&& \mathrm{Leb} = \mathrm{Leb}_{\bar A^c} + \mathrm{Leb}_{\bar A}
\ar@{>}[d]   ^{q}
 \\
 && \frac{\alpha_{n+1}}{\alpha_{n}} \mathrm{Leb} + \frac{a_{n+1}}{\alpha_{n}}\delta_{\TT^1 \times \{0\} }
  \ar@{>}[dll]   _{\pi_{n+1}}
 \\
\mu'(n)  = \frac{\alpha_{n+1}}{\alpha_{n}} \mu'(n+1) + \frac{a_{n+1}}{\alpha_{n}} \delta_{\Gamma_{n+1}}
}
$$

Now the map $\pi_{n+1} \circ q$ is fibre-respecting and increasing, and it is
 easy to see that it sends the Lebesgue measure on the measure $\mu'(n)$. The
 map $\pi_{n}$ shares the same properties.  According to
 proposition~\ref{p.uniqueness-pi}, there exists a fibered rotation $\Omega$
 such that $\pi_{n} = \pi_{n+1} \circ q \circ \Omega$.  We take $P_{n+1} = q
 \circ \Omega \circ P_{n}$.  Note that $\pi_{n+1} \circ P_{n+1} = \pi_{n}
 \circ P_{n} = \pi$, so that property \textit{(v)}.4 is satisfied for $n+1$.
 We have the following commutative diagram.
$$
\xymatrix{ 
\TT^2 
\ar@{>}[ddd]   _{\pi}
\ar@{>}[dr]   ^{P_{n}}
\ar@{>}   `[drrr]  `[ddrr]  ^{P_{n+1}}  [ddrr]
\\
&\TT^2 \ar@{>}[r]   ^{\Omega}   \ar@{>}[ldd]   ^{\pi_{n}}
& \TT^2
\ar@{>}[d]   ^{q}
&
 \\
 && \TT^2
  \ar@{>}[dll]   ^{\pi_{n+1}}
 \\
\TT^2
}
$$ Let $A$ be the inverse image by $\Omega$ of the annulus $\bar A=\TT^1\times
[0,\frac{a_{n+1}}{\alpha_n}]$. Since $\Omega$ is a fibered rotation, by
definition of $\bar A$, there exists a continuous map $\sigma : \kreis
\rightarrow \kreis$ such that
$$
A=\left\{(\theta,x), \; \sigma(\theta)\leq x \leq \sigma(\theta)+\frac{a_{n+1}}{\alpha_{n}}\right\}.
$$ By the definitions of the maps $\Omega,q,\pi_{n+1}$, the annulus $A$ is
contained in $\pi_{n}^{-1}(\Gamma_{n+1})$.  \def\inte{\mathrm{Inte}}
\def\adhe{\mathrm{Adhe}} Let $\De_0,\dots,\De_n$ be the $n+1$ continuous
graphs $P_n(U_0),\dots,P_n(U_n)$, provided by property (v).2 for $n$.  Let
$$
U_{n+1} =  P_{n}^{-1}\left(\inte(A) \setminus (\De_{0} \cup \cdots \cup \De_{n})\right).
$$ This is clearly an open set that is disjoint from $U_0,\dots, U_n$. Thus
(ii) is satisfied for $n+1$.  We now check the remaining properties
at step $n+1$.  By construction, we have $\pi_{n}(A) = \Gamma_{n+1}$, so that
$P_{n}^{-1}(A) \subset \pi^{-1}(\Gamma_{n+1}) = B_{n+1}$.  Hence, we have
(i) for $n+1$.  Note that for any $\theta$
$$\left(\inte(A) \setminus (\De_{0} \cup \cdots \cup \De_{n})\right)_{\theta}=
\inte(A_\theta)\setminus (\De_{0,\theta} \cup \cdots \cup \De_{n,\theta}).$$
Thus this set is the union of at most $n+1$ intervals.
Hence, since $P_{n+1,\theta}$ is increasing, so is the set
$$U_{n+1,\theta}=P_{n,\theta}^{-1}\left(\inte(A_\theta)\setminus
(\De_{0,\theta} \cup \cdots \cup \De_{n,\theta})\right).$$ This gives
(iv-bis).  Since $U_{n+1,\theta}$ is disjoint from
$U_{0,\theta}\cup\dots\cup U_{n,\theta}$, by the induction hypothesis
\textit{(v)}.1 for $n$ and the definition of $A$, we have
$$
\Leb(U_{n+1,\theta})=\alpha_n\Leb\left(\inte(A_\theta)\setminus
(\De_{0,\theta} \cup \cdots \cup \De_{n,\theta})\right)=\alpha_n\Leb
(A_\theta)=a_{n+1}.
$$
 This yields (iii) for $n+1$. Furthermore
$(P_{n,\theta})_* (\mbox{Leb}_{|U_{n+1,\theta}} ) =\alpha_{n}\mbox{Leb}_{|A_{\theta}}$.
Using the induction hypothesis \textit{(v)}.1 for $n$, we see that 
$$ (P_{n,\theta})_* (\mbox{Leb}_{|(U_{0,\theta} \cup \cdots \cup
U_{n+1,\theta})^c} ) = \alpha_{n}\mbox{Leb}_{|A_\theta^c}.
$$ As $\Omega_\theta$ is just a rotation, and $q_\theta$ maps
 $\mbox{Leb}_{|\bar A_\theta^c}$ to $(1-\frac{a_{n+1}}{\alpha_n})\mbox{Leb}$, the
 projection $P_{n+1}$ satisfies property \textit{(v)}.1.  By construction,
 $P_{n+1}(U_{n+1})=\TT^1\times\{0\}$, and this implies property \textit{(v)}.2
 for $n+1$.
\end{proof}

\begin{proof}[Proof of addendum~\ref{p.V}]
Let us fix $\varepsilon>0$ and $n\in \ZZ$.  For a given $\theta$, the Lebesgue
measure of $U_{n,\theta}$ is equal to $a_n$.  Hence, there exists a closed set
$V_\theta\subset \TT^1$ contained in $U_{n,\theta}$ whose Lebesgue measure is
larger than $(1-\varepsilon)a_n$.  Since $U_n$ is open, for any $\theta'$ in a
closed neighbourhood $W_\theta$ of $\theta\in \TT^1$ we still have
$V_{\theta}\subset U_{n,\theta'}$.  Consider $\theta_1,\dots,\theta_s$ such
that $\TT^1$ is covered by the $W_{\theta_1},\dots, W_{\theta_s}$.  The
proposition now holds with the set
$$V_n=\bigcup_{i=1}^s W_{\theta_i}\times V_{\theta_i}.$$
\end{proof}


\subsection{The measure $\nu$}
\label{Nu}
 
\emph{From now on, we assume that $\Gamma_{n}=R^n(\Gamma)$, where $\Gamma$ is
a curve having flat intersection with all its iterates.} Furthermore, we
consider a measure $\mu$ as defined by~\eqref{eq:mudef} and a map $\pi$ as
provided by proposition~\ref{p.pi}.

As mentioned at the beginning of this section, we construct the homeomorphism
$f$ via a measure $\nu$ on \ntorus. We will require that $\nu$ satisfies the
following properties:
\begin{itemize}
\item[$(\nu1)$]  For all $\theta \in \kreis$ the measure $\nu_\theta$ is
  continuous (has no atom) and has full support.
\item[$(\nu2)$]The mapping $\theta \mapsto \nu_\theta$ is continuous.
\item[$(\nu3)$] $\pi_*\nu \ = \ R_*\mu$.
\item[$(\nu4)$] In case $R$ is a diffeomorphism which preserves the Lebesgue
measure, $\nu$ has a continuous and positive density with respect to the
Lebesgue measure.
\end{itemize}
Note that $R_*\mu=\beta.R_*\mbox{Leb}+\sum_{n\in\ZZ} a_n
\delta_{\Gamma_{n+1}}$.

\subsubsection{Construction of $\nu$: the continuous case}
In order to construct the measure $\nu$, we first note that, if $\epsilon >
0,\ U_n$ and $V_n$ are chosen as in proposition~\ref{lem:partition}, then due
to the Lemma of Urysohn there exist continuous functions $g_n : \ntorus \ra
[0,1]$ such that \romanlist
\item[$(g1)$] \quad $g_n^{-1}(0,1] \ = \ U_n$;
\item[$(g2)$] \quad $(1-\epsilon) a_n \ \leq \ b_n(\theta) \ \leq \ a_n \ $ where $
  \ b_n(\theta) \ := \ \int_{U_{n,\theta}} g_n(\theta,x) \ dx$.  \listend
(The lower bound in $(g2)$ can be ensured just by requiring $g_{n\mid V_n} =1$.)
Now let
$$ g\thx \ := \ \sum_{n\in\Z} \frac{a_{n}}{b_{n+1}(\theta)}\cdot g_{n+1}\thx .$$
\noindent
  This positive function is in ${\cal L}^1_{\Leb}(\ntorus)$, since the ${\cal L}^1_{\Leb}$-norm
  of the $n$-th term in the sum is exactly $a_n$.
  \footnote{Note that the function $g$ is not continuous since
  $\frac{a_n}{b_{n+1}(\theta)}$ does not tend to $0$.}
For the same reason, for any $\theta$, $x\mapsto g_n(\theta,x)$ is in ${\cal L}^1_{\Leb}(\kreis)$
and has a norm bounded by $a_n$. For each $n$ the mapping $$\kreis \to {\cal L}^1_{\Leb}(\kreis) \quad ,
\quad \theta \mapsto g_n(\theta,\ldot)$$ is continuous, the same is true for
$\theta \mapsto g(\theta,\ldot)$ (as the uniform limit of a sequence of
continuous functions from $\TT^1$ to ${\cal
L}^1_{\Leb}(\kreis)$). Consequently, if we let $\nu^1 := g\Leb$, then $\theta
\mapsto \nu^1_\theta$ is continuous with respect to the topology of weak
convergence.  Due to $(g1)$ the function $x \mapsto g\thx$ is strictly
positive on $\bigcup_{n\in\Z} U_{n,\theta}$, and due to (iii) in
proposition \ref{lem:partition} this set is dense in $B_\theta$ (remember that
$B = \pi^{-1}(\Xi)$). Thus $\nu^1_\theta$ has full support in $B_\theta$. By
construction, the measure $\nu^1$ projects to $\sum_{n\in\Z} a_n
\delta_{\Gamma_{n+1}}$.

Further, as $\pi$ is injective on $B^c = \pi^{-1}(\Xi^c)$ and $(R_*\Leb)(\Xi)
= 0$, the measure $\nu^2$ defined by $\nu^2(A) := \beta(R_*\Leb)\circ\pi(A)$
is well-defined and obviously projects to $\beta(R_*\Leb)$. In addition,
$\theta \mapsto \nu^2_\theta$ is continuous, and the fibre measures
$\nu^2_\theta$ are continuous and have full support on $\mathrm{int}(B^c_\theta)$. Altogether,
this implies that
\[
   \nu \ := \ \nu^1 + \nu^2
\]
satisfies $(\nu1)$--$(\nu3)$. 

\subsubsection{Construction of $\nu$: the differentiable case}
\label{Nu-continuous} As above
we suppose that $\epsilon,U_n$ and $V_n$ are chosen as in
propositions~\ref{lem:partition} and~\ref{p.V}. Further, we assume that $R$ is
a diffeomorphism which preserves Lebesgue measure. Note that $\pi$ projects
$\Leb_{|B^c}$ to $\beta \Leb$, and $\Leb_{|U_n}$ to $a_n
\delta_{\Gamma_n}$. Since $R_*\mu=\beta\mbox{Leb}+\sum_{n\in\ZZ} a_n
\delta_{\Gamma_{n+1}}$, we will construct the measure $\nu = h\mbox{Leb}$
by defining a continuous density $h$ which satisfies $h_{|B^c} \equiv 1$ and
\begin{equation} \label{eq:hcond}
  \int_{U_{n+1,\theta}} h(\theta,x) \ dx \ = \ a_n \quad \forall \theta \in
  \kreis,\ n \in \Z \ .
\end{equation}
This is done by 
\begin{equation} \label{eq:hdef}
  h\thx \ := \ 1 - \sum_{n\in\Z} (a_{n+1} - a_n) \cdot
  \frac{g_{n+1}\thx}{b_{n+1}(\theta)} \ .
\end{equation}
It is obvious by construction that $h$ satisfies (\ref{eq:hcond}) and
therefore projects to $R_*\mu$.

 It remains to show that, for a suitable choice of $a_n$ in (\ref{eq:mudef})
and $\epsilon$ in proposition~\ref{lem:partition}, the function $h$ is continuous
and strictly positive, such that $(\nu4)$ (and thus also $(\nu1)$ and
$(\nu2)$) hold. In order to see this, recall that $\| g_n \|_{{\cal C}^0} =
1$. Further 
\begin{equation} \label{eq:andiff}
\left|\frac{a_{n+1} - a_n}{b_{n+1}(\theta)}\right| \ \leq \
\frac{|a_{n+1}-a_n|}{(1-\epsilon) a_n} \ ,
\end{equation}
and for a suitable choice of the sequence $a_n$ (fixing any $\varepsilon\in
(0,1)$ one can choose $a_n = (|n|+k)^{-2}$ for sufficiently large $k$),
we have $\sum_{n\in\Z} a_n < 1$, the
right side of (\ref{eq:andiff}) is strictly smaller than 1 and converges to 0
as $n \to \infty$. Since all the $g_{n}$ have disjoint support, this implies
that $h$ will be continuous and strictly positive as required. Further, if $k$
is large enough we also have $\sum_{n\in\Z} a_n < 1$.

\subsubsection{${\cal C}^{1+\alpha}$-Examples} \label{Nu-Hoelder}
 Without going too much into
detail, we remark that at least in the case where $R$ leaves the Lebesgue
 measure invariant, as in the case of an irrational translation of the torus,
 it is possible to obtain examples with ${\cal C}^{1+\alpha}$ fibre maps for
 any $\alpha \in (0,1/2)$. For this, it suffices that the density $h$
 constructed above is $\alpha$-H\"older continuous with respect to $\theta$.
  
Although this is not explicitly stated in lemma~\ref{lem:partition}, it is
obvious from the construction of the sets $U_n$ in the proof of this lemma
that the functions $\thx \mapsto d(x,U_{n,\theta}^c)$ are continuous. In
addition, due to (iv) in proposition~\ref{lem:partition}, the sets
$U_{n,\theta}$ consist of at most $2| n| +1$ connected components on each
fibre. Therefore, the functions
\[
g_n\thx \ := \ \min\left\{1,\left(\frac{4| n|+2}{\epsilon
a_n}d(x,U_{n,\theta}^c)\right)^\alpha\right\}
\]
satisfy $(g1)$ and $(g2)$. In addition, they are $\alpha$-H\"older continuous
with respect to $\theta$ with H\"older-constant $((4| n| +2)/(\epsilon a_n))^\alpha$. In
(\ref{eq:hdef}), the functions $g_{n+1}$ are multiplied with a factor $\leq
|a_{n+1}-a_n|/(1-\epsilon)a_n$, such that the resulting product has
$\alpha$-H\"older constant $\leq \frac{((4|
n|+2)/\epsilon)^\alpha(a_{n+1}-a_n)}{(1-\epsilon)a_n^{1+\alpha}}$. Hence, if
we choose $(a_n)_{n\in\Z}$ such that
\begin{equation} \label{eq:ancond}
 \sup_{n\in\Z} \frac{| n|^\alpha | a_{n+1}-a_n|}{a_n^{\alpha+1}} \ < \ \infty \ , 
\end{equation}
then the resulting sum in (\ref{eq:hdef}) is $\alpha$-H\"older continuous
(since the functions $g_n$ all have disjoint
support). However, (\ref{eq:ancond}) is true for any sequence $a_n = |n +
k|^{-s}$ with $k \geq 1$ and $s \in (1,1/\alpha-1)$.

Of course, one should expect that the construction works for all $\alpha \in
(0,1)$. For this, one would need to show that the sets $U_n$ in
proposition~\ref{lem:partition} can be chosen such that the number of
connected components is bounded by a constant independent of $n$. Since this
would make the proof much more complicated, we refrained from doing so. 


\subsection{The homeomorphism~$f$}

In this section, we consider a projection $\pi$ as provided by
proposition~\ref{p.pi} and a measure $\nu$ which satisfies $(\nu1)$--$(\nu4)$,
as constructed in the previous sections.  We denote by $\gamma_{0},\gamma_{1}:
\TT^1 \ra \TT^1$ the maps whose respective graphs are $\Gamma_{0}$ and
$\Gamma_{1}$, and define the two (discontinuous) maps $\varphi_{0}^-,
\varphi_{1}^-$ by letting $\varphi_{i}^-(\theta)=
\mbox{Inf}(\pi^{-1}(\Gamma_{i}))$ (this has a meaning since $\TT^1$ is an
oriented circle).

\paragraph{Definition of $f$ and verification of  $\pi \circ f = R \circ \pi$.}
 Due to $(\nu1)$, the map
\[
  \begin{array}{ccccc}\eta_\theta &: &\kreis \ra \kreis &,& x \mapsto
  \nu_\theta[\varphi^-_1(\theta),x] \ \bmod 1\end{array}
\]
is a homeomorphism. Using this fact, we let
\begin{equation} 
  f_\theta(x) \ := \ \eta_{\thom}^{-1}(\Leb[\varphi_0^-(\theta),x])
\end{equation}
In other words, we simply define $f$ by requiring that it maps $\varphi_0^-$
to $\varphi^-_1$ and sends the Lebesgue measure to $\nu$, so that
\begin{equation}\label{eq:f-nu}
  \nu_{\thom}[\varphi_1^-(\thom),f_\theta(x)] \ = \
  \Leb[\varphi_0^-(\theta),x] \ .
\end{equation}
For arbitrary $x_1,x_2\in \kreis$, we obtain
\begin{equation} \label{eq:f-nuII}
  \nu_{\thom}[f_\theta(x_1),f_\theta(x_2)] \ = \ \Leb[x_1,x_2] \ .
\end{equation}
As all fibre maps $f_\theta$ are circle homeomorphisms, the map $f$ is
bijective. 

We remark that in the case where the fibre measures $\nu_\theta$ have
continuous ($\alpha$-H\"older-continuous) densities, as in sections
\ref{Nu-continuous} and \ref{Nu-Hoelder}, it follows immediately from the
definition that the fibre maps $f_\theta$ are ${\cal C}^1$ (${\cal
C}^{1+\alpha}$). \bigskip

In order to verify the relation $\pi \circ f = R \circ \pi$, it suffices to
check that for every $z=\thx\in\ntorus$ there holds
\begin{equation} \label{e.commuting-property}
f ( \pi^{-1}(z)) = \pi^{-1}(R(z)).
\end{equation}
Writing $\pi_\theta^{-1}(x) = [\xi^-_0,\xi^+_0]$ and
$\pi_{\thom}^{-1}(R_\theta(x)) = [\xi^-_1,\xi^+_1]$,
(\ref{e.commuting-property}) becomes $f_\theta(\xi^\pm_0) = \xi_1^\pm$.
 We claim that
\begin{equation} \label{e.commutingII}
\nu_{\thom}[\varphi^-_1(\thom),f_\theta(\xi^\pm_0)] \ =
\nu_{\thom}[\varphi^-_1(\thom),\xi_1^\pm] \ .
\end{equation}
Since the measure $\nu_{\thom}$ has full support, this will imply
(\ref{e.commuting-property}). To check~(\ref{e.commutingII}) for the left endpoints, we compute
\begin{eqnarray*}
 \lefteqn{\nu_{\thom}[\varphi^-_1(\thom),f_\theta(\xi^-_0)] \ \stackrel{(\ref{eq:f-nu})}{=} } \\
&  & \Leb[\varphi^-_0(\theta),\xi^-_0] \  \
 = \ \ \mu_\theta[\gamma_0(\theta),x) \\ &  = &
 (R_*\mu)_{\thom}[\gamma_1(\thom),R_\theta(x)) \ \stackrel{(\nu 3)}{=} \
 \nu_{\thom}[\varphi^-_1(\thom),\xi^-_1] \ .
\end{eqnarray*}
The argument for the right endpoints is similar.


\paragraph{Continuity of $f$.} We first show that $f$ is continuous in
$(\theta,x)$ if $(\theta,x) \notin B$. Recall that this means that
$\pi(\theta,x)$ is not contained in $\Xi=\bigcup_{n\in\Z} R^n(\Gamma)$, and
consequently the same is true for the point $z=R\circ\pi(\theta,x)$. Therefore
$z=R\circ\pi(\theta,x)$ has a unique preimage under $\pi$. As $R\circ\pi =
\pi\circ f$, this preimage is $f\thx$.  Since $\pi$ is continuous and due to
compactness, it follows that for any point $z'$ which is sufficiently close to
$z$, the preimage $\pi^{-1}(\{z'\})$ is contained in a small neighbourhood of
$f\thx$. Now suppose $(\theta',x')$ is close to $\thx$. Then
$z':=R\circ\pi(\theta',x')$ is close to $z$ by continuity. As $f(\theta',x')$
must be contained in $\pi^{-1}(\{z'\})$, this shows that $f(\theta',x')$ is
close to $f\thx$. Hence $f$ is continuous in \thx.

Now let $\thx\in\ntorus$ be arbitrary. The set $B^c$ intersects every fibre, since
this is obviously true for its image $\Xi^c$ under $\pi$. Consequently, there
exists a continuity point $(\theta,x_0)$ of $f$ on the same fibre. Suppose
that $(\theta',x')$ is a point close to \thx. Then (\ref{eq:f-nuII}) implies
that $\nu_{\theta'+\omega}([f_{\theta'}(x_0),f_{\theta'}(x')])$ is close to
$\nu_{\theta+\omega}([f_{\theta}(x_0),f_{\theta}(x)])$. As $f$ is continuous
in $(\theta,x_0)$, the points $f_{\theta'}(x_0)$ and $f_{\theta}(x_0)$ are
close. Therefore, the fact that $\nu_{\theta+\omega}$ has full support
and the continuity of $\theta\mapsto\nu_\theta$ imply that
$f_{\theta'}(x')$ has to be close to $f_{\theta}(x)$. Thus, $f$ is continuous
on all of \ntorus.
\bigskip

\subsection{Proof of addendum~\ref{add:transitivity}}
\label{ss.proof-adde}

Suppose that $R$ is transitive, $\Gamma$ has the additional property provided
by addendum \ref{addendum} and $\Xi$ is dense.  We have to show that given any
two open sets $U,V \ssq \ntorus$ there exists some $n\in\N$ such that $f^n(U)
\cap V \neq \emptyset$.

First, we claim that the interior of $B= \pi^{-1}(\Xi)$ is dense. For suppose
that there exists an open set $W \ssq B^c$. For any point $z \in W$, since
$\pi^{-1}\{z\}$ is a singleton, compactness and continuity of $\pi$ imply
that there exists some neighbourhood $\tilde W$ of $\pi(z)$, such that
$\pi^{-1}(\tilde W) \subset W$.  However, as $\tilde W$ is clearly disjoint
from $\Xi$, this contradicts the assumptions.

Thus, by reducing both sets further if necessary, we can assume that $U$ and
$V$ are two small rectangles which are included in $B$.  In fact, we can even
restrict to the case where $U \ssq B_k$ and $V \ssq B_l$ for some $k,l \in
\Z$, and that both $I:= p_1(U)$ and $J:= p_1(V)$ are intervals. However, due
to the choice of $\Gamma$ there exists some $n\in\N$ such that
$R^n(\Gamma_{k\mid I})$ crosses $\Gamma_{l\mid J}$ over $I+n\omega \cap J$. As
all the maps $\pi_\theta$ are order-preserving this implies that $f^n(U)$ has
to cross $V$ over $I':=(I +n\omega) \cap J$ (more precisely: if $O$ is chosen
as in definition~\ref{def:flatintersections}, then $\pi^{-1}(I' \times O)
\setminus V$ consists of two connected components, and $f^n(U)$ intersects
both of them). However, this is only possible if $f^n(U)$ intersects $V$.

\begin{bem} \label{r.proof-adde}
If $R$ is transitive, it is always possible to construct a curve $\Gamma$
such that $\Xi$ is dense, as required by addendum~\ref{add:transitivity}. For
this, it suffices to choose each $\Gamma_n$ in the construction of $\Gamma$ in
section~\ref{section.curve}, such that it contains a point with dense
orbit. This is possible due to remark~\ref{r.perturbation}. It follows that
for each $n\in\N$ there exists an integer $N_n$, such that the first $N_n$
iterates of $\Gamma_n$ are $1/n$-dense in $\ntorus$. If all subsequent
perturbations are chosen small enough, then for all $k\geq n$ the first $N_n$
iterates of $\Gamma_k$ will be $2/n$-dense. In the limit, this gives the
required property.
\end{bem}

\section{The minimal set} \label{Structure}

In this section, we collect some general results which concern the properties
and structure of the minimal sets of a transitive but non minimal qpf circle
homeomorphism. It is known that in the absence of invariant strips such a set
must have a complicated structure:

\begin{prop}[{\cite[theorem 4.5 and lemma 4.6]{jaeger/keller:2006}}]
\label{prop.jeakel}
  Suppose $f\in{\cal F}$ has no invariant strips and is transitive, but not
  minimal. Then any minimal invariant set $M$ which is a strict subset of
  \ntorus\ has the following two properties: 
  \alphlist
\item Every connected component $C$ of $M$ is a vertical segment, i.e.\
  $\#p_1(C) = 1$. 
\item For every open set $U\ssq \ntorus$, the set $p_1(U\cap M)$ is either
  empty or it contains an interval. 
\listend
\end{prop}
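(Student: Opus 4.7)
The plan is to handle the two assertions separately, with (b) being a short Baire-category argument and (a) requiring the extraction of an invariant strip from a non-vertical component.

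For property (b), I would argue by contradiction. Assume $U$ is open with $p_1(U\cap M)$ non-empty but with empty interior. Pick any $(\theta_{0},x_{0})\in U\cap M$ and choose a smaller open neighbourhood $V$ with compact closure $\overline{V}\subset U$. Then $\overline{V}\cap M$ is compact, so $K:=p_1(\overline{V}\cap M)$ is a compact subset of $p_1(U\cap M)$; in particular $K$ is closed with empty interior, hence nowhere dense in $\kreis$. Since $M$ is minimal and $V\cap M$ is a non-empty relatively open subset of $M$, every point of $M$ has some forward iterate in $V\cap M$, which gives
\[
M \ \subseteq\ \bigcup_{n\geq 0} f^{-n}(\overline{V}\cap M).
\]
Projecting and using $p_1\circ f^{-1}(z)=z-\omega$, one obtains $\kreis=p_1(M)\subseteq\bigcup_{n\geq 0}(K-n\omega)$. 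This expresses $\kreis$ as a countable union of closed nowhere dense sets, contradicting the Baire category theorem.

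For property (a), I would argue contrapositively: assuming a connected component $C$ of $M$ satisfies $\#p_1(C)\geq 2$, I would build an invariant strip, contradicting the standing hypothesis. Since $C$ is compact and connected, $I:=p_1(C)$ is a non-degenerate compact arc of $\kreis$. Working on a lift above $I$, define
\[
\phi^{+}(\theta)=\sup\{x:(\theta,x)\in C\},\qquad \phi^{-}(\theta)=\inf\{x:(\theta,x)\in C\}\qquad(\theta\in I);
\]
these are upper resp.\ lower semicontinuous, their graphs sit inside $C\subset M$, and the ``filled'' region $\{(\theta,x):\theta\in I,\ \phi^{-}(\theta)\leq x\leq\phi^{+}(\theta)\}$ gives a vertical-segment-valued upper semicontinuous multifunction over $I$.

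The next step is to iterate and close. Consider the closure $\Psi$ of the orbit of this multifunction under $f$, taken with respect to the Hausdorff topology on the space of compact vertical-segment-valued multifunctions over arcs of $\kreis$. Because $\omega$ is irrational, the translates $I+n\omega$ are dense in $\kreis$, and by compactness one can extract subsequential limits producing a closed, $f$-invariant, vertical-segment-valued multifunction defined over \emph{all} of $\kreis$ whose graph is contained in $M$. Its graph is then a compact $f$-invariant set which, on each fibre, consists of finitely many compact vertical intervals with the required upper/lower semicontinuity, \emph{i.e.}\ an invariant strip (in the sense of \cite{jaeger/stark:2006,jaeger/keller:2006}, taking the connected components into account to extract the $(p,q)$-periodic structure). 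This contradicts the assumption that $f$ has no invariant strip.

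The main obstacle is the second step of (a): turning an upper/lower semicontinuous structure defined only over the arc $I$ into a genuine invariant strip over all of $\kreis$. The delicate point is controlling the number of connected components on each fibre and verifying the regularity conditions in the formal definition of a $(p,q)$-invariant strip; this is where the minimality of $M$ and the fact that $f$ is fibre-preserving over an irrational rotation are essential, since they force the limits obtained by iterating to inherit a coherent periodic structure across fibres.
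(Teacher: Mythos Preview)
The paper does not contain a proof of this proposition: it is quoted verbatim from \cite[theorem~4.5 and lemma~4.6]{jaeger/keller:2006} and used as a black box, so there is no argument in the paper to compare against.

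On the merits of your sketch: your argument for (b) is correct and essentially the standard one. Minimality of $M$ gives $M\subseteq\bigcup_{n\geq0}f^{-n}(\overline V\cap M)$, projecting gives $\kreis\subseteq\bigcup_{n\geq0}(K-n\omega)$ with each $K-n\omega$ closed and nowhere dense, and Baire finishes it. (The step $p_1(M)=\kreis$ is justified since $p_1(M)$ is a non-empty closed set invariant under the irrational rotation.)

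For (a), however, there is a genuine gap, and you yourself flag it. The ``iterate and close'' step is not an argument: you propose to take Hausdorff limits of filled components over translated arcs and declare that the result is an invariant strip, but nothing in what you wrote controls the number of connected components per fibre, establishes the required semicontinuity of the interval endpoints globally, or produces the $(p,q)$-periodic structure demanded by the definition. Limits of segment-valued multifunctions over varying arcs need not patch together into a multifunction over all of $\kreis$ with a fixed finite number of intervals per fibre; in fact, without further input one would expect the fibres to degenerate or proliferate in an uncontrolled way. The actual proof in \cite{jaeger/keller:2006} uses more structure (in particular the semi-conjugacy to an irrational rotation or the analysis of the complement of $M$) rather than a direct compactness argument of the kind you sketch. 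As written, part (a) is not a proof but a plausibility outline with its central difficulty left open.
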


\subsection{Uniqueness and structure}\label{ss.uniqueness-structure}

We start by giving two criteria for the uniqueness of the minimal set. (For a
previous partial result on quasiperiodic $\mbox{SL}(2,\R)$-cocycles,
see~\cite[section 4.17]{herman:1983}.)

\begin{prop}
Suppose $f\in {\cal F}$ has no invariant strip, or is transitive.
Then there is only one $f$-invariant minimal set.
\end{prop}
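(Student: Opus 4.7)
The plan is to argue the contrapositive: if $f\in{\cal F}$ admits two distinct $f$-invariant minimal sets $M_1$ and $M_2$, then $f$ both admits an invariant strip and fails to be topologically transitive. Two distinct minimal sets are automatically disjoint, so I may assume $M_1\cap M_2=\emptyset$. The first step is to note that each $M_i$ projects onto the full base: $p_1(M_i)$ is a closed subset of $\kreis$ invariant under the irrational base rotation $\theta\mapsto\theta+\omega$, and hence equals $\kreis$ by its minimality. So for every $\theta$, the fibers $M_{1,\theta}$ and $M_{2,\theta}$ are disjoint, non-empty compact subsets of $\kreis$.

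The heart of the argument is a construction of an invariant strip from this data. For each $\theta$, I decompose $\kreis\setminus(M_{1,\theta}\cup M_{2,\theta})$ into its open connected arcs and distinguish the \emph{mixed} ones, those with one endpoint in $M_1$ and one in $M_2$. Going around the circle one must alternate between $M_1$- and $M_2$-regions, so the number of mixed arcs in each fiber is a positive even integer $2k(\theta)$; because $f_\theta$ is an orientation-preserving homeomorphism sending $M_{i,\theta}$ onto $M_{i,\theta+\omega}$, the function $k$ is invariant under the base rotation, hence constant by minimality. Picking points from the mixed arcs in a semi-continuous fashion yields finitely many graph-like $f$-invariant curves; together with $M_1\cup M_2$, they bound a closed $f$-invariant set $S$ meeting each fiber in finitely many closed intervals, i.e.\ an invariant strip in the sense of~\cite{jaeger/stark:2006} (in degenerate cases one of the $M_i$ itself is already a continuous graph, hence already an invariant strip). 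This contradicts the ``no invariant strip'' hypothesis.

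For the transitive case, I exploit the same gap structure to produce a non-empty $f$-invariant open subset $U\subsetneq\ntorus$: within each fiber, choose the interior of the gap between $M_{1,\theta}$ and $M_{2,\theta}$ sitting inside a distinguished mixed arc (equivalently, inside a chosen component of $S\setminus(M_1\cup M_2)$). The order-preservation of $f_\theta$ together with the constancy of $k(\theta)$ allows this choice to be propagated consistently around the base, producing an open $f$-invariant set $U$. Since $U$ is open, non-empty, and proper, its closure misses some non-empty open set $V$, and $f^n(U)\cap V = U\cap V = \emptyset$ for every $n$, contradicting topological transitivity.

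The main obstacle is the construction of the invariant strip in the middle step. One must verify that the gap-counting function $k(\theta)$ is indeed a finite constant, and that the fiberwise selection of separating points assembles into a compact set with the regularity required by the strip definition of~\cite{jaeger/stark:2006}; both points hinge on the orientation-preserving nature of the fiber maps $f_\theta$ and the minimality of the base rotation. Once the strip is in place, both halves of the hypothesis (``no invariant strip'' and ``transitive'') yield contradictions as described, giving the uniqueness.
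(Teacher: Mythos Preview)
Your transitive case is close in spirit to the paper's, but the paper's execution is cleaner and self-contained: rather than selecting one distinguished mixed arc per fibre and tracking how $f$ permutes them, the paper lets $U_1$ be the union of \emph{all} vertical segments $\{\theta\}\times(x,y)$ disjoint from $K\cup K'$ with $(\theta,x)\in K$ and $(\theta,y)\in K'$, and $U_2$ the symmetric object. Invariance of $U_1,U_2$ is then automatic from order-preservation, and the only work is to show that $\inte(U_1)$ and $\inte(U_2)$ are non-empty, which is done by looking at a common continuity point of the semi-continuous maps $\theta\mapsto K_\theta$ and $\theta\mapsto K'_\theta$. Your version, as written, routes the choice of arc through the set $S$, which is the object you have not actually constructed.

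The genuine gap is in the ``no invariant strip'' half. You assert that two disjoint minimal sets force an invariant strip, but you do not prove it. First, invariance of $k(\theta)$ under the base rotation together with minimality of that rotation does not by itself give constancy: you need some semi-continuity of $k$, which you neither state nor verify. Second, ``picking points from the mixed arcs in a semi-continuous fashion'' is a hope rather than a construction; there is no canonical $f$-invariant choice of point inside a mixed arc, and you give no argument that any selection assembles into a compact set meeting the (rather technical) regularity requirements of the strip definition in~\cite{jaeger/stark:2006}. The paper sidesteps all of this: once the transitive case is handled, it treats ``no invariant strip and not transitive'' by invoking Theorem~\ref{thm:poincare} to conclude that $f$ is semi-conjugate to an irrational torus translation, and then Furstenberg's theorem~\cite[theorem~4.1]{furstenberg:1961} gives unique ergodicity and hence a unique minimal set. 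Your direct route is an appealing idea, but the step you flag as ``the main obstacle'' is precisely the one you leave unproved, and it is not clear it can be completed without essentially reproving the classification.
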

\begin{proof}
We choose an orientation on the circle, so that the segment $(x,y)$ is well
defined whenever $x,y \in \kreis$.  Suppose that there exist two minimal sets
$K\neq K'$. Then $K \cap K' = \emptyset$.  We define the set $U_{1}$ as the
union of all vertical segments $\{\theta\}\times (x,y)$ which are disjoint
from $K\cup K'$ and satisfy $(\theta,x)\in K$ and $(\theta,y)\in K'$.
Similarly we define $U_{2}$ as the union of all vertical segments
$\{\theta\}\times (x,y)$ which are disjoint from $K\cup K'$ and satisfy
$(\theta,x)\in K'$ and $(\theta,y)\in K$.  These two sets are clearly disjoint
and intersect each fibre $\{\theta\}\times \kreis$.  As all fibre maps
$f_{\theta}$ are order preserving, they are also invariant.  Let us prove that
they have non-empty interior. The mappings $\theta\mapsto K_{\theta}$ and
$\theta\mapsto K'_{\theta}$ are semi-continuous, hence their sets of
continuity points are two dense $G_{\delta }$-sets; in particular they have a
common point $\theta_{0}$.  It is easy to see that all the points
$(\theta_{0},x)$ in $U_{1}$ belong to the interior of $U_{1}$, similar for
$U_{2}$.  Thus $\inte(U_{1})$ and $\inte(U_{2})$ are two disjoint non-empty
open invariant sets, and $f$ is not transitive.

It remains to consider the case when $f$ has no invariant strip and is not transitive.
By theorem~\ref{thm:poincare},  $f$ is semi-conjugate to an irrational torus rotation.
Therefore, due to a result by Furstenberg (see~\cite[theorem 4.1]{furstenberg:1961}),
$f$ is uniquely ergodic, hence it has a unique minimal set.
\end{proof}

Concerning the number of connected components in each fibre, we have the
following:
\begin{prop}\label{prop.c-of-K}
Let $f$ be a qpf circle homeomorphism, and $K$ a minimal set for $f$.
Let $c(\theta) \in \NN \cup\{\infty\}$ be the number of connected components of $K_{\theta}$.
Let $c(K)= \inf_{\theta \in \kreis} c(\theta)$.
Then $c(\theta)=c(K)$ on a dense  $G_{\delta}$ subset of the circle.
\end{prop}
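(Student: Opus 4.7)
The plan is to split the statement into two parts: density of $\{c = c(K)\}$, which will follow from invariance of $c$ under the base rotation, and the $G_\delta$ property, which requires analyzing how the number of components of a fibre can vary. First I observe that $c$ is invariant under the rotation $\theta \mapsto \theta + \omega$: since $K$ is $f$-invariant and each fibre map $f_\theta$ is a homeomorphism of $\TT^1$, it sends $K_\theta$ onto $K_{\theta+\omega}$ and thereby preserves the number of connected components. Because $\omega$ is irrational, every non-empty subset of $\TT^1$ invariant under this rotation is dense, so it suffices to prove that $\{c = c(K)\}$ is non-empty and $G_\delta$. Non-emptiness is immediate: if $c(K) = \infty$ then $c \equiv \infty$, while if $c(K) < \infty$ the infimum is attained because $c$ takes only values in $\NN \cup \{\infty\}$.

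For the $G_\delta$ property I would show that $\{c \geq m\}$ is $F_\sigma$ for every integer $m \geq 1$, and then apply this with $m = c(K)+1$. The key characterization is: $c(\theta) \geq m$ if and only if there exist $2m$ pairwise disjoint open arcs $U_1, V_1, \ldots, U_m, V_m \subset \TT^1$ arranged in this cyclic order, with $K_\theta \cap U_i \neq \emptyset$ and $K_\theta \cap \overline{V_i} = \emptyset$ for every $i$. In the direction $\Leftarrow$, points chosen in each $U_i \cap K_\theta$ are pairwise separated by gaps $V_i \subset \TT^1 \setminus K_\theta$ and hence lie in distinct components of $K_\theta$; in the direction $\Rightarrow$, one picks a point in each of $m$ prescribed components together with a point in each intercomponent gap of $\TT^1 \setminus K_\theta$ and encloses these points in small disjoint arcs. (The trivial case $K = \TT^2$, where $c \equiv 1$, is handled directly.)

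Restricting to arcs from a countable dyadic basis, $\{c \geq m\}$ becomes the countable union over admissible configurations $(U_i, V_i)_{i=1}^m$ of the sets
$$
A_{\vec U, \vec V} \ = \ \bigcap_{i=1}^m \{\theta : K_\theta \cap \overline{V_i} = \emptyset\} \ \cap \ \bigcap_{i=1}^m \{\theta : K_\theta \cap U_i \neq \emptyset\}.
$$
The first factor is open, since $p_1(K \cap (\TT^1 \times \overline{V_i}))$ is a compact (hence closed) subset of $\TT^1$. The second factor is $F_\sigma$: writing the open arc $U_i$ as an increasing union of compact subarcs $F_k$, the set $\{\theta : K_\theta \cap U_i \neq \emptyset\}$ equals $\bigcup_k p_1(K \cap (\TT^1 \times F_k))$, a countable union of compact sets. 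Consequently $A_{\vec U, \vec V}$ and hence $\{c \geq m\}$ are $F_\sigma$, which gives the required $G_\delta$ level set. The main obstacle I expect is the arc characterization itself and the verification that a countable family of arcs suffices; the subsequent topological computations are then routine.
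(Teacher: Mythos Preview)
Your argument is correct, but it follows a genuinely different route from the paper's. The paper exploits the upper semi-continuity of the set-valued map $\theta \mapsto K_\theta$: the set $\Theta_1$ of its continuity points is a dense $G_\delta$, and on $\Theta_1$ the function $c$ is lower semi-continuous (if $K_{\theta_n}\to K_{\theta_0}$ in the Hausdorff metric and $K_{\theta_0}$ has $c_0$ components, then for large $n$ the set $K_{\theta_n}$ meets a small neighbourhood of each component and avoids the gaps, forcing $c(\theta_n)\geq c_0$). A further Baire argument gives a continuity point of $c_{|\Theta_1}$, hence a neighbourhood in $\Theta_1$ on which $c$ is constant; invariance under the rotation spreads this to a dense $G_\delta$ set $\Theta_2$, and the same lower semi-continuity shows the constant value equals $c(K)$.

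By contrast, you bypass semi-continuity entirely and prove directly that $\{c\geq m\}$ is $F_\sigma$ via the arc characterisation. This is more elementary and self-contained: no appeal to continuity points of set-valued maps, and the $G_\delta$ conclusion is exact rather than obtained by intersecting with an auxiliary residual set $\Theta_1$. The paper's approach is shorter and more conceptual, leaning on standard machinery; yours makes the descriptive complexity of the level sets explicit. One small remark: your arc characterisation is fine for $m\geq 2$, and the case $m=1$ is trivial since $\{c\geq 1\}=\kreis$ (minimality forces $p_1(K)=\kreis$), so the only case needed, $m=c(K)+1$, is covered.
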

\begin{proof}
Let $\Theta_{1}$ be the set of continuity points of the semi-continuous map
 $\theta \mapsto K_{\theta}$: this is a dense $G_{\delta }$ subset of the
 circle.  The restriction of the map $\theta \mapsto c(\theta)$ to
 $\Theta_{1}$ is again semi-continuous, so it admits a continuity point
 $\theta_{0}$.  By continuity there exists an open neighbourhood $U$ of
 $\theta_{0}$ such that $c$ is constant on $U \cap \Theta_{1}$. Since $c$ is
 invariant under the circle rotation $\theta \mapsto \theta + \omega$, it is
 constant on a dense open subset $\Theta_{2}$ of $\Theta_{1}$, and
 $\Theta_{2}$ is again a dense $G_{\delta }$ subset of the circle.  Let
 $c_{0}$ be the value of $c$ on $\Theta_{2}$, and let us prove that $c_{0} =
 c(K)$.  For this consider a fibre $\theta_{0} \in \Theta_{2}$. Since
 $\theta_{0}$ is a continuity point of $\theta \mapsto K_{\theta}$, there
 exists a neighbourhood $U$ of $\theta_{0}$ on which $c(\theta) \geq c_{0}$.
 Using the invariance of $c$ under the circle rotation, we see that this
 inequality holds on the whole circle.
\end{proof}

 This result raises the following question: \emph{for the unique minimal set
 $K$ of a non minimal qpf circle map with no invariant strip, can $c(K)$ be
 finite ?}

\subsection{Examples} 
\label{ss. examples}

\begin{prop}~ \label{p.examples}
\begin{enumerate}
\item \emph{There exists a transitive non minimal qpf circle homeomorphism
whose minimal set $K$ is a Cantor set and its intersection with any fibre
$\{\theta\}\times \kreis$ is uncountable; in particular $c(K)=+\infty$.}
\item \emph{There exists a transitive non minimal qpf circle homeomorphism
whose minimal set contains a vertical segment.}
\end{enumerate}
\end{prop}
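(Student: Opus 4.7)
Both parts will be derived from theorem~\ref{t.mainthm} by making appropriate choices for the minimal base $R$ and the curve $\Gamma$, and analysing the unique minimal set $M$ of the resulting $f$. Transitivity of $f$ combined with the uniqueness result of subsection~\ref{ss.uniqueness-structure} yields that $M$ is the unique minimal set of $f$. Since $R$ is minimal, $\pi(M)=\T^2$; combined with property (iii) of proposition~\ref{p.blow-up-curve} (singleton fibres of $\pi$ over $\Xi^c$) this gives the inclusion $\pi^{-1}(\Xi^c)\subset M$. Property (v) of the same proposition, together with minimality of $M$, also gives that $M$ is disjoint from $\mathrm{int}(\pi^{-1}(\Xi))$.

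For part 1, I would take $R$ to be a minimal translation $(\theta,x)\mapsto(\theta+\omega,x+\rho)$ of $\T^2$ with $\omega,\rho,1$ rationally independent, so that $\rho(f)=\rho$ is also irrationally related to $\omega$ and $1$ and hence $f$ has no invariant strip. A direct Kronecker-line argument shows that $\Xi_\theta$ is dense in $\kreis$ for every $\theta$ and every continuous choice of $\gamma$. Since $\pi^{-1}(\Xi^c)_\theta$ has positive Lebesgue measure $\beta$, $M$ has uncountable intersection with each fibre. To obtain the Cantor property I would invoke proposition~\ref{prop.jeakel}(a): connected components of $M$ are vertical segments, and a non-trivial segment $\{\theta\}\times[a,b]\subset M$ would either map under the monotone $\pi_\theta$ to a non-trivial subinterval of $\Xi_\theta^c$ (impossible by density), or sit inside a single fibre $\pi_\theta^{-1}(\{x\})$ of a blown-up point, which by remark~\ref{rem:partition}(b) meets $\mathrm{int}(\pi^{-1}(\Xi))$ densely---contradicting the disjointness above. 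So all connected components of $M$ are singletons; combined with perfectness (automatic for an infinite minimal set) this makes $M$ a Cantor set.

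For part 2 the reasoning runs in reverse: if $\Xi_\theta$ fails to be dense on some fibre $\kreis_\theta$, then $\Xi_\theta^c$ contains a non-trivial arc $J$; as $\mu_\theta$ has no atoms in $J$, the preimage $\pi_\theta^{-1}(J)$ is a non-trivial interval contained in $\pi^{-1}(\Xi^c)\subset M$, giving the required vertical segment. The task therefore reduces to exhibiting a minimal $R$ together with a flat-intersection curve $\Gamma$ produced by proposition~\ref{prop.construction-curve} such that $\Xi=\bigcup_{n\in\Z}R^n(\Gamma)$ misses an arc on some fibre. Since translations are ruled out by the Kronecker argument of part 1, I would work with a minimal skew product $R(\theta,x)=(\theta+\omega,x+g(\theta))$ having non-constant $g$, and build $\gamma$ with image confined to a proper arc of $\kreis$, adapted to $g$ so that the Birkhoff-type sums $\gamma(\theta-n\omega)+\sum_{k=0}^{n-1}g(\theta-(n-k)\omega)$ remain in a proper arc on a well-chosen fibre.

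The main obstacle is part 2: the flat-intersection property of $\Gamma$ is produced as a $C^0$-limit by the generic perturbation scheme of section~\ref{section.curve}, whereas forcing the full $R$-orbit of $\Gamma$ to omit an arc on a prescribed fibre is a strongly non-generic constraint. Simultaneously arranging these two requirements, while also keeping $R$ minimal, is the delicate step I would need to carry out.
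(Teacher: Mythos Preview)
Your treatment of part~1 is correct and essentially matches the paper's argument: take $R$ an irrational rotation with no invariant strip, note that $\Xi_\theta$ is dense for every $\theta$, identify $M$ with $\overline{\pi^{-1}(\Xi^c)}$, and combine proposition~\ref{prop.jeakel}(a) with the fibrewise density of $\mathrm{int}(\pi^{-1}(\Xi))$ to conclude that $M$ is a Cantor set with uncountable fibre slices.

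For part~2 you have correctly isolated the target---produce a minimal $R$ and a flat-intersection curve $\Gamma$ whose full orbit $\Xi$ misses a non-trivial arc on some fibre---but your proposed construction, controlling Birkhoff sums of a skew rotation so that the iterates of $\gamma$ stay in a proper arc at a fixed $\theta$, is not how the paper proceeds, and you rightly flag it as the unresolved step. Making this compatible with the $C^0$-limit scheme of section~\ref{section.curve} looks genuinely difficult.

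The paper circumvents this entirely via the Rees construction. One starts with an irrational rotation $R_0$ and a curve $\Gamma_0$ supplied by proposition~\ref{prop.construction-curve} and its addendum, then picks a point $(\theta_0,x_0)$ whose $R_0$-orbit is disjoint from $\Gamma_0$ (hence from every $R_0^n(\Gamma_0)$). The Rees blow-up of this orbit yields a minimal fibred homeomorphism $R$ semi-conjugate to $R_0$ by a map $\Phi$ for which $\Phi^{-1}(\theta_0,x_0)$ is a non-trivial vertical segment $I$ and all other fibres of $\Phi$ are points. Setting $\Gamma=\Phi^{-1}(\Gamma_0)$ gives a continuous graph for $R$ whose iterates automatically avoid $I$, and the flat-intersection and crossing properties transfer from $(\Gamma_0,R_0)$ to $(\Gamma,R)$. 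Applying proposition~\ref{p.blow-up-curve} to $(R,\Gamma)$ then produces $f$ with $\pi^{-1}(I)\subset K$. The point is that the ``gap in $\Xi$ on one fibre'' is manufactured at the level of $R$ itself, rather than by constraining $\gamma$; this decouples the non-density requirement from the flat-intersection construction, which is exactly the obstacle you identified.
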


\begin{proof}[Proof of item~1]
Choose a minimal qpf circle homeomorphism $R$, and let $\Gamma$ be a
continuous graph constructed by proposition~\ref{prop.construction-curve} and
addendum~\ref{addendum}.  Let $f$ be a qpf circle homeomorphism given by
proposition~\ref{p.blow-up-curve}, which in particular satisfies $\pi\circ
f=R\circ \pi$ and is transitive but not minimal.  Let $\Xi=\bigcup_{n\in
\ZZ}R^n(\Gamma)$. Then we claim that the unique minimal set $K$ for $f$ is the
set $\overline{\pi^{-1}(\Xi^c)}$: indeed, since $R$ is minimal $\pi(K) =
\TT^2$, thus $K$ contains $\pi^{-1}(\Xi^c)$ by (iii) of
proposition~\ref{p.blow-up-curve}.

To construct the first example we suppose that $R$ has no invariant strip and
that for each $\theta$, the set $\Xi_{\theta}$ is dense in the circle: in
particular this is always true if $R$ is a irrational rotation.  It follows
from items (ii) and (iii) of proposition~\ref{p.blow-up-curve} together with
remark~\ref{rem:partition}(b) that the set $\inte(\pi^{-1}(\Xi))$ intersects
each fibre in an open dense set. In particular, $K_{\theta}$ has empty
interior.  Since $\Xi_{\theta}^c$ is uncountable, so is $K_{\theta}$.  Since
$R$ has no invariant strip, neither has $f$. Hence every connected component
of $K$ is included in a fibre (proposition~\ref{prop.jeakel}); consequently
$K$ is totally disconnected.  Since $K$ is a minimal infinite set, it is
perfect, so $K$ is a Cantor set.
\end{proof}

\begin{proof}[Sketch of proof of item~2]
To construct the second example, we start with an irrational rotation $R_{0}$
and choose a curve $\Gamma_{0}$ as provided by
proposition~\ref{prop.construction-curve} and its addendum. We consider a
point $(\theta_{0},x_{0})$ whose orbit by $R_{0}$ is disjoint from
$\Gamma_{0}$. Applying Rees construction in~\cite{rees:1979,becroler:2006}
allows to build a minimal fibered homeomorphism $R$ that is semi-conjugate to
$R_{0}$ by a semi-conjugacy $\Phi$ such that $\Phi^{-1}(\theta,x)$ is a
nontrivial vertical segment if $(\theta,x)$ belongs to the orbit $\cal O$ of
$(\theta_{0},x_{0})$ under $R_{0}$ and a single point otherwise. Since
$\Phi^{-1}(\cal O)$ has empty interior and $R_{0}$ is minimal, $R$ is minimal.
Now we let $\Gamma = \Phi^{-1}(\Gamma_{0})$; $\Gamma$ is a continuous graph
whose iterates are disjoint from the non-trivial vertical segment $I
=\Phi^{-1}(\theta_{0},x_{0})$.  Furthermore, one can check that $R$ and
$\Gamma$ still satisfy the conclusions of
proposition~\ref{prop.construction-curve} and its addendum.  We now apply
proposition~\ref{p.blow-up-curve}. Thus we get a map $f$ whose minimal set $K$
contains the non-trivial vertical segment $\pi^{-1}(I)$.
\end{proof}

\subsection{Ergodic measures} 
\label{ErgodicMeasures}

In this subsection, we briefly want to discuss, in a rather informal way, the
consequences of our construction in sections~\ref{section.curve} and
\ref{Blow-up-curve} for the structure of the invariant measures of the
system. First, we recall an old result by Furstenberg \cite{furstenberg:1961},
which may be seen as a measure-theoretic counterpart to
theorem~\ref{thm:poincare}. In order to state it, we note that $p_1$ maps any
ergodic invariant probability measure $\mu$ of a qpf circle homeomorphism $f$
to the Lebesgue measure on $\kreis$, since this is the only invariant
probability measure for the underlying irrational circle rotation. Consequently,
$\mu$ can be decomposed as
$$
\mu(A) \ = \ \int_{\kreis} \mu_\theta(A_\theta) \ d\theta \ ,
$$ where the probability measures $\mu_\theta$ are the conditional measures of
$\mu$ with respect to the $\sigma$-algebra $p_1^{-1}({\cal B}(\kreis))$.
\begin{thm}[{Furstenberg, \cite[theorem 4.1]{furstenberg:1961}}] \label{t.furstenberg}
  Either $f$ is uniquely ergodic and for almost every $\theta$ the measure
  $\mu_\theta$ is continuous, or there exists $n\in\N$ such that for almost every
  $\theta$ the measure $\mu_\theta$ is the equidistribution on $n$ points.
\end{thm}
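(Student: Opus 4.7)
The plan is to exploit the ergodicity of $\mu$ together with the unique ergodicity of the base rotation $\theta\mapsto\thom$. Write the disintegration $\mu=\int_{\kreis}\mu_\theta\,d\theta$; by $f$-invariance, $(f_\theta)_*\mu_\theta=\mu_{\thom}$ for $\Leb$-a.e.\ $\theta$. Since each $f_\theta$ is a circle homeomorphism, it sends atoms bijectively to atoms, so the two measurable functions
\[
a(\theta):=\mu_\theta(\{x:\mu_\theta(\{x\})>0\}),\qquad m(\theta):=\sup_{x\in\kreis}\mu_\theta(\{x\})
\]
are both invariant under $\theta\mapsto\thom$. Unique ergodicity of the Kronecker base forces them to be a.e.\ equal to constants $a,m\in[0,1]$, producing the clean dichotomy $a=0$ versus $a>0$.

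In the atomic case $a>0$ one has $m>0$, and the finite set $E(\theta):=\{x:\mu_\theta(\{x\})=m\}$ has cardinality at most $1/m$. The $\NN$-valued function $\theta\mapsto |E(\theta)|$ is again invariant under $\theta\mapsto\thom$, hence a.e.\ equal to some integer $n\leq 1/m$. The Borel set $\tilde E:=\{\thx:x\in E(\theta)\}$ is $f$-invariant and carries $\mu$-mass $nm$; ergodicity then forces $\mu(\tilde E)=1$, so $nm=1$ and
\[
\mu_\theta \ = \ \frac{1}{n}\sum_{x\in E(\theta)}\delta_x
\]
for a.e.\ $\theta$, which is exactly the equidistribution on $n$ points.

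The continuous case $a=0$ demands the stronger conclusion that $f$ is uniquely ergodic, and this is the main obstacle. The plan here is to argue by contradiction: suppose $\mu\neq\nu$ are two distinct ergodic invariant probabilities with continuous fibre conditionals. I would first reduce to the case where $\mu_\theta$ and $\nu_\theta$ have full support $\kreis$ for a.e.\ $\theta$ (any fibrewise gap in the support would, by $f$-invariance of $\mathrm{supp}(\mu)$ and minimality of the base, produce an $f$-invariant open set contradicting ergodicity of $\mu$). Then the fibred CDFs $F_\theta(x)=\mu_\theta([0,x])$ and $G_\theta(x)=\nu_\theta([0,x])$ are orientation-preserving circle homeomorphisms depending measurably on $\theta$, and the fibre-respecting map $\Phi(\theta,x)=(\theta,G_\theta^{-1}F_\theta(x))$ pushes $\mu$ to $\nu$.

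The hard step is then to show that $\Phi$ must essentially be a fibred rotation, which combined with equality of base marginals would force $\mu=\nu$ and yield the contradiction. My plan is to form the relatively independent joining $\lambda=\int_{\kreis}\mu_\theta\otimes\nu_\theta\,d\theta$ on $\ntorus\times_{\kreis}\ntorus$: this joining is $f\times f$-invariant, projects to Lebesgue on $\kreis$, and by continuity of the conditionals one expects its ergodic components to be supported on graphs of fibred measurable maps $\Psi$ conjugating $(\mu_\theta)$ to $(\nu_\theta)$. The $f$-equivariance of such a $\Psi$ then yields a fibred cocycle equation over the Kronecker base, whose only solutions, by continuity of the conditionals together with ergodicity of $f$ for $\mu$, are fibred rotations. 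This cohomological step, and the verification that the ergodic components of $\lambda$ are indeed graph-supported, is the main technical difficulty of the argument.
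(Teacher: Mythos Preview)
The paper does not prove this statement; it is quoted from Furstenberg's 1961 paper and used as a black box. So there is no ``paper's own proof'' to compare against, and your attempt has to be judged on its own merits.

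Your treatment of the atomic case is correct and efficient: the functions $a(\theta)$, $m(\theta)$, $|E(\theta)|$ are indeed base-invariant, the ergodicity of $\mu$ forces $\mu(\tilde E)=nm=1$, and the conclusion follows.

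The continuous case, however, has a genuine gap. First, the reduction to full support is not justified: an ergodic measure need not have full topological support, and an $f$-invariant open set in the complement of $\mathrm{supp}(\mu)$ does not contradict ergodicity of $\mu$ (it simply has $\mu$-measure zero). More importantly, the joining scheme you outline does not reach the conclusion. You construct $\Phi=G_\theta^{-1}F_\theta$ and hope to show it is a fibred rotation, but you give no mechanism for this, and indeed it is false in general: $\Phi$ is just some measurable family of circle homeomorphisms intertwining $\mu_\theta$ and $\nu_\theta$, and there is no reason it should be a rotation. The ``cohomological step'' you flag as the main difficulty is in fact the entire content of the theorem in this case, and you have not supplied an argument for it.

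What is missing is Furstenberg's key idea: use the CDF of $\mu$ alone to build a fibrewise monotone surjection $H_\theta(x)=\mu_\theta([0,x])$. Because $(f_\theta)_*\mu_\theta=\mu_{\thom}$, the map $H$ semi-conjugates $f$ to a genuine \emph{skew rotation} $S:(\theta,y)\mapsto(\thom,y+\psi(\theta))$, and $H_*\mu=\Leb$. Ergodicity of $(f,\mu)$ transfers to ergodicity of $(S,\Leb)$, and now one invokes Furstenberg's lemma that an ergodic compact-group extension of a uniquely ergodic base is itself uniquely ergodic. Any other $f$-invariant $\nu$ pushes forward under $H$ to an $S$-invariant measure, hence to $\Leb$; since $H_\theta$ collapses only $\mu_\theta$-null intervals (and $(H_\theta)_*\nu_\theta=\Leb$ forces $\nu_\theta$ to give them zero mass too), one recovers $\nu_\theta=\mu_\theta$. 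This group-extension step is the essential ingredient your proposal lacks.
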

In the second case, one obtains a measurable invariant set $\Phi := \{\thx \mid
\mu_{\theta}\{x\} > 0\}$, which contains exactly $n$ points in every
fibre.%
\footnote{{\em A priori}, this only holds on almost every fibre. However, by
  modifying $\mu_\theta$ on a set of measure zero, one one can always assume
  without loss of generality that it holds on every fibre.}
Since such a set can always be represented as the graph of a measurable
$n$-valued function $\varphi$, one speaks of an {\em invariant
graph}. Conversely, every invariant graph determines an ergodic invariant
measure $\mu_\Phi$, given by
$$
\mu_\Phi(A) \ = \ \int_{\kreis} \frac{\#(A_\theta \cap \Phi_\theta)}{n}\ d\theta \ .
$$ (We remark that the requirement that this measure is ergodic is part of the
definition of an invariant graph.) For a more detailed discussion of these
concepts, see \cite[section 2]{jaeger/keller:2006}. 

The important fact in our context is that the two alternative cases of
theorem~\ref{t.furstenberg} are preserved by topological extension: if $f$ is
semi-conjugate to $R$ via a fibre-respecting semi-conjugacy $\pi$, then $\pi$
projects $f$-invariant graphs to $R$-invariant graphs. Conversely, the
preimage of any $R$-invariant graph under $\pi$ intersects every fibre in
exactly $n$ connected components, and the endpoints of the latter constitute
invariant graphs for the topological extension $f$.
\medskip

In order to describe how our construction affects the invariant measures, we
place ourselves in the situation of proposition~\ref{p.blow-up-curve} and
consider the two cases in theorem~\ref{t.furstenberg}~. 
\smallskip

1) When the original transformation $R$ has a unique invariant measure with
continuous fibre measures, then nothing much happens. The topological
extension $f$ will still have a unique invariant measure with continuous fibre
measures. The only effect is that the new invariant measure does
not have full topological support in \ntorus\ (since $f$ is not
minimal).
\smallskip

The question of what happens with an invariant graph $\Phi$ when passing
from $R$ to $f$ in proposition~\ref{p.blow-up-curve} (and hence in
theorem~\ref{t.mainthm}) mainly depends on the value of
$\mu_\Phi(\Gamma)$. Thus, we have to distinguish two sub-cases.
\smallskip~

2a)  When $\mu_\Phi(\Gamma)=0$, nothing changes either. The
preimage of $\Phi$ under $\pi$ constitutes an invariant graph for the new map
$f$, and the two systems $(R,\mu_\Phi)$ and $(f,\mu_{\pi^{-1}\Phi})$ are
isomorphic in the measure-theoretic sense. 

2b) The more interesting case is the
one where $\mu_\Phi(\Gamma)>0$. In this case, since $\mu_\Phi$ is ergodic and
$\Xi = \bigcup_{n\in\Z} R^n(\Gamma)$ is invariant, we have $\mu_\Phi(\Xi) =
1$. Consequently, the preimage $\pi^{-1}\thx$ of almost every point in $\Phi$
is a vertical segment. The endpoints of these segments constitute two distinct
invariant graphs for $f$, such that the invariant measure $\mu_\Phi$ has been
(at least) doubled. {\em A more difficult question, which we have to leave open
here, is the one whether there exist further invariant graphs in the preimage
of $\Phi$.}
\medskip

Without going into detail, we remark that both conditions $\mu_\Phi(\Gamma)=0$
and $\mu_\Phi(\Gamma) > 0$ can be ensured by adapting the construction of the
curve $\Gamma$ in the proof of proposition~\ref{prop.construction-curve}~. In
the former case, the crucial fact is that for any $\epsilon>0$, it is always
possible to render any segment of a given curve disjoint from $\Phi$ on a set
of fibres of measure arbitrarily close to 1 by an
$\epsilon$-perturbation. Performing an infinite sequence of smaller and
smaller perturbations, this shows that in any arbitrarily small box there
exist continuous curve segments which intersect $\Phi$ only on a set of fibres
of measure zero. Using this fact appropriately in each step of the
construction, namely when the modifications are constructed in the proof of
perturbation lemma \ref{l.perturbation} (compare also
remark~\ref{r.perturbation}), allows to ensure that the limit curve obtained
in proposition~\ref{prop.construction-curve} satisfies $\mu_\Phi(\Gamma) = 0$.

Conversely, in order to ensure $\mu_\Phi(\Gamma)>0$, it suffices to start the
construction in section~\ref{section.curve} with a curve $\Gamma_0$ which
intersects $\Phi$ on a set of positive measure. This is always possible, since
there exists compact sets $K \ssq \kreis$ of measure arbitrarily close to 1
with the property that the restriction of $\varphi$ to $K$ is continuous
(where $\varphi$ is the measurable function $\kreis\ra\kreis$ with graph
$\Phi$ as above). If the modifications in each step of the construction are
then performed only over sets of sufficiently small measure, then the
resulting limit curve $\Gamma$ will still intersect $\Phi$ on a set of
positive measure. It is even possible to ensure this condition for any finite
number of invariant graphs at the same time.

Finally, we want to mention that it is possible to repeat this ``doubling
procedure" as many times as wanted, producing a sequence of topological extensions
$f_1,f_2,\ldots$ of $R$ with more and more invariant graphs in the preimage of
an initial $R$-invariant graph. Furthermore, the ${\cal C}^0$-distance between
both $f_n$ and $f_{n+1}$ and the corresponding semi-conjugacies can be made
arbitrarily small in each step. Then the $f_n$ converge to a limit $f_\infty$,
which is a topological extension of $R$ with an infinite number of invariant
graphs that project down to $\Phi$.

\subsection{The linear case}
Finally, we restrict ourselves to qpf linear circle homeomorphisms.  We
identify the $2$-torus $\TT^2$ with $\kreis \times \PP^1(\RR)$ and consider
the projective action of $\mbox{SL}(2,\RR)$ on $\PP^1(\RR)$.  Then a \emph{qpf
linear circle homeomorphism} is a homeomorphism of the $2$-torus, isotopic to
the identity, of the form $(\theta,x) \mapsto (\theta + \omega,f_{\theta}(x))$
where $f_{\theta} \in \mbox{SL}(2,\RR)$.

\begin{prop} 
\label{p.twopoints}
Let $f$ be a qpf linear circle homeomorphism. Assume $f$ is transitive but not
minimal.  Let $K$ be the unique minimal invariant set for $f$.  Then
$K_{\theta}$ contains at most two points for every $\theta$ in a dense
$G_{\delta}$ subset of the circle. In particular we have $c(K) = 1$ or $2$ in
proposition~\ref{prop.c-of-K}.
\end{prop}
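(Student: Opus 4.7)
The plan is to argue by contradiction, combining proposition~\ref{prop.c-of-K} with the rigidity of projective $\mathrm{SL}(2,\RR)$-actions on $\PP^1(\RR)$. By proposition~\ref{prop.c-of-K}, there is a dense $G_\delta$ set $\Theta\subset\kreis$ on which $c(\theta)=c(K)$. Since $f$ is transitive and $K\subsetneq\TT^2$ is a proper closed invariant set, $K$ has empty interior in $\TT^2$; combined with invariance of $K$ under the irrational base rotation and upper semi-continuity of $\theta\mapsto K_\theta$, this forces $K_\theta$ to have empty interior in $\PP^1(\RR)$ for every $\theta$, so that the connected components of $K_\theta$ are singletons. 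It therefore suffices to prove $c(K)\leq 2$.

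Suppose for contradiction $c(K)\geq 3$. The function $c$ is lower semi-continuous and equals $c(K)$ on the dense set $\Theta$, so in fact $c\equiv c(K)$ on all of $\kreis$, and $\theta\mapsto K_\theta$ is continuous as a map into the space of $c(K)$-element subsets of $\PP^1(\RR)$. One can therefore select three continuous sections $\phi_1,\phi_2,\phi_3:\kreis\to\PP^1(\RR)$ of $K$ lying in cyclic order on each fiber (globally defined up to a cyclic monodromy which will not affect the argument). Since each $f_\theta\in\mathrm{PSL}(2,\RR)$ preserves orientation on $\PP^1(\RR)$, it sends the triple $(\phi_1,\phi_2,\phi_3)(\theta)$ to a cyclic shift of $(\phi_1,\phi_2,\phi_3)(\theta+\omega)$ by some fixed $\kappa\in\ZZ/3\ZZ$. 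Letting $N$ be the order of $\kappa$, the iterate $f^N$ fixes each section individually.

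The rigidity step uses that a Möbius transformation is uniquely determined by its values at three points. Let $h_\theta\in\mathrm{PSL}(2,\RR)$ be the Möbius map sending $(\phi_1,\phi_2,\phi_3)(\theta)$ to $(0,1,\infty)$. Then the conjugate cocycle $h_{\theta+N\omega}\circ f^N_\theta\circ h_\theta^{-1}$ fixes each of $0,1,\infty$, hence equals $\mathrm{id}_{\PP^1(\RR)}$. Thus $f^N_\theta=h_{\theta+N\omega}^{-1}\circ h_\theta$ with $h_\theta$ continuous in $\theta$, realizing $f^N$ as continuously cohomologous to the trivial cocycle. Consequently, for each $y\in\PP^1(\RR)$, the set $\Psi_y:=\{(\theta,h_\theta^{-1}(y)):\theta\in\kreis\}$ is a continuous $f^N$-invariant curve in $\TT^2$, and any $f$-orbit is contained in the finite union $\bigcup_{i=0}^{N-1}f^i(\Psi_y)$. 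This is a closed, one-dimensional, nowhere dense subset of $\TT^2$, contradicting the existence of a dense $f$-orbit.

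The main obstacle is securing the continuity of the sections $\phi_i$ (and hence of $h_\theta$) on all of $\kreis$: it is this continuity that ensures each $\Psi_y$ is a genuine continuous closed curve rather than merely a measurable section potentially dense in $\TT^2$. This is provided by the lower semi-continuity argument forcing $c(\theta)\equiv c(K)$; without it, the invariant sections would only be measurable and the topological non-transitivity conclusion could fail.
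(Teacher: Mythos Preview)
Your argument has a genuine gap at the step ``the function $c$ is lower semi-continuous and equals $c(K)$ on the dense set $\Theta$, so in fact $c\equiv c(K)$ on all of $\kreis$.'' For a compact set $K\subset\TT^2$ the fibre map $\theta\mapsto K_\theta$ is only \emph{upper} semi-continuous, and this does not make the number of components lower semi-continuous: branches can merge in the limit, so one can have $c(\theta_0)>c(K)$ at exceptional fibres. Nothing in the hypotheses excludes fibres where $K_\theta$ has strictly more than $c(K)$ points; the paper's own proof of proposition~\ref{p.bjerkloev/johnson} explicitly deals with this possibility, and it needs the extra structure coming from an invariant strip to rule it out when $n>2$. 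Without $c\equiv c(K)$ you cannot extract the continuous sections $\phi_1,\phi_2,\phi_3$, and the Möbius-rigidity step (which relies on $h_\theta$ being continuous) collapses. Your final paragraph correctly identifies continuity of the sections as the crux, but the justification offered for it is the unsupported lower semi-continuity claim.

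There is also a secondary gap in the first paragraph: upper semi-continuity of $\theta\mapsto K_\theta$ does not force $K_\theta$ to have empty interior for \emph{every} $\theta$. Knowing that $K_{\theta_0+n\omega}$ contains an arc for a dense set of base points does not by itself produce interior in $\TT^2$, since those arcs may shrink or drift. For general qpf circle homeomorphisms this conclusion is simply false (proposition~\ref{p.examples}(2) builds a minimal set containing a vertical segment); linearity would have to be used in an essential way, and you do not invoke it here.

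By contrast, the paper's proof is local rather than global. It fixes a single continuity point $\theta_0$ of $\theta\mapsto K_\theta$ on which transitive points are fibrewise dense, takes a gap $(a,b)$ of $K_{\theta_0}$, and uses returns of a dense orbit together with continuity at $\theta_0$ to show that the return cocycles $f^{n_i}_{\theta_0}$ approximate \emph{every} Möbius map fixing $a$ and $b$. Hence all such maps preserve $K_{\theta_0}$, which forces $K_{\theta_0}\subset\{a,b\}$. No global covering structure of $K$ is needed.
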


In case there is no invariant strip, the preceding result can be improved further. Let us recall that we know no example of a qpf linear circle homeomorphism with no invariant strip which is not minimal (so that the following statement could turn out to be void).

\begin{prop} 
\label{p.sltr-onepoint}
Let $f$ be a qpf linear circle homeomorphism. Assume $f$ is not
minimal and has no invariant strip. Let $K$ be the unique minimal invariant
set for $f$. Then $K_{\theta}$ is a singleton for every $\theta$ in a dense
$G_{\delta}$ subset of the circle. In particular we have $c(K) = 1$ in
proposition~\ref{prop.c-of-K}.
\end{prop}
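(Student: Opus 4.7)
The plan is to argue by contradiction, using Proposition~\ref{p.twopoints} as the starting point: it gives $c(K)\in\{1,2\}$, so I would assume $c(K)=2$ and derive a contradiction with the no-invariant-strip hypothesis. By definition of $c(K)$, there is then a dense $G_\delta$ subset $\Theta\subset\kreis$ on which $K_\theta=\{x_1(\theta),x_2(\theta)\}$ consists of exactly two points. Combining the upper semi-continuity of $\theta\mapsto K_\theta$ in the Hausdorff topology with the fact that $c$ attains its minimum generically, I would first check that on the (still dense $G_\delta$) set $\Theta^*$ of continuity points of $\theta\mapsto K_\theta$ the fibrewise labels $x_1,x_2$ can be chosen continuously, and then extended measurably to $\kreis$.

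Next, I would exploit the $f$-invariance of $K$: on a set of full measure, $f_\theta$ maps $\{x_1(\theta),x_2(\theta)\}$ bijectively onto $\{x_1(\theta+\omega),x_2(\theta+\omega)\}$ and hence acts on the pair as either the identity or the transposition. After replacing $f$ by $f^2$ (or, equivalently, pulling back via a two-to-one cover of $\kreis$), a step which preserves both the $\mbox{SL}(2,\RR)$-structure and the no-invariant-strip hypothesis, I may assume $f_\theta(x_i(\theta))=x_i(\theta+\omega)$ for $i=1,2$. This would provide two genuinely $f$-invariant measurable sections $x_1,x_2$.

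The crucial third step is to upgrade these measurable sections to continuous ones by using the linear structure: $f$ comes from the projective action of an $\mbox{SL}(2,\RR)$-cocycle $A_\theta$, so $x_1,x_2$ lift to a measurable invariant splitting $\RR^2=E_1(\theta)\oplus E_2(\theta)$ of the underlying plane bundle. There are two alternatives. Either the Lyapunov exponents vanish, in which case $A$ is measurably conjugate to a rotation or identity cocycle and any measurable invariant line field is automatically continuous; or the exponents are $\pm\lambda$ with $\lambda>0$, so that $E_1,E_2$ coincide with the Oseledets subbundles. In the second alternative, the fact that the graphs of $x_1,x_2$ are contained in the closed minimal set $K$ with $c(K)=2$ on the residual set $\Theta$ is what rules out the non-uniformly hyperbolic scenario: accumulation points of the Oseledets sections at an exceptional fibre would yield extra points of $K_\theta$ at every point of the dense backward $\omega$-orbit of that fibre, contradicting $c(K)=2$. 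Hence the splitting is dominated, so continuous.

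Once $x_1,x_2$ are continuous, their union is a continuous $(1,2)$-invariant graph (or, before undoing the passage to $f^2$, a continuous $(2,1)$-invariant graph), which is in particular an invariant strip, contradicting the hypothesis of the proposition. The main obstacle is clearly the third step, since for a general qpf circle homeomorphism a minimal set with $c(K)=2$ need not yield continuous sections (indeed, Proposition~\ref{p.example-Cantor} shows worse things can happen); what needs to be extracted is the rigidity of measurable invariant line fields for $\mbox{SL}(2,\RR)$-cocycles in the presence of the closed invariant set $K$, and this is where I would appeal to (or mimic) the techniques used in the classification of Bjerkl\"ov--Johnson cited as Proposition~\ref{p.bjerkloev/johnson}.
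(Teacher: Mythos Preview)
Your overall architecture---reduce via Proposition~\ref{p.twopoints} to $c(K)=2$, extract two invariant measurable sections, and upgrade them to continuous ones to produce an invariant strip---is reasonable in spirit, but the third step contains a genuine gap that does not close as written.

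In the positive-exponent branch you argue that non-uniform hyperbolicity would force extra accumulation points of the Oseledets sections at some exceptional fibre $\theta_0$, and hence extra points of $K$ along the whole backward orbit $\{\theta_0-n\omega\}$, ``contradicting $c(K)=2$''. But $c(K)=2$ only asserts that $K_\theta$ has two points on a dense $G_\delta$; the backward orbit of $\theta_0$ is countable, hence meager, and may lie entirely in the complement of that $G_\delta$. So no contradiction is produced. More concretely, nothing you have said rules out a non-uniformly hyperbolic cocycle whose unique minimal set has exactly two points on generic fibres and three or more on a countable set of exceptional fibres; excluding this is precisely the content of the proposition and requires an independent argument. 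The zero-exponent branch is also unjustified: the claim that a measurable invariant line field for a zero-exponent $\mathrm{SL}(2,\R)$-cocycle is automatically continuous is false without further hypotheses. Finally, your proposed appeal to Proposition~\ref{p.bjerkloev/johnson} is not available, since that statement is proved under the \emph{opposite} hypothesis (existence of an invariant strip).

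The paper's proof follows a completely different route and does not use Oseledets theory at all. It first reduces (via Thieullen's classification and Theorem~\ref{thm:poincare}) to the case where $f$ is $\rho$-unbounded and transitive; this step is absent from your proposal and is where the no-strip hypothesis is really spent. Then, assuming $K_\theta$ has two points on a dense $G_\delta$, the paper works in a lift, uses $\rho$-unboundedness to find an interval $J$ and a time $n$ such that a certain half-integer-valued ``displacement'' function $d_n$ is not constant on $J$, and extracts from a jump point of $d_n$ a single projective map $f^n_{\theta_1}$ that would have to expand two disjoint arcs and contract two others---forcing at least four fixed points for a M\"obius map, which is impossible. The argument is elementary and local; it never constructs a global continuous section.
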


In the case where $f$ has invariant strips, one has the following result:

\begin{prop}[\cite{bjerkloev/johnson:2007}]
\label{p.bjerkloev/johnson}
Let $f$ be a qpf linear circle homeomorphism. Assume $f$ has invariant strips and let $K$ be a minimal invariant set for $f$. Then there are two (non exclusive) possibilities:
\begin{enumerate}
\item  $K$ is a continuous $(p,q)$-invariant graph, or
\item for every $\theta$ in a dense $G_{\delta}$ subset of the circle
the cardinality of the set $K_\theta$ is $1$ or $2$.
In particular we have $c(K) = 1$ or $2$ in proposition~\ref{prop.c-of-K}.
\end{enumerate}
\end{prop}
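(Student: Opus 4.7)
The plan is to invoke the Bjerkl\"ov--Johnson classification \cite{bjerkloev/johnson:2007} by a direct dynamical argument. First I would reduce to a convenient normal form: by Theorem~\ref{thm:poincare}, the existence of an invariant strip forces $\rho(f)$ to be rationally related to $\omega$, so after passing to a suitable finite cover of base and fibre one may assume that $f$ carries a $(1,1)$-invariant strip $S$; that is, $S_\theta=[\alpha(\theta),\beta(\theta)]$ is a single closed interval in each fibre, with $\alpha$ upper semi-continuous, $\beta$ lower semi-continuous, $f_\theta(\alpha(\theta))=\alpha(\theta+\omega)$ and $f_\theta(\beta(\theta))=\beta(\theta+\omega)$. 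The boundary $\partial S$ is then contained in the graphs of $\alpha$ and $\beta$; by semi-continuity, on the dense $G_\delta$ of common continuity points of these two functions, $(\partial S)_\theta$ consists of at most two points.

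Next I would exploit the linearity of the fibre maps. Each restriction $f_\theta|_{\mathrm{int}(S_\theta)}$ is a M\"obius transformation preserving the oriented interval $(\alpha(\theta),\beta(\theta))$ and fixing its endpoints. Conjugating by a logarithmic cross-ratio with respect to $\alpha(\theta)$ and $\beta(\theta)$ (in a local parametrisation of $\kreis$ by $\R$) turns this restriction into a translation $y\mapsto y+c(\theta)$, so that the dynamics on $\mathrm{int}(S)$ is conjugate to a classical cylinder cocycle $(\theta,y)\mapsto(\theta+\omega,y+c(\theta))$ over the irrational rotation. The minimal sets of such cylinder cocycles are well understood: they are either continuous horizontal sections (in the coboundary case) or the whole cylinder.

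Armed with this, I would analyse $K$ by cases according to its position relative to $S$. If $K\cap\mathrm{int}(S)\neq\emptyset$, minimality of $K$ forces $K\subseteq S$; the cylinder-cocycle picture then shows that $K$ either comes from a continuous section, yielding case 1 of the proposition (a continuous $(1,1)$-invariant graph), or $K$ equals the whole of $S$, in which case the proper closed invariant subset $\partial S$ would contradict minimality unless the strip degenerates to a graph, landing again in case 1. The symmetric situation $K\cap S=\emptyset$ is handled by replacing $S$ by the closure of its complement, whose fibres are again intervals carrying a M\"obius action, so the same cylinder argument applies. In the remaining case $K\subseteq\partial S$, the preliminary analysis gives $|K_\theta|\in\{1,2\}$ on a dense $G_\delta$, and Proposition~\ref{prop.c-of-K} then yields $c(K)\in\{1,2\}$.

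The main obstacle I expect is careful control at the discontinuity points of $\alpha$ and $\beta$. Away from them the cross-ratio conjugacy is continuous and the reduction to a cylinder cocycle is clean; but on the meagre bad set the parametrisation degenerates, and one must verify that $K$ does not acquire extra fibre components there beyond those captured by the closure of the upper or lower graph. This is essentially why case 1 of the proposition is formulated in terms of a continuous $(p,q)$-invariant graph (which tolerates vertical segments in place of points) rather than merely a continuous section, and also why the conclusion on $|K_\theta|$ in case 2 is stated generically rather than everywhere.
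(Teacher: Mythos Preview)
Your route via a cylinder-cocycle conjugacy is genuinely different from the paper's, and the obstacle you flag at the end is a real gap rather than a technicality.

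The paper never linearises the dynamics inside a strip. It first shows (quoting \cite{jaeger/keller:2006,stark:2003}) that $K$ lies in a minimal invariant strip $A$ whose fibres $A_\theta$ have exactly $n$ connected components, and that $A_\theta=K_\theta$ consists of exactly $n$ points at every continuity point of $\theta\mapsto K_\theta$. This already gives case~2 when $n\le 2$. For $n\ge 3$ the paper argues directly: if some $K_{\theta_0}$ had more than $n$ points, one component $A^i_{\theta_0}$ would be a non-degenerate interval. Pick a continuity point $\theta_1$ and $n_j$ with $\theta_0+n_j\omega\to\theta_1$; then $f^{n_j}_{\theta_0}$ sends each $A^i_{\theta_0}$ towards the single point $x_i\in K_{\theta_1}$. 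Choosing one point $y_i$ in each of three distinct components, the triples $(f^{n_j}_{\theta_0}(y_1),f^{n_j}_{\theta_0}(y_2),f^{n_j}_{\theta_0}(y_3))$ converge to $(x_1,x_2,x_3)$, and simple $3$-transitivity of the projective action forces the maps $f^{n_j}_{\theta_0}$ themselves to converge in $\mathrm{SL}(2,\R)$ to some M\"obius $g$. But then $g$ would collapse the non-degenerate interval $A^i_{\theta_0}$ to a point, which is impossible. Hence every fibre of $K$ has exactly $n$ points and $K$ is a continuous $n$-valued graph. The whole argument uses only $3$-transitivity and needs no model system.

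Your cross-ratio conjugacy, by contrast, only yields a \emph{topological} conjugacy to a cylinder cocycle when $\alpha,\beta$ are continuous and the strip is nowhere degenerate. But that hypothesis already says $f$ has two continuous invariant curves---precisely the conclusion of case~1---and it fails in the situations of interest. When $\alpha,\beta$ are merely semi-continuous the coordinate change $\phi_\theta$ blows up at discontinuity fibres, the image of a compact $K\subset S$ need not be bounded in $\kreis\times\R$, and the Gottschalk--Hedlund dichotomy you rely on (continuous section versus no compact invariant set) no longer transfers back to $S$. Your case ``$K$ equals the whole of $S$'' is also not excluded by saying $\partial S$ is a proper closed invariant subset, since for a minimal strip one typically has $S=\overline{\partial S}$. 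Finally, a small correction: a continuous $(p,q)$-invariant graph in this paper is literally a $pq$-sheeted covering of $\kreis$ and does not ``tolerate vertical segments''; you are conflating it with an invariant strip.
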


This result is in \cite{bjerkloev/johnson:2007} formulated for continuous time systems,
but the translation to the discrete time case and vice versa is plain sailing.
We provide a new proof of it. The authors in \cite{bjerkloev/johnson:2007} also show that any minimal set belongs to one of five different cases, but for one of the possibilities
(which they call ``Denjoy extension'') they leave open whether it can be realised or not.
The proposition~\ref{p.twopoints} above excludes the existence of such Denjoy extensions.

Proposition~\ref{prop:cocycles} now follows from propositions~\ref{p.sltr-onepoint} and~\ref{p.bjerkloev/johnson}:  given a strict minimal set $K$ for a qpf linear circle homeomorphism $f$, if $f$ has no invariant strip then for a generic $\theta$ the cardinality of $K_{\theta}$ is $1$ (proposition~\ref{p.sltr-onepoint});
if $f$ has invariant strips, then the cardinality of $K_{\theta}$ is generically $1$ or $2$, or $K$ is a $(p,q)$-invariant graph (proposition~\ref{p.bjerkloev/johnson}). Note that  if the cardinality of $K_{\theta}$ is generically greater than $2$ then we are in the last case with $pq >2$, and then it is easy to see that $f$ is actually conjugate to a rotation.

\medskip

\begin{proof}[Proof of proposition~\ref{p.twopoints}]
Let $f$ and $K$ be as in the proposition.  Let $\Theta_{1}$ be the set of
 $\theta_{0} \in \kreis$ such that $\theta_{0}$ is a continuity point of
 $\theta \mapsto K_{\theta}$, and such that for a dense set of $x_{0} \in
 \kreis$ the positive orbit of $(\theta_{0},x_{0})$ is dense in $\TT^2$. Given the transitivity of $f$,
 the set of points whose positive orbit is dense is a dense $G_{\delta}$
 subset of the $2$-torus.  Thus $\Theta_{1}$ contains a dense $G_{\delta }$
 subset of the circle.%
\footnote{Recall that for any dense $G_{\delta }$ subset $E$ of $\TT^2$, there
exists a dense $G_{\delta }$ subset of points $\theta \in \TT^1$ for which
$E_{\theta}$ is a dense $G_{\delta }$ subset in $\kreis$.}

We now suppose that there exists some $\theta_{0} \in \Theta_{1}$ such that $K_{\theta_{0}}$ is not
a single point.  We choose a connected component $(a,b)$ of $\kreis \setminus
K_{\theta_{0}}$; note that $a \neq b$.
 \begin{claim}
Every map $A \in \mathrm{SL}(2,\RR)$ that fixes $a$ and $b$ globally fixes $K_{\theta_{0}}$.
\end{claim}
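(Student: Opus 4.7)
The plan is to approximate $A$ by a sequence of fibre maps $f^{n_k}_{\theta_{0}}$ corresponding to iterates of $f$ that return close to the fibre over $\theta_{0}$, and then use continuity of $\theta \mapsto K_{\theta}$ at $\theta_{0}$ to pass to the limit. Since $\theta_{0} \in \Theta_{1}$, the set of $x_{0} \in \kreis$ for which $(\theta_{0},x_{0})$ has dense forward orbit in $\TT^2$ is dense in $\kreis$; I choose such an $x_{0}$ inside the open gap $(a,b)$, so that $A(x_{0}) \in (a,b)$ and in particular $A(x_{0}) \notin \{a,b\}$. By the density of the orbit of $(\theta_{0},x_{0})$ in $\TT^2$, there exists a sequence $n_{k} \to \infty$ with $\theta_{0} + n_{k}\omega \to \theta_{0}$ and $f^{n_{k}}_{\theta_{0}}(x_{0}) \to A(x_{0})$. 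Since $a,b \in K_{\theta_{0}}$ and $K_{\theta}$ is continuous at $\theta_{0}$, extracting a subsequence yields $f^{n_{k}}_{\theta_{0}}(a) \to a^{*}$ and $f^{n_{k}}_{\theta_{0}}(b) \to b^{*}$ for some $a^{*}, b^{*} \in K_{\theta_{0}}$.

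The heart of the argument is to show $a^{*} = a$ and $b^{*} = b$. Observe that $G_{k} := f^{n_{k}}_{\theta_{0}}((a,b))$ is a connected component of $\kreis \setminus K_{\theta_{0}+n_{k}\omega}$, with endpoints $f^{n_{k}}_{\theta_{0}}(a)$ and $f^{n_{k}}_{\theta_{0}}(b)$, and contains $f^{n_{k}}_{\theta_{0}}(x_{0})$. If $a^{*} = b^{*}$, then $G_{k}$ either shrinks to the single point $a^{*}$ (which would force $A(x_{0}) = a^{*}$, contradicting $A(x_{0}) \in (a,b) \subseteq \kreis \setminus K_{\theta_{0}}$ while $a^{*} \in K_{\theta_{0}}$) or expands to $\kreis \setminus \{a^{*}\}$ (which would confine $K_{\theta_{0}+n_{k}\omega}$, and hence in the limit $K_{\theta_{0}}$, to a single point, contradicting $a \neq b$). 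Thus $a^{*} \neq b^{*}$, and $G_{k}$ converges to a non-degenerate arc $(a^{*},b^{*})$ disjoint from $K_{\theta_{0}}$ and containing $A(x_{0})$. Since $(a,b)$ is the unique connected component of $\kreis \setminus K_{\theta_{0}}$ containing $A(x_{0})$, we must have $(a^{*},b^{*}) \subseteq (a,b)$, forcing $a^{*}, b^{*} \in \{a,b\}$. Cyclic-order preservation by the orientation-preserving maps $f^{n_{k}}_{\theta_{0}}$---the triple $(a, x_{0}, b)$ maps to a triple with the same cyclic order, whose limit is $(a^{*}, A(x_{0}), b^{*})$---then rules out the swapped configuration $a^{*} = b, b^{*} = a$, and we conclude $a^{*} = a$, $b^{*} = b$.

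To finish, $f^{n_{k}}_{\theta_{0}}$ is a sequence of orientation-preserving projective maps whose values at the three distinct points $a, b, x_{0}$ converge to the three distinct points $a, b, A(x_{0})$ with the same cyclic order. Since the action of $\mathrm{PSL}(2,\RR)$ on cyclically ordered triples of distinct points of $\PP^1(\RR)$ is simply transitive and bi-continuous, this forces $f^{n_{k}}_{\theta_{0}} \to A$ uniformly on $\kreis$. Therefore
$$A(K_{\theta_{0}}) \ = \ \lim_{k} f^{n_{k}}_{\theta_{0}}(K_{\theta_{0}}) \ = \ \lim_{k} K_{\theta_{0}+n_{k}\omega} \ = \ K_{\theta_{0}},$$
the last equality by continuity of $K_{\theta}$ at $\theta_{0}$. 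The main obstacle is the middle paragraph: ruling out the degenerate limits $a^{*} = b^{*}$ requires combining the gap structure of $K_{\theta_{0}}$ (in particular $a \neq b$), the choice $x_{0} \in (a,b)$, and the orientation-preserving nature of the fibre maps, and it is where all the hypotheses interact non-trivially; everything else is a rather mechanical application of transitivity and continuity.
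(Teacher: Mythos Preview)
Your proof is correct and follows essentially the same route as the paper's: pick $x_0\in(a,b)$ with dense forward orbit, choose iterates $n_k$ with $f^{n_k}(\theta_0,x_0)\to(\theta_0,A(x_0))$, show that $f^{n_k}_{\theta_0}(a)\to a$ and $f^{n_k}_{\theta_0}(b)\to b$, and then use that a projective map is determined by its action on three points to conclude $f^{n_k}_{\theta_0}\to A$, hence $A(K_{\theta_0})=K_{\theta_0}$. The only difference is that the paper compresses your entire middle paragraph into a single sentence: from the Hausdorff convergence $K_{\theta_0+n_k\omega}\to K_{\theta_0}$ and $f^{n_k}_{\theta_0}(x_0)\to A(x_0)\in(a,b)$ it asserts directly that the endpoints of the image gap converge to $a$ and $b$; your case analysis ruling out $a^*=b^*$ and the swapped configuration is a careful unpacking of exactly this implication.
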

 In order to prove the claim, we introduce the maps $f^n_{\theta} \in
\mbox{SL}(2,\RR)$ defined by $f^n(\theta,x) = (\theta+n\omega,
f^n_{\theta}(x))$.  By definition of $\Theta_{1}$ there exists a point $z_{0}
= (\theta_0,x_0) \in \{\theta_{0}\} \times (a,b)$ with a dense positive orbit.
Let $z = (\theta_0,A(x_{0}))$, and consider an increasing sequence $(n_{i})$
such that $f^{n_{i}}(z_{0})$ converges to $z$.  By definition of $\Theta_{1}$,
the sequence $f^{n_{i}}_{\theta_{0}}(K_{\theta_{0}})$ converges to
$K_{\theta_{0}}$.  This implies that $f^{n_{i}}_{\theta_{0}}(a)$ converges to
$a$ and $f^{n_{i}}_{\theta_{0}}(b)$ converges to $b$.  The map $g \mapsto
(g(a),g(z_{0}),g(b))$ is a homeomorphism between $\mbox{SL}(2,\RR)$ and the
space of cyclically ordered triples.  Thus the sequence
$f^{n_{i}}_{\theta_{0}}$ converges to $A$, and
$A(K_{\theta_{0}})=K_{\theta_{0}}$, and the claim is proved.
 
Since $\mathrm{SL}(2,\RR)$ acts transitively on positively ordered triplets,
 this claim implies that $K_{\theta_{0}}$ either contains  $[b,a]$ or is disjoint from  $(b,a)$. In the first
 case $K_{\theta_{0}}$ would contain a point with dense positive orbit and $K$
 would equal $\TT^2$, a contradiction. Thus we have proved that for every
 $\theta_{0}\in \Theta_{1}$ the set $K_{\theta_{0}}$ contains either one or
 two points.
\end{proof}

\proof[Proof of proposition~\ref{p.sltr-onepoint}] First of all, suppose that
$f$ is semi-conjugate to an irrational rotation. Then it follows from the
classification for the dynamics of qpf linear circle homeomorphisms in
\cite{thieullen:1997} that the semi-conjugacy has linear fibre maps (i.e.\
$\pi_\theta \in \mbox{SL}(2,\R) \ \forall \theta \in\kreis$), which further
implies that the unique $f$-invariant measure is equivalent to the Lebesgue
measure. Hence $f$ is minimal.  Due to theorem~\ref{thm:poincare}, this means
that we can assume, without loss of generality, that $f$ is $\rho$-unbounded
and transitive.

 We argue by contradiction.  Applying proposition~\ref{p.twopoints}, we see
that $K_{\theta}$ contains exactly two points for a dense $G_{\delta }$ subset
$\Theta_{2}$ of the circle.  We can also assume that every point in
$\Theta_{2}$ is a continuity point of $\theta \mapsto K_{\theta}$.
 
 We fix some $\theta_{0} \in \Theta_{2}$. Up to a linear fibered conjugacy, we
 can assume that $K_{\theta_{0}} = \{1/8,5/8\}$. By continuity we can choose
 an open interval $I$ containing $\theta_{0}$ such that for each $\theta \in
 I$, $K_{\theta}$ does not meet $[1/4,1/2]$ neither $[3/4,1]$, and meets both
 $(0,1/4)$ and $(1/2,3/4)$.
 
We choose a lift $F$ of $f$ to $\kreis \times \RR$, and consider the lift
 $\tilde K$ of $K$.  By the choice of $I$, for every $z \in \tilde K \cap (I
 \times \RR)$ there exists $\tau(z) \in \frac{1}{2}\ZZ$ such that $z \in I
 \times (\tau(z),\tau(z)+1/4)$.  Now let $\theta,n$ be such that $\theta$ and
 $\theta+n\omega$ belong to $I \cap \Theta_{2}$.  The definitions of $I$ and
 $\Theta_{2}$ entails that, for a fixed value of $\theta$, the number
 $\tau(F^n(z))-\tau(z)$ is constant on $\tilde K_{\theta}$.  We denote this
 number by $d_{n}(\theta)$.

 \begin{claim}
There exist an interval $J \subset I$ and an integer $n$ such that $J+n\omega \subset I$
and the function $d_{n}$ is not constant on $J \cap \Theta_{2}$.
\end{claim}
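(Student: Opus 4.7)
The plan is to argue by contradiction. Suppose that for every integer $n$ and every interval $J\subset I$ with $J+n\omega\subset I$, the function $d_n$ is constant on $J\cap\Theta_2$. Since $\Theta_2$ is dense in $I\cap(I-n\omega)$ and $d_n$ takes values in $\frac{1}{2}\ZZ$, this forces $d_n$ to be constant on each connected component of $I\cap (I-n\omega)\cap \Theta_2$. Because $I$ and $I-n\omega$ are intervals in $\kreis$, their intersection has at most two connected components, so $d_n$ takes at most two values on $I\cap\Theta_2\cap(I-n\omega)$. The goal is to show that this rigidity forces the deviations $D_n$ to be globally bounded, contradicting the $\rho$-unboundedness of $f$.

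The local constancy of $d_n$ translates directly into a deviation estimate on the lift $\tilde K$: for every $z=(\theta,x)\in\tilde K$ with $\theta,\theta+n\omega\in I\cap\Theta_2$, the definition of $\tau$ gives $|F^n_\theta(x)-x-d_n(\theta)|<1/4$, whence $D_n(\theta,x)=d_n(\theta)-n\rho(F)+O(1/4)$. I would then exploit the cocycle identity $d_{n+m}(\theta)=d_n(\theta)+d_m(\theta+n\omega)$, together with the fact that, by Weyl equidistribution, one can find many pairs of returns $\theta+n_i\omega,\theta+n_j\omega$ lying in an arbitrarily small common sub-interval of $I$; on such pairs, local constancy forces $d_{n_j-n_i}(\theta+n_i\omega)$ to depend only on the index-difference $n_j-n_i$ and on the component. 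Telescoping along such returns, this should show that $n\mapsto d_n(\theta)-n\rho(F)$ stays bounded uniformly in $\theta\in I\cap\Theta_2$ and in the allowed $n$, so that $D_n$ is bounded on $\tilde K$ above $I$.

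Finally, I would use the linear structure to propagate this bound to the whole torus. The matrix $F^n_\theta\in\mathrm{SL}(2,\RR)$ depends only on $\theta$, and we control its approximate action on the two points of $K_\theta$, which are in general position in $\PP^1(\RR)$; since a M\"obius transformation is determined up to a one-parameter family by its action on two points, and since the returns of generic $\theta$ to $I$ are syndetic in $\ZZ$, one extends the bound to a uniform bound on the deviations of $F^n_\theta$ on all of $\PP^1(\RR)$ and for all $n$. This would contradict the $\rho$-unboundedness of $f$. The main obstacle is precisely this last step, where one must turn the bounded displacement of two marked points of $K_\theta$ into a genuine global bound on the M\"obius transformation $F^n_\theta$: this is where the $\mathrm{SL}(2,\RR)$-geometry enters crucially, and where care is needed to rule out an unbounded one-parameter stabilizer that could still produce unbounded deviations away from $K$.
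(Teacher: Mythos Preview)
Your global strategy---assume $d_n$ is locally constant for every $n$ and every sub-interval, and derive $\rho$-boundedness---is viable, but it is far more elaborate than what the claim requires, and your write-up leaves real work undone at the two places you yourself flag. The paper's argument is entirely local and takes only a few lines: fix any compact $J\subset I$, choose $n_0$ so that every translate of $J$ returns to $I$ within $n_0$ steps, and use $\rho$-unboundedness to find $N$ such that the vertical diameter of $F^{N+m}(J\times\{0\})$ exceeds $1$ for all $m\in\{0,\ldots,n_0\}$; one of these $m$ gives $J+(N+m)\omega\subset I$. If $d_{N+m}$ were constant on $J\cap\Theta_2$, then every point $(\theta,0)$ with $\theta\in J$ would be trapped between two consecutive points of $\tilde K_\theta$ whose $F^{N+m}$-images lie in a vertical window of width $3/4$, forcing the vertical diameter of $F^{N+m}(J\times\{0\})$ below $3/4$---a contradiction. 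No telescoping, no cocycle bookkeeping, and no $\mathrm{SL}(2,\R)$ geometry are used.

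Regarding your two obstacles: the telescoping step can be completed, but it needs the bounded-discrepancy property of the Sturmian sequence of first-return gaps to $I$ (so that the partial sums $\sum_i(d_{m_{(i)}}-m_{(i)}\rho(F))$ stay bounded once their ergodic average is shown to vanish); this is not the one-line ``telescoping'' you suggest. More importantly, the $\mathrm{SL}(2,\R)$ step you call the main obstacle is unnecessary: once $D_n$ is bounded on $\tilde K$ above $I$ for return times $n$, the order-preserving property of $F^n_\theta$ sandwiches \emph{every} point $(\theta,x)$ with $\theta\in I$ between two consecutive $\tilde K$-points at vertical distance $<1$, so $D_n(\theta,x)$ is bounded there too; syndetic returns to $I$ then globalise the bound. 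So your worry about an ``unbounded one-parameter stabilizer'' is a red herring---the linear structure plays no role in this claim, and the paper's direct argument sidesteps both difficulties at once.
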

 Let us prove the claim.  Let $J$ be any compact non-trivial interval inside
 $I$. There exists $n_{0}>0$ such that every interval $J'$ with the same
 length as $J$ has an iterate $J'+m\omega$ inside $I$ with $0 \leq m \leq
 n_{0}$.  Since $f$ is not $\rho$-bounded, the vertical diameter of the
 sequence of iterates $F^n(J \times \{0\})$ is not bounded. Thus there exists
 $N>0$ such that for every $m = 0, \dots, n_{0}$ the vertical diameter of
 $F^{N+m}(J \times \{0\})$ is greater than $1$.  By the choice of $n_{0}$ we
 can choose such an $m$ with $J+(N+m)\omega \subset I$.  Let $n=N+m$, then
 $d_{n}$ cannot be constant on $J \cap \Theta_{2}$ since this would imply that
 the vertical diameter of $F^{n}(J \times \{0\})$ is smaller than $3/4$. This
 proves the claim.

For every half integer $k$ let us consider the set $J_{k} =
\overline{d_{n}^{-1}\{k\}}$.  The interval $J$ is contained in the union of finitely
many $J_{k}$'s, thus the previous claim implies that there exist $k\neq k'$
and $\theta_{1} \in J \cap J_{k} \cap J_{k'}$.  As $\theta_{1}$ is a limit
point of $d_{n}^{-1}(k)$, there exist $x_{1} \in (0,1/4)$ and $x_{2} \in
(1/2,3/4)$ such that $F^n_{\theta_{1}}(x_{1}) \in (k,k+1/4)$ and
$F^n_{\theta_{1}}(x_{2}) \in (k+1/2,k+3/4)$.  Similarly there exist $x'_{1}
\in (0,1/4)$ and $x'_{2} \in (1/2,3/4)$ such that $F^n_{\theta_{1}}(x'_{1})
\in (k',k'+1/4)$ and $F^n_{\theta_{1}}(x'_{2}) \in (k'+1/2,k'+3/4)$.  To fix
ideas we assume that $x_{1} < x'_{1}$. Since $F^n_{\theta_{1}}$ is
order-preserving this implies that $k' > k$, and thus we also have $x_{2} <
x'_{2}$.  Since $x'_{1} < x_{2}$ we see that actually $k'=k+1/2$. Thus we have
$$
F^n_{\theta_{1}}((x_{1} x'_{1})) \supset \left[k+\frac{1}{4},k+\frac{1}{2}\right] \mbox{ and } 
F^n_{\theta_{1}}((x_{2} x'_{2})) \supset \left[k+\frac{3}{4},k+1\right].
$$ We consider the projective circle homeomorphism $\varphi:x \mapsto
f^n_{\theta_{1}}(x) - 1/4 -k$.  Both intervals $[0,1/4]$ and $[1/2,3/4]$ are
contained in the interior of their image by $\varphi$, and both intervals
$[1/4,1/2]$ and $[3/4,1]$ are contained in the interior of their pre-image:
thus $\varphi$ is not the identity but has a fixed point inside the interior
of each of these four intervals. This is absurd since $\varphi$ is a
projective map.
\qed

\proof[Proof of proposition~\ref{p.bjerkloev/johnson}]
Let $K$ be a minimal set. The following properties are well known.
\begin{claim}
There exist a compact invariant set $A \ssq \ntorus$ and $n\geq 1$ such that
\begin{itemize}
  \item $K \ssq A$;
  \item for all $\theta \in \kreis$ the set $A_\theta$ consists of exactly $n$ connected components;
  \item $K_\theta$ coincides with $A_\theta$ and contains exactly $n$ points
  whenever $\theta$ is a continuity point of the mapping $\theta \mapsto K_\theta$.
\end{itemize}
In particular, for all $\theta$ in a dense $G_\delta$ subset of $\kreis$,
the cardinality of $K_\theta$ is equal to $n$.
\end{claim}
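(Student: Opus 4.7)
The plan is to use the $(p,q)$-invariant strip structure provided by the hypothesis; the case $K=\ntorus$ is trivial ($n=1$, $A=\ntorus$), so assume $K$ is a strict subset. First I would produce a $(p,q)$-invariant strip $B$ containing $K$. Since the presence of an invariant strip forces $f$ to be $\rho$-bounded, the minimal set $K$ has bounded vertical diameter in any lift; taking the fibrewise convex hull of a lift of $K$ (or suitably enlarging an existing invariant strip) yields a $(p,q)$-invariant strip $B$ with $K\ssq B$, whose fibres $B_\theta$ consist of $pq$ compact (possibly degenerate) vertical intervals varying continuously in $\theta$.

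Next I would define
$$A:=\bigcup_{\theta\in\kreis}\{\theta\}\times A_\theta,\qquad A_\theta:=\bigcup\bigl\{C: C\text{ is a component of }B_\theta\text{ and }C\cap K\neq\emptyset\bigr\},$$
which is compact (by upper semicontinuity of $\theta\mapsto K_\theta$ and continuity of the components of $B$), is $f$-invariant (since $f$ permutes components of $B$ and preserves $K$), and contains $K$. Let $n(\theta)$ denote the number of components of $A_\theta$. For each index $j\in\{1,\dots,pq\}$ the set $\{\theta\in\kreis : K\text{ meets the }j\text{-th component of }B_\theta\}$ is closed, hence $\{\theta : n(\theta)\geq k\}$ is closed for every $k$; i.e.\ $n$ is upper semicontinuous. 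Being integer-valued and invariant under the irrational rotation $\theta\mapsto\theta+\omega$, each sublevel set $\{n\leq k\}$ is an open invariant subset of $\kreis$ and is therefore empty or all of $\kreis$. Consequently $n$ is constant, equal to some integer $n\geq 1$.

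It remains to verify that at a continuity point $\theta_0$ of $\theta\mapsto K_\theta$ one has $A_{\theta_0}=K_{\theta_0}$ with $|K_{\theta_0}|=n$. Continuity of $K$ together with constancy of $n$ already forces $K_{\theta_0}$ to meet each of the $n$ components of $A_{\theta_0}$. To see that every such component $C$ reduces to a single point of $K$, suppose for contradiction that $C\cap K_{\theta_0}$ contains two distinct points $x_1<x_2$. Since the fibre maps $f_\theta\in\mathrm{SL}(2,\RR)$ act projectively on $\kreis$ and this action is transitive on ordered triples, an argument parallel to the proof of proposition~\ref{p.twopoints} --- using the minimality of $K$ and the existence in $\{\theta_0\}\times\kreis$ of points whose forward orbit is dense in $\ntorus$ --- would then show that the entire arc from $x_1$ to $x_2$ inside $C$ belongs to $K_{\theta_0}$, whence by minimality $K=\ntorus$, contradicting the assumption that $K$ is a strict subset. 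Hence at continuity fibres each component of $A_{\theta_0}$ reduces to a single point of $K_{\theta_0}$ and $|K_{\theta_0}|=n$.

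The main obstacle is this last step: without the linear (projective) structure, the conclusion would fail, as illustrated by proposition~\ref{p.examples}(2) where the minimal set contains a non-trivial vertical segment. The rigidity of the $\mathrm{SL}(2,\RR)$-action on triples is therefore essential, and the argument closely mirrors the one already employed in the proof of proposition~\ref{p.twopoints}.
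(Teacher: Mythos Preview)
Your argument has a genuine gap in the last step. To mimic the proof of proposition~\ref{p.twopoints} you invoke ``the existence in $\{\theta_0\}\times\kreis$ of points whose forward orbit is dense in $\ntorus$'', but proposition~\ref{p.bjerkloev/johnson} does \emph{not} assume that $f$ is transitive --- only that $f$ has invariant strips. In fact when an invariant strip is present $f$ is typically not transitive (the strip and its complement are both invariant), so there is no reason for dense orbits to exist and the projective-triples argument cannot get started. Your remark that ``without the linear (projective) structure, the conclusion would fail'' is also misleading: the claim is a general fact about minimal sets inside invariant strips and requires no $\mathrm{SL}(2,\R)$ structure at all.

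The paper's proof proceeds quite differently and avoids this issue. Using the rotation number relation forced by the presence of an invariant strip, one passes to an iterate, lifts to a finite cover, and conjugates so that $\rho(F)=0$ for a lift $F:\kreis\times\R\to\kreis\times\R$. Let $\tilde K$ be a minimal lift of $K$; since deviations are bounded, $\tilde K$ is compact. The set $\tilde A_\theta:=[\inf\tilde K_\theta,\sup\tilde K_\theta]$ is then the fibrewise convex hull, and the key input is a result of Stark~\cite{stark:2003} (for general qpf monotone maps with zero rotation number) which says that $\tilde A_\theta$ reduces to a single point whenever $\theta$ is a continuity point of $\theta\mapsto\tilde K_\theta$. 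Projecting back to the torus and undoing the cover/conjugacy yields the set $A$ with the stated properties. The $\mathrm{SL}(2,\R)$ structure enters only \emph{after} the claim, when one argues that $n\leq 2$ unless $K$ is a continuous $(p,q)$-invariant graph.
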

Briefly spoken, this is due to the general fact that if $f$ has invariant strips,
then every minimal set is contained in an invariant strip, which
can furthermore be chosen minimal with respect to the inclusion amongst all
invariant strips. Such a minimal invariant strip automatically has the
required properties of the set $A$ above. 
In order to give precise references, we argue as follows. Recall that, by
assumption, $f$ has an invariant strip. It follows from \cite[Lemma 3.9]{jaeger/keller:2006} that
\[
 \rho(f) \ = \ \frac{k}{q}\omega + \frac{l}{p} \ \bmod 1
\]
for suitable integers $k,q,l,p \in \Z,\ q,p\neq 0$ (which are further
specified in \cite{jaeger/keller:2006}). If we go over to a suitable iterate,
consider a lift of $f$ to the $q$-fold cover $(\R / q\Z) \times \kreis$ and
perform a conjugacy of the form $\thx \mapsto (\theta,x-m\theta)$ with
suitable $m\in\Z$, then we can assume that $\rho(f) = 0$. Therefore, we can
choose a lift $F : \kreis \times \R \to \kreis \times \R$ with $\rho(F) =
0$. Let $\tilde K$ be the $\omega$-limit set of any point $\thx \in \kreis\times \RR$ which
lifts a point of $K$. Since $F$-orbits are bounded due to the bounded deviations,
$\tilde K$ is a minimal $F$-invariant set, which projects down to $K$. It
follows from the results in \cite{stark:2003}, for example corollary 4.4, that
the set
\[
\tilde A \ :=  \ \{ \thx \mid \inf \tilde K_\theta \leq x \leq \sup \tilde 
K_\theta \}
\]
is reduced to a point (and in particular coincides with $\tilde K$)
on all fibres which are continuity points of $\theta \mapsto \tilde K_\theta$.
The facts that $\tilde K \ssq \tilde A$ and that for
all $\theta \in \kreis $ the set $\tilde A_\theta$ consists of exactly one
connected component follows directly from the definition. By projecting
$\tilde A$ to the torus and redoing the transformations described above, we
obtain the required set $A$. This proves the claim.
\medskip

Now suppose $n>2$. We have to show that $K$ is is the graph of a continuous
$n$-valued curve in this case. If the cardinality of $K_\theta$ equals $n$ for any $\theta \in
\kreis$, then this follows easily. Hence, suppose for a contradiction that
there exists some $\theta_0\in\kreis$ with $\# K_{\theta_0} \geq
n+1$. Further, fix some $\theta_1\in \kreis$ which is a continuity point of
$\theta \mapsto K_\theta$. Note that this implies $\# K_{\theta_1} = c(K) =
n$, and let $K_{\theta_1} = \{ x_1 \ld x_n\}$.  Let $A \supseteq K$ be as
above and suppose $A_{\theta_0}^1 \ld A_{\theta_0}^n$ are the $n$ connected
components of $A_{\theta_0}$. Let $n_j$ be an increasing sequence of integers, such that
$\theta_0+n_j\omega \to \theta_1$. By going over to a subsequence and
relabelling if necessary, we can assume that each of the sequences
$f^{n_j}_{\theta_0}(A^i_{\theta_0})$ converges to the point $x_i$. (Note that
the continuity of $\theta \mapsto K_\theta$ in $\theta_1$ implies that the
limit points of different connected components of $A_{\theta_0}$ are
distinct.)
Let us choose arbitrary points $y_1\in A^1_{\theta_0}$, $y_2\in A^2_{\theta_0}$, $y_3\in A^3_{\theta_0}$.
As all $f_{\theta_0}^{n_j}$ are the projective actions of $\mathrm{SL}(2,\R)$-matrices,
this implies that the $f^{n_j}_{\theta_0}$ themselves converge to the linear circle homeomorphism
$g$ which maps $(y_1,y_2,y_3)$ on $(x_1,x_2,x_3)$.
However, this leads to a contradiction: at least one of the
intervals $A^i_{\theta_0}$ must be non-degenerate (since $K_{\theta_0} \ssq
A_{\theta_0}$ and $\#K_{\theta_0} \geq n+1$), but in the limit it is
contracted by $g$ to a single point $x_i$.
\qed

\end{document}